\newcommand{\beq}{\begin{equation}}
\newcommand{\eeq}{\end{equation}}
\numberwithin{equation}{section}
\newtheorem{theorem}{Theorem}[section]
\newtheorem{definition}[theorem]{Definition}
\newtheorem{lemma}[theorem]{Lemma}
\newtheorem{proposition}[theorem]{Proposition}
\newtheorem{remark}[theorem]{Remark}
\newcommand{\R}[1]{\mathbb{R}^{#1}}
\newcommand{\C}[1]{\mathbb{C}^{#1}}
\newcommand{\1}{\vmathbb{1}}
\newcommand{\eps}{\varepsilon}
\newcommand{\abs}[1]{\left\vert{#1}\right\vert}
\begin{document}
\title{Refined Asymptotics for Landau-de Gennes Minimizers on Planar Domains}

\author{Dmitry Golovaty}
\address{Department of Mathematics, The University of Akron, Akron, OH 44325, USA}
\email{dmitry@uakron.edu}

\author{Jose Alberto Montero}
\email{j.alberto.montero.z@gmail.com}

\begin{abstract}
In our previous work \cite{GM}, we studied asymptotic behavior of minimizers of the Landau-de Gennes energy functional on planar domains as the nematic correlation length converges to zero. Here we improve upon those results, in particular by sharpening the description of the limiting map of the minimizers. We also provide an expression for the energy valid for a small, but fixed value of the nematic correlation length.  
\end{abstract}

\maketitle

In this paper we revisit some of the conclusions we obtained in \cite{GM}. In that paper we considered the Landau-de Gennes energy functional, which can be expressed as
\begin{equation}\label{LdGEnergy}
E_\eps(u) = \int_\Omega \left ( \frac{\abs{\nabla u}^2}{2} + \frac{W_\beta(u)}{\eps^2}\right).
\end{equation}
Here $\Omega \subset \R{2}$ is a bounded, smooth, simply-connected open subset of the plane and $\eps > 0$ is a small parameter known as the {\it nematic correlation length}. In \cite{GM} we considered the functional $E_\eps$ among maps $u \in W^{1, 2}(\Omega, M^3_{s, 1}(\R{}))$, where $M^3_{s, 1}(\R{})$ denotes the set of symmetric, $3\times 3$ matrices with real entries and trace equal to $1$.  The potential $W_\beta$ can be expressed as
$$
W_\beta(u) = \frac{(1-\abs{u}^2)^2}{4}-\beta\rm{det}(u),
$$
where $1\leq \beta < 3$; here and throughout the paper, for two matrices $A, B$ of the same size, we consider the inner product $\langle A, B\rangle = {\rm tr}(B^TA)$, along with its induced norm $\abs{A}^2 =\langle A, A\rangle$.  For $\beta \in [1, 3[$ the potential $W_\beta$ is minimized \cite{GM} by the elements of the set
$$
\mathcal{P} = \{P \in M^3_{s, 1} : \,\,  P^2=P\}
$$
of $3\times 3$, rank-1, orthogonal projection matrices.  Our aim in \cite{GM} was to study the global minimizers of $E_\eps$, in the limit $\eps \to 0$, among maps $u \in W^{1, 2}(\Omega, M^3_{s, 1}(\R{}))$ that satisfy the boundary condition $u = u_b$ on $\partial \Omega$.  A crucial hypothesis in \cite{GM} was that
$$
u_b : \partial \Omega \to \mathcal{P}
$$
represents a non-contractible curve in $\mathcal P$.

Roughly speaking, if $u_\eps$ denotes a global minimizer of $E_\eps$ under the conditions we just described, the results of \cite{GM} show that, along subsequences denoted by $\eps_n \to 0$, there is a single interior point $a \in \Omega$ such that $u_{\eps_n}\to u$ in $W^{1, 2}(\Omega \setminus B_r(a))$ for any fixed $r > 0$, where $u : \Omega \to \mathcal{P}$ is a projection-valued map.  Furthermore, the limit map $u$ locally minimizes the Dirichlet integral in $\Omega\setminus \{a\}$, so it is a harmonic map.  Finally, if we write $\frac{\partial}{\partial z}$ for the standard complex derivative on the plane, and $[A, B]=AB-BA$ denotes the commutator of the matrices $A$ and $B$, we showed that the {\it current vector} of $u$, defined by
$$
j(u) = \left [  u, \frac{\partial u}{\partial z} \right ],
$$
splits as a sum of a meromorphic function with an explicit singular part, plus a map in $W^{1, 2}$.  The appendix contains a more detailed description of the results from \cite{GM} and elsewhere that will be relevant to the present work.

As in \cite{GM}, in this paper we consider the minimizers of \eqref{LdGEnergy} among all maps $u\in W^{1, 2}(\Omega, M^3_{s, 1})$ which satisfy the condition $u = u_b$ on $\partial \Omega$, where
$$
u_b : \partial \Omega \to \mathcal{P}
$$
is a fixed, non-contractible curve in $\mathcal P$. We improve upon the results from \cite{GM} in two ways.  First, we find a more detailed description for the limits $u$ of global minimizers $u_\eps$ of $E_\eps$.  Second, we use this refined description to provide an expansion of the energy $E_\eps(u_\eps)$ of global minimizers, valid for small, but fixed, $\eps > 0$. 

\medskip
\medskip

The first result giving a better description of the limits of global minimizers of $E_\eps$ as $\eps \to 0$ is summarized in the following proposition (here and elsewhere in the paper $\mathbb{S}^k$ denotes the unit sphere in $\R{k+1}$):

\begin{proposition}\label{convergence_geodesic_third_eval}
Let $a\in \Omega$ be the distinguished point given by Theorem 1 of \cite{GM}, $\eps_n \to 0$, and $u_{\eps_n}\in W^{1, 2}(\Omega, M^3_{s, 1})$ be a sequence of global minimizers of $E_{\eps_n}$ such that $u_{\eps_n}\to u$ in $W^{1, 2}_{loc}(\Omega \setminus \{a\}, M^3_{s, 1})$.  There is a unit vector-valued map $k \in W^{1, 2}(\Omega, \mathbb{S}^2)$ such that $u(x) k(x) = 0$ for all $x\in \Omega \setminus \{a\}$. 

\medskip
\medskip

\noindent Furthermore, if we define $\gamma_r : \mathbb{S}^1\to \mathcal P$ by
$$
\gamma_r(\omega) = u(a+r\,\omega),
$$
then there is a closed geodesic $\gamma_0 : \mathbb{S}^1\to {\mathcal P}$ such that $\gamma_r \to \gamma_0$ as $r\to 0$ in $W^{1, 2}(\mathbb{S}^1, {\mathcal P})$ 
\end{proposition}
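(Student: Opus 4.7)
Since each $u(x)$ is a rank-one projection, the condition $u(x)k(x)=0$ says $k(x)$ lies in the two-dimensional kernel $L_x^{\perp}\subset\R{3}$ of $u(x)$, where $L_x=\text{range}(u(x))$. I would first produce $k$ as a continuous unit section of the rank-$2$ bundle $L^{\perp}$ over $\Omega\setminus\{a\}$. Since $\Omega\setminus\{a\}$ deformation retracts onto a circle $C$ around $a$, and the boundary loop $u_b$ is non-contractible in $\mathcal{P}\cong\RP{2}$, the line bundle $L$ restricts on $C$ to the M\"obius bundle; hence $L^{\perp}|_C$ is a rank-$2$ bundle with $w_1\neq 0$. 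Over $S^1$ every such bundle splits as M\"obius $\oplus$ trivial, because all orientation-reversing elements of $O(2)$ are conjugate reflections. The trivial summand provides the desired unit section. For Sobolev regularity, $k(x)$ can be written locally as $(I-u(x))e_i/|(I-u(x))e_i|$ for suitable fixed unit vectors $e_i$ on open sets covering $\Omega\setminus\{a\}$ (namely where $e_i\notin L_x$), and these local $W^{1,2}$ choices can be matched to the global topological section. Since $|k|=1$ and $\{a\}$ has zero $2$-capacity in $\R{2}$, the bounded $W^{1,2}$ map $k$ defined on $\Omega\setminus\{a\}$ extends to a map in $W^{1,2}(\Omega,\mathbb{S}^2)$.

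\textbf{Convergence to a geodesic.} In polar coordinates $(\rho,\theta)$ centered at $a$,
\[
\int_{B_R(a)\setminus B_r(a)} \frac{|\nabla u|^2}{2}\,dx \;=\; \int_r^R \frac{d\rho}{\rho}\,E(\gamma_\rho) \;+\; \int_r^R \rho\,d\rho\int_0^{2\pi}\frac{|\partial_\rho u|^2}{2}\,d\theta,
\]
with $E(\gamma_\rho)=\tfrac12\int_{\mathbb{S}^1}|\gamma_\rho'|^2$. I would combine this identity with the sharp energy asymptotics from \cite{GM}, which identifies the total Dirichlet energy on a small disc around $a$ as $E_\star|\log r|+O(1)$, where $E_\star$ is the minimum energy of non-contractible loops in $\mathcal{P}$. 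Together with the lower bound $E(\gamma_\rho)\ge E_\star$ (since $\gamma_\rho$ sits in a fixed non-trivial homotopy class), this forces $E(\gamma_\rho)\to E_\star$ as $\rho\to 0$. Boundedness of $\{\gamma_r\}$ in $W^{1,2}(\mathbb{S}^1,\mathcal{P})$ gives, along any sequence $r_n\to 0$, weak convergence of a subsequence to some $\gamma_0$; lower semicontinuity together with $E(\gamma_{r_n})\to E_\star$ upgrades this to strong $W^{1,2}$ convergence, and makes $\gamma_0$ an energy minimizer in its homotopy class, hence a closed geodesic.

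\textbf{The hard part.} The main obstacle is promoting subsequential convergence to full convergence $\gamma_r\to\gamma_0$ as $r\to 0$, since $\mathcal{P}$ admits a continuous family of non-contractible closed geodesics (one per $2$-plane through the origin in $\R{3}$), and a priori different sequences could select different limits. Resolving this requires a rigidity argument anchoring $\gamma_0$ to local data of $u$ at $a$; a natural input is the current vector $j(u)=[u,\partial u/\partial z]$, whose explicit singular part at $a$ from \cite{GM} should specify the plane containing $\gamma_0$, or equivalently the value $k(a)\in\mathbb{S}^2$. This is also the bridge between the two halves of the proposition: once $k$ exists globally with $uk=0$, the values of $u$ near $a$ are forced into the great circle of projections onto lines in $k(a)^{\perp}$, and $\gamma_0$ must traverse this great circle as a closed geodesic.
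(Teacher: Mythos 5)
Your topological construction of a continuous unit section $k$ of $L^\perp$ over $\Omega\setminus\{a\}$ (using that $L^\perp|_{S^1}$ is a non-orientable rank-$2$ bundle over a circle, hence M\"obius $\oplus$ trivial) is sound, but the Sobolev-regularity step has a real gap. You claim the local formulas $k=(I-u)e_i/|(I-u)e_i|$ are $W^{1,2}$ and then invoke zero $2$-capacity of $\{a\}$ to extend $k$ to $W^{1,2}(\Omega)$. That removal-of-singularity argument requires $\nabla k\in L^2$ on a full punctured neighbourhood of $a$ \emph{before} extension, and this is precisely what is not obvious: $\nabla u$ behaves like $1/|x-a|$ near $a$ (the rank-one eigenvector of $u$ has a degree-$1/2$ singularity), so $\nabla u\notin L^2(B_r(a))$ and a crude bound $|\nabla k|\lesssim|\nabla u|$ fails to close the estimate. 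What makes $k$ Sobolev near $a$ is a cancellation: writing $u=nn^T$ with $n=\cos\beta\, n_0(\alpha)+\sin\beta\, e_3$, one has $|\nabla k|\lesssim|\nabla\beta|+|\sin\beta|\,|\nabla\alpha|$, with $|\nabla\alpha|\sim 1/|x-a|$, so the required bound is $\sin^2\beta/|x-a|^2=\langle u,P_3\rangle/|x-a|^2\in L^1$. This is exactly Proposition~\ref{latitude_control} (proved from the decomposition $j(u)=\tfrac{\hat\theta}{2r}\Lambda+\nabla^\perp\phi$ of \cite{GM}, the decay of $\nabla\phi$ from Proposition~\ref{properties_psi}, and the commutator identities for $[u,P_3]$). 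Without that input your argument shows $k$ is continuous and locally $W^{1,2}$ away from $a$, but not that $k\in W^{1,2}$ of the punctured disc, so the capacity argument is not yet applicable.

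On the second half, you correctly locate the difficulty --- $\mathcal{P}$ has a continuum of non-contractible closed geodesics, so subsequential limits could a priori differ --- and your instinct to anchor $\gamma_0$ via the singular part of $j(u)$ is the right one, but you stop short of an argument. The paper makes this quantitative: in the logarithmic variable $\xi$ with $v(\xi)=u(e^\xi)$, Step~2 of Theorem~\ref{near_sing} together with Proposition~\ref{latitude_control} gives $\beta(\xi_1,\cdot)\to 0$ uniformly, and Steps~3--4 of that proof (using the Hopf-differential expansion $\omega_u=-\tfrac{1}{8z^2}+h$ to get $\nabla\alpha_1\in L^1$) give $\alpha_1(\xi_1,\cdot)\to\alpha^*$ uniformly as $\xi_1\to-\infty$. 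Those two uniform limits pin down a single limiting great circle, i.e. a single geodesic $\gamma_0$, and upgrade your subsequential statement to full convergence $\gamma_r\to\gamma_0$ as $r\to 0$. In short: both halves of your sketch have the right geometric picture, but both rely on decay estimates for the tilt ($\langle u,P_3\rangle$) and the phase ($\alpha_1$) near $a$ that are the actual content of the paper's argument and are not supplied by the topological and energy-comparison considerations you give.
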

\begin{remark}
We emphasize that in the above proposition the convergence $\gamma_r \to \gamma_0$ in $W^{1, 2}(\mathbb{S}^1, {\mathcal P})$ is for $r \to 0$, not along a particular sequence $r_n \to 0$.  Results in this spirit appear in \cite{GM} and \cite{MRvS02}, but only for sequences $r_n \to 0$.

It is also worth mentioning that this proposition confirms the intuition that, while the first two eigenvectors of the map $u$ are singular at $a\in \Omega$, the third eigenvector should be smooth---although at this point we can only prove that it is in $W^{1, 2}$.
\end{remark}

Our next result makes use of the Hopf differential of the map $u$, the definition of which we recall in \eqref{def_hopf_diff}.

\begin{theorem}\label{current_vector}
Let $\omega_u$ denote the Hopf differential of the map $u$, which we assume to be a limit of global minimizers of $E_\eps$.  From the Proposition \ref{Hopf_Diff_split} we have
$$
\omega_u(z) = -\frac{1}{8(z-a)^2} + h(z),
$$
where $h$ is a holomorphic map in all of $\Omega$.
\medskip

\noindent Let now $Z_{\omega_u}$ denote the set of zeros of $\omega_u$ in $\Omega$.  Under the hypothesis that $Z_{\omega_u} = \emptyset$, there are
\begin{enumerate}
\item a fixed orthogonal basis $\Lambda_1$, $\Lambda_2$, $\Lambda_3$ of the set $M^3_a(\R{})$ of $3 \times 3$ anti-symmetric matrices that satisfies
$$
[\Lambda_1, \Lambda_2] = \Lambda_3, \,\,\,\,  \Lambda_j^3 = - \Lambda_j, \,\,\, j=1, 2, 3,
$$
\item a real-valued function $g: \Omega \setminus \{a\} \to \R{},$ defined up to a sign,
\item a fixed projection $P \in {\mathcal P}$ and 
\item a multi-valued map $S : \Omega \setminus \{a\} \to O(3)$
\end{enumerate}
such that
$$
u = SPS^T \,\,\,\,\mbox{and}\,\,\,\,  j(u) = \mu_u S( \cosh(g) \Lambda_1 + i \sinh(g) \Lambda_2) S^T,
$$
where $-2\mu_u^2 = \omega_u$.  Furthermore, letting $\Gamma_j = S\Lambda_jS^T$, $j=1, 2, 3$, the function $g$ satisfies
$$
-\frac{i}{4} (\Delta g)\, \Gamma_3 = \frac{i\abs{\omega_u}}{4} \sinh(2g) \Gamma_3 = \frac{1}{2}[\overline{j(u)}, j(u)] \,\,\,\,\mbox{locally in}\,\,\,\, \Omega.
$$
When $Z_{\omega_u}\neq \emptyset$, it is discrete in $\Omega$ and all conclusions of the theorem remain valid locally away from $Z_{\omega_u}$.  This is due to the fact that the equation $-2\mu_u^2 = \omega_u$ is no longer valid globally in $\Omega \setminus \{a\}$.
\medskip

\noindent Finally, regardless of the nature of $Z_{\omega_u}$, we also have
$$
\int_\Omega \abs{\omega_u}\sinh^2(g) < +\infty.
$$

\end{theorem}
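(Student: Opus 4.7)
My plan is to reduce the statement to local, moving-frame computations on the symmetric-space target $\mathcal{P} \cong \mathbb{RP}^2$, and to get the integrability from a pointwise identity relating $|\nabla u|^2$ to $|\omega_u|$ and $g$. Because $u$ takes values in $\mathcal{P}$ and $\pi_1(\mathcal{P}) = \mathbb{Z}/2$, I locally lift $u = SPS^T$ for a fixed projection $P$, with $S\in O(3)$ multi-valued on $\Omega\setminus\{a\}$. I choose $P$ and the basis $\Lambda_1, \Lambda_2, \Lambda_3$ of $M^3_a(\mathbb{R})$ so that $[\Lambda_3, P]=0$, i.e.\ $\Lambda_3$ generates the stabilizer of $P$ and $\mathfrak{m} := \mathrm{span}(\Lambda_1, \Lambda_2)$ is the tangential part of $\mathfrak{so}(3)$ at $P$. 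Writing the pulled-back Maurer--Cartan form $A := S^T\partial_z S = \sum_j \alpha_j \Lambda_j$ (complex antisymmetric since $S\in O(3)$), a direct computation gives $\partial_z u = S[A,P]S^T$ and $j(u) = S[P,[A,P]]S^T$. The component $\alpha_3$ drops out of the double bracket because $[\Lambda_3, P]=0$, so $j(u)$ depends linearly only on $(\alpha_1, \alpha_2)$, and $\omega_u = \mathrm{tr}((\partial_z u)^2)$ is a complex quadratic form in the same pair; the hypothesis $-2\mu_u^2 = \omega_u$ becomes a single complex constraint on this pair.

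Putting $j(u)$ into the claimed form is then a gauge-fixing step. The residual gauge $S \mapsto S\exp(\theta\Lambda_3)$ with $\theta$ real rotates $(\alpha_1, \alpha_2)$ as a complex $2$-vector by a real $O(2)$ transformation (preserving $\alpha_1^2+\alpha_2^2$) and shifts $\alpha_3$ by $\partial_z\theta$. Solving pointwise for $\theta$ so that $(\alpha_1, \alpha_2)$ reduces to the required multiples of $\cosh(g)$ and $i\sinh(g)$ with $g$ real is a finite-dimensional linear-algebraic step; the global well-definedness on $\Omega\setminus Z_{\omega_u}$ and the sign ambiguity of $g$ come from continuation along paths combined with the $\mathbb{Z}/2$-holonomy already present in the multi-valued lift $u = SPS^T$. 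For the PDE for $g$ I view $\mathcal{P}$ as a symmetric space with decomposition $\mathfrak{so}(3) = \mathfrak{h}\oplus\mathfrak{m}$, $\mathfrak{h} = \mathbb{R}\Lambda_3$. The harmonic map equation for $u$ reads $\partial_{\bar z}A_z^{\mathfrak{m}} + [A_{\bar z}^{\mathfrak{h}}, A_z^{\mathfrak{m}}] = 0$, which after inserting the canonical form and using $\partial_{\bar z}\mu_u = 0$ (from holomorphy of $\omega_u$) forces $\alpha_3 = i\partial_z g$. The Maurer--Cartan flatness of $A$ projected onto $\mathfrak{h}$ then becomes $\Delta g + |\omega_u|\sinh(2g) = 0$, which is the first equality of the PDE chain. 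The second, $\tfrac{i|\omega_u|}{4}\sinh(2g)\,\Gamma_3 = \tfrac{1}{2}[\overline{j(u)}, j(u)]$, is a short direct computation from the canonical form using $[\Lambda_1, \Lambda_2] = \Lambda_3$.

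For the integrability, the rank-one-projection identity $|j(u)|^2 = |\partial_z u|^2 = |\nabla u|^2/4$ combined with $|j(u)|^2 = |\omega_u|\cosh(2g)$ (read off from the canonical form and $|\mu_u|^2 = |\omega_u|/2$) yields the pointwise relation
\[
|\nabla u|^2 - 4|\omega_u| \;=\; 8\,|\omega_u|\sinh^2(g).
\]
Both terms on the left diverge logarithmically near $a$, but with matching leading coefficients: the asymptotic profile $u \simeq \gamma_0(\theta/2)$ provided by Proposition~\ref{convergence_geodesic_third_eval} contributes $\pi\log(1/r)$ to $\int_{B_R\setminus B_r(a)}|\nabla u|^2$, matching exactly the contribution of the pole $-\tfrac{1}{8(z-a)^2}$ of $\omega_u$ to $4\int|\omega_u|$. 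The full (not merely subsequential) $W^{1,2}$-convergence $\gamma_r \to \gamma_0$ of Proposition~\ref{convergence_geodesic_third_eval} is what makes this cancellation uniform as $r \to 0$, yielding the claimed $L^1$-bound. The set $Z_{\omega_u}$ is automatically discrete since $h$ is holomorphic, and the local statements near zeros of $\omega_u$ follow by applying the same construction on each connected component of $\Omega\setminus Z_{\omega_u}$. I expect the global gauge selection and the integrability estimate above to be the main technical obstacles, the latter being precisely where the strengthened $W^{1,2}$ convergence from Proposition~\ref{convergence_geodesic_third_eval} is indispensable.
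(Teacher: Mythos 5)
Your overall scheme is the same as the paper's: lift $u=SPS^T$ through $O(3)$, use the Cartan decomposition $\mathfrak{so}(3)=\mathfrak{h}\oplus\mathfrak{m}$ with $\mathfrak{h}=\mathrm{span}(\Lambda_3)$ the stabilizer of $P$, read the harmonicity and the Maurer--Cartan flatness of the pulled-back connection, and use the residual $O_P(3)$-gauge to bring $j(u)$ to the canonical $\cosh/\sinh$ form. The place where your argument has a real gap is precisely the step you describe as ``solving pointwise for $\theta$'' and appealing to ``continuation along paths.'' Pointwise normalization of the complex pair by a real rotation only determines $\theta$ modulo $\pi$ at each point, and nothing in a bare continuation argument guarantees that a single-valued real function $\theta$ (equivalently, a single-valued $f$ in the gauge $S=R e^{f\Lambda_3}$) exists on the cover, nor that the resulting $g$ has only a sign ambiguity rather than a worse monodromy. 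This is exactly what the paper's explicit construction is there to settle: one works on the universal cover $H_{\Omega^*}$, solves $\partial_{\bar\xi}a_1=\bar\alpha_1$ by H\"ormander's existence theorem, and---because $Z_{\omega_u}=\emptyset$ and $\Omega$ is simply connected---extracts a \emph{global} square root $\mu_v$ of $\omega_v/2$, then applies Lemma~\ref{lemmita} to write the holomorphic pair as $(\mu_v\cos\zeta,\mu_v\sin\zeta)$ for a global holomorphic $\zeta$. The gauge function is then $f=\mathrm{Re}(a_1+\zeta)$ and $g=\mathrm{Im}(a_1+\zeta)$, which is globally defined on $H_{\Omega^*}$ by construction; the ``up to sign'' ambiguity comes from the choice of square root $\mu_v$. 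Your version skips the point where the hypothesis $Z_{\omega_u}=\emptyset$ actually enters, and is the reason the local/global dichotomy in the statement is as it is.

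A secondary remark concerning the final integrability claim: the paper's proof in fact does not explicitly address $\int_\Omega|\omega_u|\sinh^2 g<\infty$, so your cancellation argument is useful supplementary content, and the pointwise identity $|\nabla u|^2-\mathrm{const}\cdot|\omega_u|=\mathrm{const}\cdot|\omega_u|\sinh^2 g\ge 0$ is indeed the right thing to use. However, the full $r\to 0$ convergence $\gamma_r\to\gamma_0$ in $W^{1,2}(\mathbb{S}^1,\mathcal P)$ from Proposition~\ref{convergence_geodesic_third_eval} is a statement about circle traces and does not by itself control $\int_{\Omega\setminus B_r(a)}|\nabla u|^2$ up to a bounded remainder. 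What actually makes the logarithmic divergences cancel uniformly is the decomposition $j(u)=\frac{\hat\theta}{2r}\Lambda+\nabla^\perp\phi$ with $\phi\in W^{1,2}\cap L^\infty$ established in \cite{GM}, which gives $\int_{\Omega\setminus B_r(a)}|\nabla u|^2=\pi\ln(1/r)+O(1)$, matching the contribution of the double pole $-\tfrac1{8(z-a)^2}$ to $\int_{\Omega\setminus B_r(a)}|\omega_u|$. Swapping Proposition~\ref{convergence_geodesic_third_eval} for this decomposition would make your integrability argument complete.
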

\noindent Before we state our next results, several comments are in order.  First, for the map $S : \Omega \setminus \{a\} \to O(3)$ to be multivalued we must have
$$
S(r, \theta + 2\pi) \neq S(r, \theta),
$$
using the polar coordinates centered at $a\in \Omega$. However, if we let
$$
O_P(3) = \{R\in O(3) : RPR^T = P\}
$$
to be the stabilizer in $O(3)$ of the projection $P \in \mathcal{P}$ singled out in the last theorem, then
$$
S(r, \theta + 2\pi)^TS(r, \theta) \in O_P(3).
$$
One can think of the map $S$ as a lift of $u$ through $O(3)$.  Using the properties of $\mathcal{P}$-valued harmonic maps, we prove that
$$
\frac{\partial S^T}{\partial z}S = -i\frac{\partial g}{\partial z}\Lambda_3 + \mu_u ( \cosh(g) \Lambda_1 + i \sinh(g) \Lambda_2),
$$
where $g$ and $\Lambda_j$, $j=1, 2, 3$ are as in Theorem \ref{current_vector}.  This last equation can be thought of as a set of over-determined differential equations satisfied by $S$.  Then, the equation
$$
-\Delta g\, = \abs{\omega_u} \sinh(2g)
$$
is the compatibility condition for the over-determined equations satisfied by $S$.  

Now looking at the equations satisfied by $g$ stated in Theorem \ref{current_vector}, we notice in particular that $g = 0$ identically if and only if $\left [ \frac{\partial u}{\partial \bar{z}},  \frac{\partial u}{\partial z}  \right ]=-[\overline{j(u)}, j(u)] = 0$ in all of $\Omega$.  Thinking of $u:\Omega \setminus \{a\} \to \mathcal{P}$, locally, as a parameterization of a portion of $\mathcal{P}$, then $\abs{[\overline{j(u)}, j(u)]}$ is the area factor of this parameterization.  This says that $g$ vanishes identically if and only if the image of $u$ has zero $2$-dimensional area.  Our results show that in this case the image of $u$ is contained in a closed geodesic of $\mathcal{P}$, and the map $u$ has the same structure that the canonical harmonic maps of \cite{BBH}.

Finally, the properties of the Hopf differential, particularly Proposition \ref{Hopf_Diff_split}, was pointed out in \cite{MRvS01}.  For the sake of completeness, we provide a proof of this, which follows closely that of \cite{BBH}.

\medskip
\medskip

Having formulated the results concerning the limit map $u$, we state now an expansion of the energy valid for a family $u_{\eps_n}$ of converging global minimizers of $E_{\eps_n}$.

\begin{theorem}\label{main_theo}
Let $\eps_n \to 0$, let $u_n\in W^{1, 2}(\Omega, M_{s, 1}^3(\R{}))$ be a global minimizer of $E_{\eps_n}$, and assume $u_n \to u$ in $W^{1,2}(\Omega \setminus B_r(a), M^3_{s, 1}(\R{}))$ for every fixed $r>0$.  Let also $\omega_u$ denote the Hopf differential of $u$, defined in Definition \ref{def_hopf_diff}, and let $g:\Omega \setminus \{a\} \to \R{}$ be the multi-valued function described in Theorem \ref{current_vector}.  We have the expansion
\begin{align}
\int_{\Omega}e_{\eps_n}(u_n) = I(r, \eps_n) + 2\int_{\Omega \setminus B_r(a)} \abs{\omega_u}  + 2\int_{\Omega \setminus B_r(a)} \abs{\omega_u}\sinh^2(g) + o(1) + q(r).\label{energy_expansion}
\end{align}
Here $I(r, \eps)$ is defined in equation \ref{energy_canonical_flat}, $o(1)$ represents a quantity that goes to $0$ as $n\to \infty$, and $q(r)$ represents a quantity that is independent of $\eps > 0$ and such that $q(r) \to 0$ as $r\to 0$.

\end{theorem}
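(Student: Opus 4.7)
The plan is a Bethuel--Brezis--H\'elein style core-energy decomposition. For fixed $r>0$ write
\[\int_\Omega e_{\eps_n}(u_n) = \int_{B_r(a)} e_{\eps_n}(u_n) + \int_{\Omega\setminus B_r(a)} e_{\eps_n}(u_n),\]
handle the outer piece using the hypothesis $u_n\to u$ in $W^{1,2}_{\mathrm{loc}}$ together with the structural formulas of Theorem~\ref{current_vector}, and show the inner piece equals $I(r,\eps_n)$ modulo $o(1)+q(r)$ via matching upper and lower bounds. Adding the two gives \eqref{energy_expansion}.

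For the outer piece, strong $W^{1,2}$-convergence on $\Omega\setminus B_r(a)$ directly gives $\int \tfrac12|\nabla u_n|^2 \to \int\tfrac12|\nabla u|^2$, and local elliptic regularity for the Euler--Lagrange system (as already used in \cite{GM}) upgrades the convergence enough to force $\int W_\beta(u_n)/\eps_n^2\to 0$ there, since $u\in\mathcal P$ a.e. I would then rewrite $\tfrac12|\nabla u|^2$ by plugging $j(u)=\mu_u S(\cosh(g)\Lambda_1+i\sinh(g)\Lambda_2)S^T$, with $\omega_u=-2\mu_u^2$, into the identity expressing $\tfrac12|\nabla u|^2$ as a multiple of $|j(u)|^2$ valid for any projection-valued map (a direct consequence of $u^2=u$, which forces $\partial u/\partial z$ into off-diagonal blocks with respect to $u$). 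The orthogonality $\langle\Lambda_1,\Lambda_2\rangle=0$ and the normalization of the $\Lambda_j$ then collapse the pointwise density to $2|\omega_u|+2|\omega_u|\sinh^2(g)$, and integrating produces the two outer terms of \eqref{energy_expansion}.

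For the inner piece I would prove matching bounds
\[I(r,\eps_n)-q_-(r)-o(1)\;\le\;\int_{B_r(a)}e_{\eps_n}(u_n)\;\le\;I(r,\eps_n)+q_+(r)+o(1).\]
For the upper bound, pick via Fubini a radius $\rho\in[r/2,r]$ along which $u_n\to u$ strongly in $W^{1,2}(\partial B_\rho)$; by Proposition~\ref{convergence_geodesic_third_eval} the trace is close to the closed geodesic $\gamma_0$, so I can replace $u_n$ on $B_\rho(a)$ by the canonical flat vortex of energy $I(\rho,\eps_n)$, glue to $u_n$ across a thin annulus at cost $o(1)+q(r)$, and invoke minimality $E_{\eps_n}(u_n)\le E_{\eps_n}(\mathrm{test})$. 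For the lower bound, monotonicity and energy-concentration estimates for $E_{\eps_n}$ near $a$, combined with the topological obstruction produced by the non-contractible boundary trace, should force any admissible competitor with the observed angular profile to carry at least $I(r,\eps_n)-q(r)-o(1)$ on $B_r(a)$.

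The hard part is the inner lower bound at the $o(1)+q(r)$ resolution: extracting the constant $I(r,\eps_n)$ exactly, rather than just the leading $\pi\log(r/\eps_n)$, demands simultaneous control of the topological core contribution and the correction produced by $g$, which may grow unboundedly as one approaches $a$. Here the integrability $\int_\Omega|\omega_u|\sinh^2(g)<+\infty$ guaranteed by Theorem~\ref{current_vector} is essential: it forces the $g$-correction to contribute only $q(r)$ to the outer integral, so the inner model need only reproduce the canonical closed-geodesic vortex. Combining this with the overdetermined compatibility equations for the lift $S$ stated immediately after Theorem~\ref{current_vector}, I expect the inner comparison to reduce to a standard Bethuel--Brezis--H\'elein type lower bound for the canonical flat core, finishing the proof of \eqref{energy_expansion}.
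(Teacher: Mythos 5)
Your decomposition and overall plan match the paper's: split at $\partial B_r(a)$, rewrite the outer Dirichlet energy of the limit $u$ via the structural formula of Theorem~\ref{current_vector}, and reduce the inner piece to $I(r,\eps_n)$ up to $o(1)+q(r)$ via matching upper and lower bounds. The outer part of your sketch is correct: the appendix estimates (and the end of Lemma~8 of \cite{GM}) give uniform control of $\nabla u_{\eps_n}$ and $W_\beta(u_{\eps_n})/\eps_n^2$ away from $a$, and expanding $j_{\C}(u)=\mu_u S(\cosh(g)\Lambda_1+i\sinh(g)\Lambda_2)S^T$ into $\tfrac12|\nabla u|^2$ yields the two outer integrals.

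Where your proposal departs from the paper is the inner lower bound. You invoke ``monotonicity and energy-concentration estimates near $a$'' and a topological obstruction to push any competitor above $I(r,\eps_n)-q(r)-o(1)$, and you flag this as the hard step. In fact the paper's argument (Step~5 of Theorem~\ref{near_sing}) is considerably more elementary and goes in the opposite logical direction: it does not try to bound $u_{\eps_n}$'s inner energy from below by a Jerrard--Sandier-type device; instead it \emph{modifies $u_{\eps_n}$ itself} into an admissible competitor for $I(r,\eps_n)$. Concretely, one keeps $u_{\eps_n}$ unchanged on $B_{r/2}(a)$, interpolates through several thin annuli to the nearest-point projection $v_{\eps_n}$, then to the limit $u$, and finally to canonical flat data on $\partial B_r(a)$, using the angle variables $\alpha_1,\beta$. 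The resulting map is admissible in the definition of $I(r,\eps_n)$, so $I(r,\eps_n)\leq\int_{B_r(a)}e_{\eps_n}(\text{modified map})\leq\int_{B_r(a)}e_{\eps_n}(u_{\eps_n})+o(1)+q(r)$, which is exactly the lower bound you need; the interpolation costs are controlled by $\nabla\alpha_1,\nabla\beta$ and the $W_\beta$-smallness in the annulus. Your upper bound (take the minimizer with canonical flat data on $B_r(a)$, interpolate out to match $u_{\eps_n}$ on $\partial B_{2r}(a)$, invoke minimality of $u_{\eps_n}$) is essentially the paper's Step~6 and is fine. So the gap is not fatal, but you should replace the monotonicity heuristic with the explicit competitor construction, which is what actually delivers the sharp $o(1)+q(r)$ error.

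One small additional point: the identity $\tfrac12|\nabla u|^2=2|\omega_u|+2|\omega_u|\sinh^2(g)$ is not completely automatic from the formula for $j_{\C}(u)$ and the ``orthogonality'' of $\Lambda_1,\Lambda_2$ alone; you need the specific normalization $\Lambda_j^3=-\Lambda_j$ (hence $|\Lambda_j|^2=2$), the pointwise identity $|\nabla u|^2=|j(u)|^2$ for $\mathcal P$-valued maps (which uses $|[P,A]|^2=|A|^2$ for $A$ in the image of $Q_P$), and $|\omega_u|=2|\mu_u|^2$. It is worth writing this computation out in full rather than taking it on faith, because the constants are exactly what make the two outer integrals come out with the coefficient $2$.
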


\begin{proof}[Proof of Theorem \ref{main_theo}.]
We split
$$
\int_{\Omega}e_{\eps_n}(u_n) = \int_{B_r(a)}e_{\eps_n}(u_n) + \int_{\Omega \setminus B_r(a)}e_{\eps_n}(u_n).
$$
The estimate of the difference
$$
\int_{B_r(a)}e_{\eps_n}(u_n) - I(r, \eps_n)
$$
is contained in Theorem \ref{near_sing}.

\medskip
\medskip

Next, by the results in the appendix we have
$$
\int_{\Omega \setminus B_r(a)}e_{\eps_n}(u_n) = \int_{\Omega \setminus B_r(a)}\frac{\abs{\nabla u}^2}{2} + o(1).
$$
The results in Theorem \ref{current_vector} show that
$$
\int_{\Omega \setminus B_r(a)}\frac{\abs{\nabla u}^2}{2} = 2\int_{\Omega \setminus B_r(a)} \abs{\omega_u}  + 2\int_{\Omega \setminus B_r(a)} \abs{\omega_u}\sinh^2(g).
$$

\end{proof}

In the reference \cite{GM} we showed that the current vector of a limit map $u$ has the expression
\begin{equation}\label{current_split}
j(u) = \nabla^\perp \left ( \frac{1}{2}\ln\left ( \frac{1}{\abs{x-a}}\right ) \Lambda + \phi \right ),
\end{equation}
where $\Lambda$ is a constant, $3\times 3$ anti-symmetric matrix normalised so that $\Lambda^3 = -\Lambda$, and $\phi \in (W^{1, 2}\cap L^\infty)(\Omega, M^3_a(\R{}))$.  So far we have been unable to show that the map $\phi$ is smooth.  However, if we assume this, we can give another expression for the energy of global minimizers $u_\eps$ that converge in $W^{1,2}_{loc}(\Omega \setminus \{a\})$ to a projection-valued map $u$.  This is the content of our next theorem.  To state it, we assume the current vector of $u$ can be written in the form \eqref{current_split}, where $\phi \in W^{1, \infty}(\Omega, M^3_a(\R{}))$.  Note that by adding and subtracting the regular part of the Green's function for $\Omega$ we can write 
$$
j(u) = \nabla^\perp \left ( \pi G(x, a) \Lambda + \phi_1 \right ),
$$
where $G(x, y)$ is the Green's function for $\Omega$, and we then re-state our hypothesis as $\phi_1 \in W^{1, \infty}(\Omega)$. We now present our last theorem.
\begin{theorem}\label{expansion_psi}
Let $\eps_n \to 0$, let $u_n\in W^{1, 2}(\Omega, M_{s, 1}^3(\R{}))$ be a minimizer of $E_{\eps_n}$, and assume $u_n \to u$ in $W^{1,2}(\Omega \setminus B_r(a), M^3_{s, 1}(\R{}))$ for every fixed $r>0$.  
With the notation above we have
\begin{align*}
\int_\Omega e_{\eps_n}(u_n) &= I(r, \eps_n) + \frac{\pi}{2} \ln\left (\frac{1}{r} \right) + \frac{R(a, a)}{2} \\ &+  \int_\Omega G(x, a) \langle \Lambda , \left [ \frac{\partial u}{\partial x_1}, \frac{\partial u}{\partial x_2} \right ] \rangle \,dx + \frac{1}{2}\int_\Omega \abs{\nabla \phi_1}^2 + q(r) + o(1).
\end{align*}
As in our last Theorem, $I(r, \eps)$ is defined in equation \ref{energy_canonical_flat}, $o(1)$ represents a quantity that goes to $0$ as $n\to \infty$, and $q(r)$ represents a quantity that is independent of $\eps > 0$ and such that $q(r) \to 0$ as $r\to 0$.  Finally,
$$
R(x, y) = G(x, y) - \frac{1}{2\pi}\ln\left ( \frac{1}{\abs{x-y}}\right )
$$
is the regular part of the Green's function for $\Omega$.
\end{theorem}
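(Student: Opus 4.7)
The approach is to continue from where the proof of Theorem \ref{main_theo} stops, namely at
$$\int_{\Omega}e_{\eps_n}(u_n) = I(r, \eps_n) + \frac{1}{2}\int_{\Omega \setminus B_r(a)} |\nabla u|^2 + o(1) + q(r),$$
and then to evaluate the remaining Dirichlet integral explicitly using the enhanced hypothesis $\phi_1 \in W^{1, \infty}(\Omega)$. Writing $\psi := \pi G(\cdot, a)\Lambda + \phi_1$, the identity $j(u) = \nabla^\perp \psi$ together with the projection constraint $u^2 = u$ yields coordinate relations of the form $[u, \partial_{x_1}u] = -\partial_{x_2}\psi$, $[u, \partial_{x_2}u] = \partial_{x_1}\psi$, which in turn express $\frac{1}{2}|\nabla u|^2$ pointwise as a multiple of $|\nabla \psi|^2$ fixed by the normalization $|\Lambda|^2 = 2$.

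Expanding $|\nabla\psi|^2$ along $\psi = \pi G(\cdot, a)\Lambda + \phi_1$ produces three integrals over $\Omega \setminus B_r(a)$: the pure singular term proportional to $\int|\nabla G(\cdot, a)|^2$, the cross term proportional to $\int \nabla G\cdot \nabla \langle\Lambda, \phi_1\rangle$, and the regular term $\int|\nabla \phi_1|^2$. For the pure singular part, Green's first identity together with $G(\cdot, a) \equiv 0$ on $\partial \Omega$ reduces everything to a boundary integral on $\partial B_r(a)$; inserting the local expansion
$$G(x, a) = \frac{1}{2\pi}\ln\frac{1}{|x-a|} + R(x, a)$$
and noting that the angular averages of $\omega \cdot \nabla R(a, a)$ against $d\theta$ vanish, this yields the $\frac{\pi}{2}\ln(1/r) + \frac{R(a, a)}{2}$ contribution of the stated expansion, with remainder $O(r\ln(1/r)) = q(r)$.

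For the cross term I would integrate by parts. The boundary integral on $\partial \Omega$ vanishes because $G \equiv 0$, and that on $\partial B_r(a)$ is $O(r\ln(1/r)) = q(r)$ using the $W^{1, \infty}$ bound on $\phi_1$. The interior piece involves $\Delta\langle\Lambda, \phi_1\rangle = \langle \Lambda, \Delta\phi_1\rangle$, where the decisive input is the identity
$$\Delta \psi = 2\bigl[\partial_{x_1}u,\, \partial_{x_2}u\bigr] \qquad\text{on } \Omega\setminus\{a\},$$
which one obtains by cross-differentiating the coordinate relations above. Since $\Delta(\pi G(\cdot, a)\Lambda) = 0$ on $\Omega\setminus\{a\}$, the same identity holds with $\phi_1$ in place of $\psi$, and the cross term therefore reduces to the required $\int_\Omega G(x, a)\langle \Lambda, [\partial_{x_1}u, \partial_{x_2}u]\rangle\,dx$. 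The regular term converges to $\frac{1}{2}\int_\Omega |\nabla \phi_1|^2$ with error $O(r^2) = q(r)$, again by $\phi_1 \in W^{1, \infty}$.

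The main technical obstacle is the careful bookkeeping of prefactors in the Green's-function boundary integral on $\partial B_r(a)$, so that the coefficients $\frac{\pi}{2}$ and $\frac{1}{2}$ in front of $\ln(1/r)$ and $R(a, a)$ emerge cleanly after accounting for $|\Lambda|^2 = 2$ and the identification of $\nabla^\perp$ on matrix-valued objects. Equally crucial is controlling the small-circle boundary terms arising from the various integrations by parts; the enhanced hypothesis $\phi_1 \in W^{1, \infty}$, strictly stronger than the $W^{1,2}\cap L^\infty$ regularity available unconditionally, is precisely what forces those contributions into the $q(r)$ remainder.
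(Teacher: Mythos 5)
The paper does not actually supply a proof of Theorem~\ref{expansion_psi} anywhere in the text, so there is nothing to compare against. Your strategy---reduce via Theorem~\ref{near_sing} and the appendix to $\frac12\int_{\Omega\setminus B_r(a)}|\nabla u|^2$, then expand $\frac12|\nabla u|^2=\frac12|\nabla\psi|^2$ with $\psi=\pi G(\cdot,a)\Lambda+\phi_1$ into a pure Green-function piece, a cross term, and a $|\nabla\phi_1|^2$ piece---is the natural one and surely what the authors intend. The step you leave implicit, that $|\nabla u|^2=|j(u)|^2$, does hold: $\partial_{x_i}u$ is tangent to $\mathcal P$ at $u$, $j(u)_i=[u,\partial_{x_i}u]=Q_u(j(u)_i)$, and $\mathrm{ad}_u$ restricted to these subspaces is an isometry; you should say so. Also note a sign: the paper's own computation in the proof of Proposition~\ref{properties_psi} yields $-\Delta\psi=2[\partial_{\xi_1}v,\partial_{\xi_2}v]$, so the identity you invoke should read $\Delta\phi_1=-2[\partial_{x_1}u,\partial_{x_2}u]$, not with a $+$ sign.

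The genuine problem is the constant bookkeeping, which you assert ``emerges cleanly'' but do not carry out, and which in fact does not reproduce the coefficients in the statement. With $\psi=\pi G\Lambda+\phi_1$ and $|\Lambda|^2=2$ one gets $\frac12|\nabla\psi|^2=\pi^2|\nabla G|^2+\pi\,\nabla G\cdot\nabla\langle\Lambda,\phi_1\rangle+\frac12|\nabla\phi_1|^2$. Green's identity on $\Omega\setminus B_r(a)$ gives $\int|\nabla G|^2=\frac{1}{2\pi}\ln\frac1r+R(a,a)+q(r)$, so the pure singular part is $\frac{\pi}{2}\ln\frac1r+\pi^2R(a,a)+q(r)$, not $\frac{\pi}{2}\ln\frac1r+\frac{R(a,a)}{2}$; likewise the cross term, after integrating by parts against $\Delta G=0$ and $G|_{\partial\Omega}=0$ and using $\Delta\langle\Lambda,\phi_1\rangle=-2\langle\Lambda,[\partial_{x_1}u,\partial_{x_2}u]\rangle$, comes out to $2\pi\int_\Omega G\langle\Lambda,[\partial_{x_1}u,\partial_{x_2}u]\rangle\,dx+q(r)$, not $\int_\Omega G\langle\Lambda,[\partial_{x_1}u,\partial_{x_2}u]\rangle\,dx$. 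A quick sanity check exposes the discrepancy: take $\Omega=B_L(a)$ and $u$ the canonical flat map $u_0$, so $R(x,a)\equiv\frac{1}{2\pi}\ln L$, $\phi_1$ is constant, and $[\partial_{x_1}u_0,\partial_{x_2}u_0]=0$. Then $\frac12\int_{\Omega\setminus B_r}|\nabla u_0|^2=\frac{\pi}{2}\ln\frac{L}{r}=\frac{\pi}{2}\ln\frac1r+\pi^2R(a,a)$ exactly, whereas the theorem's expansion would give $\frac{\pi}{2}\ln\frac1r+\frac{R(a,a)}{2}=\frac{\pi}{2}\ln\frac1r+\frac{\ln L}{4\pi}$. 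So you cannot claim the coefficients $\frac{\pi}{2}$ and $\frac12$ ``emerge cleanly'' --- with the paper's conventions for $G$ and $\Lambda$, the correct constants in front of $R(a,a)$ and the $G$-weighted integral are $\pi^2$ and $2\pi$ respectively, and a proof must either produce those or explain why the statement's normalization differs.
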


\medskip
\medskip

Many authors have studied the Landau-de Gennes energy \eqref{LdGEnergy} in the last decade, particularly in the limit limit as $\eps \to 0$.  The authors of \cite{HM}, \cite{HMP} and \cite{MZ} all provide descriptions of the global minimizers of $E_\eps$ in the limit $\eps \to 0$, in a $3$-dimensional domain.  Several other problems related to \ref{LdGEnergy} in $3$-d domains have been studied, from homogenization via $\Gamma$-convergence, to stability of particular solutions, to the appearance of line defects in the minimizers in the limit $\eps \to 0$, in \cite{Can02}, \cite{CZ02}, \cite{CZ01}, \cite{INSZ01} and \cite{NZ}.  In $2$ dimensions, \cite{dFRSZ02}, \cite{dFRSZ01}, \cite{INSZ02}, \cite{INSZ03}, \cite{INSZ04}, \cite{KRSZ}, among other results, prove existence and multiplicity of symmetric solutions under appropriate boundary conditions, and study stability of point defects.

The study \cite{Can01} is perhaps the closest to the issues considered in the present work.  In \cite{Can01} the author considers a family of energy functionals that contain \ref{LdGEnergy} as a particular case, and establishes convergence of minimizers in the $\eps \to 0$ limit, among other results.

Also related to our work is that of \cite{MRvS02} and \cite{MRvS01}. There, the authors consider an energy that is significantly more general than $E_\eps$ from \ref{LdGEnergy}.  They analyze singular limit $\eps \to 0$, find a $\Gamma$-limit for this energy, and obtain an energy for the location of the singularities that appear in minimizers of $E_\eps$ as $\eps\to 0$.  Their results apply to a wide range of manifolds, that include $\mathcal{P}$ as a particular case.  Because of this generality, however, their results for the $\Gamma$-limit of $E_\eps$ are rather implicit, and can be made explicit only for very special boundary conditions.

Some of the tools used in \cite{dFRSZ02} are similar to ours, but this work deals with a completely different regime. In particular, in this paper the authors consider families of functions $u_\eps \in W^{1, 2}(\Omega, M^3_s(\R{}))$ such that $E_\eps(u_\eps) \leq C$ as $\eps \to 0$, for some constant $C>0$ independent of $\eps$.  Another important difference with our work is that, throughout \cite{dFRSZ02} the authors assume that their boundary data $u:\partial \Omega \to \mathcal{P}$ satisfies $ue=0$, where $e\in \mathbb{S}^2$ is a fixed unit vector.  They also assume that their boundary condition $u:\partial \Omega \to \mathcal{P}$ can be lifted through a smooth $n:\partial \Omega \to \mathbb{S}^2$, in the sense that
$$
u = nn^T \,\,\, \mbox{on}\,\,\,  \partial \Omega.
$$
We do not assume either of these hypotheses in our work.

Perhaps the main contributions of our paper are the proofs of the results we present here.  A first crucial fact we appeal to is that, for projection-valued maps $u:\Omega \to \mathcal{P}$, the current vector
$$
j(u) = \left [  u, \frac{\partial u}{\partial z}\right ]
$$
can be thought of as a set of differential equations satisfied by $u$.  Concretely, for the projection-valued map $u$ we have
$$
\frac{\partial u}{\partial z} = \left [  u, j(u) \right ],
$$
and this equation holds pointwise in $\Omega$.  This fact, along with the decomposition \ref{current_split} of $j(u)$ we found in \cite{GM}, and Proposition \ref{properties_psi}, which follows the arguments of \cite{BBH}, allow us to derive several of our conclusions.

A second important fact we use is that there is an integrable system that appears naturally in the study of projection valued maps that arise as limits of minimizers of $E_\eps$.  Indeed, for such a projection-valued map $u:\Omega \to \mathcal{P}$, we have
$$
\frac{\partial}{\partial \bar{z}} j(u) = -[\overline{j(u)}, j(u)].
$$
Lifting $u$ locally through a map $S:\Omega \to O(3)$, in the sense that $u=SPS^T$ for a fixed $P\in \mathcal{P}$, we can use this last equation to derive a differential system for $S$ that gives us Theorem \ref{current_vector}.  This in turn allows us to derive Theorem \ref{main_theo}.  The integrable system we obtain is well-known to geometers, and seems to have first appeared in \cite{DPW}.  To the best of our knowledge, however, it has not been used to this point in the Landau-de Gennes literature.

We believe that our methods, while currently restricted to the manifold $\mathcal{P}$, should lead to more explicit expressions, particularly for the energy for the location of singularities, than those currently available.  It is also worth mentioning that our methods seem to provide a natural generalization to the complex-valued methods used in \cite{BBH}.

In the remainder of the paper we provide the proofs of the results we have just described.  Section 2 contains the analyzis of the limiting map $u$.  In Section 3 we analyze the energy of a sequence global minimizers $u_{\eps_n}$, $\eps_n \to 0$, near the singular point $a\in \Omega$.  In Section 4 we prove Theorem \ref{current_vector}. In Section 5. we use numerical simulations to illustrate our results.  Finally, the Appendix contains details of results from \cite{GM} and elsewhere that we need in this work.

\section{The limiting map}

Throughout this section $u:\Omega \setminus \{a\} \to \mathcal P$ will denote a limit of minimizers of the Landau-de Gennes energy, and $a\in \Omega$ will be the unique singularity of $u$ in $\Omega$.  We will assume throughout that $a=0 \in \R{2} \cong \C{}$.  Let us recall here that $u$ satisfies
$$
[u, \Delta u]=0.
$$
We will also use the notation
$$
j(u) = [u, \nabla u]
$$
for the current vector.  Note for future reference that for any projection-valued map we have the identity
$$
\nabla u = [u, j(u)].
$$
We interpret this expression as saying that the current vector $j(u)$ contains the coefficients of the differential equation $\nabla u = [u, j(u)]$ satisfied by $u$.

\medskip
\medskip

We know from \cite{GM} that
$$
j(u) = \frac{\hat{\theta}}{2r} \Lambda + \nabla^\perp \phi.
$$
Here $r = \abs{x-a}$ is the distance to the singularity, $\hat{\theta}$ is the standard unit vector from polar coordinates centered at $a = 0 \in \R{2}$, $\phi \in (L^\infty \cap W^{1, 2})(\Omega, M_a^3(\R{}))$, and $\Lambda \in M^3_a(\R{})$ is a constant anti-symmetric matrix, the representative of a closed geodesic in $\mathcal P$ in the language of \cite{GM}.  By choosing appropriate coordinates in $\mathcal P$ we can choose
$$
\Lambda = \left (  \begin{array}{ccc} 0 & -1 & 0 \\ 1 & 0 & 0 \\ 0 & 0 & 0  \end{array} \right ).
$$

Let us consider now the domain
$$
U = \{\xi \in \C{}:  \,\,  e^\xi \in \Omega\}.
$$
The set $U$ is the preimage of $\Omega \setminus \{a\}$ by the exponential map (recall we assume $a=0$).  Defining $v:U \to \mathcal P$ and $\psi : U \to M_a^3(\R{})$ by
$$
v(\xi) = u(e^\xi) \,\,\,\  \mbox{and}\,\,\,\,  \psi(\xi) = \phi(e^\xi),
$$
we see that both $v(\xi) = v(\xi + 2\pi i)$ and $\psi(\xi) = \psi(\xi+2\pi i)$ whenever $e^{\xi} \in \Omega$, that is, both $v$ and $\psi$ are $2\pi i$-periodic.  Denote now
$$
H = \{\xi \in U : -\pi < \mathrm{Im}(\xi) < \pi\}, 
$$
and
$$
H_{\lambda} = \{\xi \in H : \,\,\,  \mathrm{Re}(\xi)\leq \lambda\},
$$
for $\lambda \in \R{}$.  We observe that, since $\phi \in (W^{1, 2}\cap L^\infty)(\Omega, M_a^3(\R{}))$, then $\psi \in L^\infty(H, M_a^3(\R{}))$ and $\nabla \psi \in L^2(H, M_a^3(\R{}))$.
We will now prove the following
\begin{proposition}\label{properties_psi}
The map $\psi: U \to M_a^3(\R{})$ satisfies $\nabla \psi \in (W^{1, 2}\cap L^\infty)(H, M_a^3(\R{}))$, and
$$
\nabla \psi(\xi_1, \xi_2) \to 0 \,\,\,\,\mbox{as}\,\,\,\,  \xi_1 \to -\infty, 
$$
uniformly in $\xi_2$. 
\end{proposition}
\begin{remark}\label{gradient_phi_decent}
Notice that this proposition, along with the relation $\psi(\xi) = \phi(e^{\xi})$, allow us to conclude that 
$$
\lim_{r\to 0}  \left ( r \sup_{x\in \partial B_r(a)} \abs{\nabla \phi(x)} \right ) = 0.
$$
\end{remark}
\begin{proof}[Proof of Proposition \ref{properties_psi}] The proof follows an argument that appears in \cite{BBH0}.

\medskip
\medskip

\noindent {\bf Step 1.}  The map $\psi$ satisfies the inequality
$$
-\Delta \left (\abs{\nabla \psi}^2  \right ) + \abs{D^2\psi}^2 \leq C ( \abs{\nabla \psi}^2 + \abs{\nabla \psi}^4)
$$
for some constant $C>0$.

\begin{proof}[Proof of Step 1.]
Direct computations show that
\begin{equation}\label{current_v}
j(v) = [v, \nabla v] = \frac{e_2}{2}\Lambda + \nabla^\perp \psi,
\end{equation}
where $e_2 = (0,1)$ is the second vector of the canonical basis in $\R{2}$.  Now $v$ is a projection-valued map.  This has the consequence that
$$
\left [  \frac{\partial v}{\partial \xi_1}, \frac{\partial v}{\partial \xi_2} \right ] = -\left [  \left [  v, \frac{\partial v}{\partial \xi_1} \right ] , \left [  v, \frac{\partial v}{\partial \xi_2} \right ]   \right ],
$$
and from here we deduce
$$
- \Delta \psi = \nabla^\perp \cdot j(v) = 2\left [  \frac{\partial v}{\partial \xi_1}, \frac{\partial v}{\partial \xi_2} \right ] = -2\left [  \left [  v, \frac{\partial v}{\partial \xi_1} \right ] , \left [  v, \frac{\partial v}{\partial \xi_2} \right ]   \right ].
$$
From the equation \eqref{current_v} we obtain
$$
\Delta \psi = 2\left [  \frac{\partial \psi}{\partial \xi_2}, \frac{\Lambda}{2} - \frac{\partial \psi}{\partial \xi_1}\right ] =2 \left [  \frac{\partial \psi}{\partial \xi_1}, \frac{\partial \psi}{\partial \xi_2}\right ] + \left [ \frac{\partial \psi}{\partial \xi_2}, \Lambda  \right ].
$$
From here we derive the identity
\begin{align}
-\Delta \left (\frac{\abs{\nabla \psi}^2}{2}  \right ) + \abs{D^2\psi}^2 = 2 \langle \left [ \frac{\partial \psi}{\partial \xi_1}, \frac{\partial \psi}{\partial \xi_2}  \right ], \Delta \psi \rangle + \langle \Lambda, \left [ \frac{\partial \psi}{\partial \xi_1}, \frac{\partial^2 \psi}{\partial \xi_1\partial \xi_2}\right ] + \left [  \frac{\partial \psi}{\partial \xi_2}, \frac{\partial^2 \psi}{\partial \xi_2^2} \right ]  \rangle. \label{laplacian_nabla_psi_sqr}
\end{align}
We now observe that
$$
\langle \left [ \frac{\partial \psi}{\partial \xi_1}, \frac{\partial \psi}{\partial \xi_2}  \right ], \Delta \psi \rangle \leq \abs{\left [ \frac{\partial \psi}{\partial \xi_1}, \frac{\partial \psi}{\partial \xi_2}  \right ]}\abs{\Delta \psi} \leq C(\delta) \abs{\nabla \psi}^4 + \delta\abs{D^2 \psi}^2.
$$
Similarly, 
$$
\langle \Lambda, \left [ \frac{\partial \psi}{\partial \xi_1}, \frac{\partial^2 \psi}{\partial \xi_1\partial \xi_2}\right ] + \left [  \frac{\partial \psi}{\partial \xi_2}, \frac{\partial^2 \psi}{\partial \xi_2^2} \right ]  \rangle \leq C\abs{\nabla \psi}\abs{D^2\psi} \leq C(\delta) \abs{\nabla \psi}^2 + \delta \abs{D^2 \psi}^2.
$$
For $\delta > 0$ small enough, we can absorb the terms $\delta \abs{D^2 \psi}^2$ in the term $\abs{D^2 \psi}^2$ on the left hand side of \ref{laplacian_nabla_psi_sqr}.  The conclusion of Step 1 follows from here.
\end{proof}

\noindent {\bf Step 2.}  $\nabla \psi \in W^{1, 2}(H, M_a^3(\R{}))$.

\begin{proof}[Proof of Step 2.]  Let $M > L + 1> 0$, and consider a cut-off function $\chi_{L, M}  \in C^\infty_0(\C{})$ such that $0 \leq \chi_{L, M} \leq 1$ and
$$
\chi_{L, M}(\xi) = \left \{   \begin{array}{cc} 1 & \mbox{for}\,\,\,   \xi_1 \in [-M, -L] \,\,\,\mbox{and}\,\,\,  \xi_2 \in [-\pi, \pi]  \\
0 & \xi_1 \notin [-M-1, -L+1] \,\,\,\mbox{or}\,\,\,  \xi_2 \notin [-2\pi, 2\pi]. \end{array} \right .
$$
We choose $M, L$ large enough so that the support ${\rm supp}(\chi_{L, M})\subset U$.  Note that the definition of $\chi_{L, M}$ allows us to require that the derivatives of $\chi_{L, M}$, up to order $2$, be bounded uniformly in $L, M>0$.  From Step 1 we obtain
\begin{equation}\label{step_1_sobolev_est_psi}
\int_U \chi_{L, M}^2 \abs{D^2 \psi}^2 \leq C\int_{{\rm supp}(\chi_{L, M})} \abs{\nabla \psi}^2 + C\int_{U} \chi_{L, M}^2 \abs{\nabla \psi}^4.
\end{equation}
We will estimate the last integral in this inequality using Gagliardo-Nirenberg.  To this end, we first notice that
$$
\int_{B_r(a)}\abs{\nabla \phi}^2 = \int_{H_{\ln(r)}} \abs{\nabla \psi}^2,
$$
where we recall that
$$
H_{\ln(r)} = \{\xi\in \C{}: e^\xi \in \Omega , \,\, \xi_1\leq \ln(r), \xi_2\in [-\pi, \pi]\}.
$$
Since $\phi \in W^{1, 2}(\Omega)$, it holds 
$$
\lim_{r\to 0} \int_{B_r(a)}\abs{\nabla \phi}^2 = 0,
$$
and hence
$$
\lim_{L \to -\infty}  \left ( \lim_{M \to -\infty} \int_{{\rm supp}(\chi_{L, M})} \abs{\nabla \psi}^2 \right )  = 0.
$$
Next, we recall that Gagliardo-Nirenberg inequality establishes that
$$
\int_{\C{}} f^2 \leq \left (  \int_{\C{}} \abs{\nabla f} \right )^2
$$
for any function $f \in C^1_0(\C{})$.  We apply this estimate to $f=\chi_{L, M}\abs{\nabla \psi}^2$.  Clearly
$$
\abs{\nabla f} \leq C ( \1_{{\rm supp}(\chi_{L, M})}\abs{\nabla \psi}^2 + \chi_{L, M} \abs{\nabla \psi}\abs{D^2 \psi}),
$$
where $\1_{A}$ denotes the characteristic function of the set $A$.  From here we obtain
$$
\int_{U} \chi_{L, M}^2 \abs{\nabla \psi}^4 \leq C \left ( \int_{{\rm supp}(\chi_{L, M})} \abs{\nabla \psi}^2 \right )^2 + C\int_{{\rm supp}(\chi_{L, M})} \abs{\nabla \psi}^2 \int_{U} \chi_{L, M}^2\abs{D \psi}^2.
$$
Since $\mathop{\lim}\limits_{L \to -\infty}  \left ( \mathop{\lim}\limits_{M \to -\infty} \int_{{\rm supp}(\chi_{L, M})} \abs{\nabla \psi}^2 \right )  = 0$, choosing $L>0$ large enough the last estimate and \eqref{step_1_sobolev_est_psi} yield
\begin{equation}
    \label{eq:in}
    \int_{U}\chi_{L, M}^2 \abs{D^2 \psi}^2 \leq C \left ( \int_{{\rm supp}(\chi_{L, M})} \abs{\nabla \psi}^2 \right )^2 + C\int_{{\rm supp}(\chi_{L, M})} \abs{\nabla \psi}^2.
\end{equation}
Finally, we let $M \to -\infty$ in \eqref{eq:in}, and recall that $\psi$ is $2\pi i$-periodic.  We obtain
$$
\int_{H_{-L}} \abs{D^2 \psi}^2 \leq C\left ( \int_{H_{-L+1}} \abs{\nabla \psi}^2 \right )^2 + C\int_{H_{-L+1}} \abs{\nabla \psi}^2
$$
for some constant $C > 0$ that remains bounded as $L \to -\infty$.  Now, we know the map $\phi\in W^{1, 2}(\Omega, M_a^3(\R{}))$ is smooth away from $a\in \Omega$.  We conclude that $D^2\psi \in L^2(H, M_a^3(\R{}))$, which concludes the proof of Step 2.
\end{proof}

\noindent  {\bf Step 3.}  $\nabla \psi(\xi_1, \xi_2) \to 0$ as $\xi_1 \to -\infty$, uniformly in $\xi_2\in \R{}$.  

\begin{proof}[Proof of Step 3.] To show this we first recall that in our last step we proved the estimate
$$
\int_{H_{-L}} \abs{D^2 \psi}^2 \leq C\left ( \int_{H_{-L+1}} \abs{\nabla \psi}^2 \right )^2 + C\int_{H_{-L+1}} \abs{\nabla \psi}^2.
$$
We can use this inequality to conclude that
$$
\lim_{L\to -\infty} \int_{H_{-L}} \abs{D^2 \psi}^2 = 0.
$$
Now the map $\psi$ is $2\pi i$-periodic, which is to say that it is $2\pi$-periodic in the variable $\xi_2$.  Let $\xi_0 = (\xi_{0, 1}, \xi_{0, 2}) \in H_{-\lambda}$ and $R > 0$ be such that $B_{2R}(\xi_{0}) = B_{2R}((\xi_{0, 1}, \xi_{0, 2}))\subset H_{-\lambda}$. This implies that for a fixed $R>0$, we have
$$
\lim_{\xi_{0, 1}\to -\infty} \int_{B_{2R}((\xi_{0, 1}, \xi_{0, 2}))} (\abs{D^2 \psi}^2 + \abs{\nabla \psi}^2 ) = 0.
$$
By standard Sobolev embeddings, we conclude that
$$
\lim_{\xi_{0, 1}\to -\infty} \int_{B_{2R}((\xi_{0, 1}, \xi_{0, 2}))} \abs{\nabla \psi}^p = 0
$$
for any $1 < p < \infty$.  Next we recall from Step 1 that
$$
-\Delta \left (\abs{\nabla \psi}^2  \right ) + \abs{D^2\psi}^2 \leq C ( \abs{\nabla \psi}^2 + \abs{\nabla \psi}^4)
$$
for some constant $C>0$.  Fixing $p > 2$, Theorem 8.17 of \cite{GT} yields
$$
\mathop{\sup}\limits_{B_R((\xi_{0, 1}, \xi_{0, 2}))} \abs{\nabla \psi}^2 \leq C\int_{B_{2R}((\xi_{0, 1}, \xi_{0, 2}))} \abs{\nabla \psi}^{2p} + C\int_{B_{2R}((\xi_{0, 1}, \xi_{0, 2}))} \abs{\nabla \psi}^{4p},
$$
for some $C > 0$ that depends on $R>0$ and $p>2$.  Since
$$
\lim_{\xi_{0, 1}\to -\infty} \int_{B_{2R}((\xi_{0, 1}, \xi_{0, 2}))} \abs{\nabla \psi}^p = 0
$$
for any $1 < p < \infty$, this proves Step 3, and concludes the proof of the proposition.
\end{proof}\renewcommand{\qedsymbol}{}
\end{proof}

Now recall that
$$
\Lambda = \left (\begin{array}{ccc} 0 & -1 & 0 \\ 1 & 0 & 0 \\ 0 & 0 & 0  \end{array} \right ),
$$
and define
\begin{equation}\label{canonical_flat}
u_0(x) = \frac{1}{2} \left (  \left (  \begin{array}{ccc} 1 & 0 & 0 \\ 0 & 1 & 0 \\ 0 & 0 & 0  \end{array} \right ) + \left (  \begin{array}{ccc} \cos(\theta(x)) & \sin(\theta(x)) & 0 \\ \sin(\theta(x)) & -\cos(\theta(x)) & 0 \\ 0 & 0 & 0  \end{array} \right ) \right ),
\end{equation}
where $\theta(x)$ is the standard angular variable from polar coordinates centered at $a\in \R{2}$.  A direct computation shows that
$$
j(u_0) = [u_0, \nabla u_0] = \frac{\hat{\theta}}{2r}\Lambda,
$$
We will refer to $u_0$ as a {\it canonical flat map} $u_0:\Omega \setminus \{a\} \to \mathcal P$ represented by $\Lambda$.  This is the same as saying that the image of $u_0$ in $\mathcal{P}$ is a closed geodesic.  Observe that the image $u_0(\partial B_r(a))$ of every circle centered at $a$ by $u_0$ is a closed geodesic in $\mathcal P$.

\medskip
\medskip

Let us observe that the images of the matrices $u_0(x)$, $x\in \Omega \setminus \{a\}$, are all contained in a fixed plane, that we will denote by $S_0$.  Now we state the following proposition.
\begin{proposition}
With the notation above, we have
$$
\frac{1}{\abs{x-a}} [\Lambda , [u, u_0]] \in L^2(\Omega, M_a^3(\R{})) \,\,\,\,\,   \mbox{and}\,\,\,\,\,  [u, u_0] \in W^{1, 2}(\Omega, M_a^3(\R{})).
$$
\end{proposition}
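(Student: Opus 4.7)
The plan is to derive a first-order transport-type equation for $v := [u, u_0]$ in which the source term is controlled by $\nabla \phi$, then solve it in $L^2$ using the exponential change of variables and a Fourier decomposition in the angular variable. The key algebraic input is the identity $\nabla u = [u, j(u)]$ valid for any projection-valued map, together with the analogous identity for $u_0$.

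First, I would differentiate $v = [u, u_0]$ using the product rule, substitute $\nabla u = [u, j(u)]$ and $\nabla u_0 = [u_0, j(u_0)]$, and split $j(u) = j(u_0) + \nabla^\perp \phi = \frac{\hat{\theta}}{2r}\Lambda + \nabla^\perp \phi$. A short manipulation using the Jacobi identity cancels the $[u,[u_0,f]]$ terms and yields
\[
\nabla v + \frac{\hat{\theta}}{2r}[\Lambda, v] = [[u, \nabla^\perp \phi], u_0],
\]
whose right-hand side is bounded pointwise by $C|\nabla \phi|$, and hence lies in $L^2(\Omega)$.

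Next, the natural setting to analyze this equation is the exponential coordinate $x = e^\xi$, $\tilde v(\xi) := v(e^\xi)$, since then $r = e^{\xi_1}$ and the coefficient of $[\Lambda, v]$ becomes a constant. Splitting into radial and tangential components, the equation becomes the system
\[
\partial_{\xi_1} \tilde v = h_1, \qquad \partial_{\xi_2} \tilde v + \tfrac{1}{2}[\Lambda, \tilde v] = h_2,
\]
with $|h_1|, |h_2| \leq C |\nabla \psi| \in L^2(H)$ by Proposition \ref{properties_psi}. The map $\tilde v$ is $2\pi$-periodic in $\xi_2$ and antisymmetric-matrix-valued, so I would decompose $\tilde v = v^{\parallel}\Lambda + v^{\perp}$ with $v^{\perp}\perp \Lambda$; only $v^{\perp}$ feels the commutator. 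Fourier-expanding in $\xi_2$ reduces the tangential equation on the perpendicular component to $(in + \tfrac{1}{2}\mathrm{ad}_\Lambda)\hat v_n^{\perp} = \hat h_{2,n}^{\perp}$, and since $\mathrm{ad}_\Lambda$ has eigenvalues $\pm i$ on the two-dimensional subspace $\Lambda^{\perp}$, the operator has eigenvalues $i(n\pm 1/2)$, all of modulus at least $1/2$. Parseval then yields $\|v^{\perp}\|_{L^2(H)} \leq 2\|h_2\|_{L^2(H)} < \infty$.

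Once $[\Lambda, \tilde v] = [\Lambda, v^{\perp}] \in L^2(H)$, the equations give $\partial_{\xi_1}\tilde v, \partial_{\xi_2}\tilde v \in L^2(H)$ as well. Changing variables back via $dx = r^2\, d\xi$ converts these into $\int_\Omega r^{-2}|[\Lambda, v]|^2\, dx = \int_H |[\Lambda, \tilde v]|^2\, d\xi < \infty$ and $\int_\Omega |\nabla_x v|^2\, dx = \int_H |\nabla_\xi \tilde v|^2\, d\xi < \infty$; since $v$ is manifestly bounded, this yields both $[u, u_0] \in W^{1,2}(\Omega, M_a^3(\mathbb{R}))$ and $\frac{1}{|x-a|}[\Lambda, [u, u_0]] \in L^2(\Omega, M_a^3(\mathbb{R}))$. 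The main subtlety is the uniform invertibility of $(in + \tfrac{1}{2}\mathrm{ad}_\Lambda)$ on $\Lambda^{\perp}$: this relies on the exact coefficient $1/2$ matching the winding number one of the canonical geodesic, ensuring no integer mode of $\tilde v$ resonates with the spectrum of $\tfrac{1}{2}\mathrm{ad}_\Lambda$.
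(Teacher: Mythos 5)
Your proposal is correct, and it takes a genuinely different route from the paper, although both start from the same transport-type identity
\[
\nabla[u,u_0] - \frac{\hat\theta}{2r}\bigl[[u,u_0],\Lambda\bigr] = \bigl[[u,\nabla^\perp\phi],u_0\bigr].
\]
The paper's argument is a circle-integration argument: it uses the non-contractibility of the boundary data to observe that on each circle $\partial B_r(a)$ there is a point $x_r$ where the image of $u$ lies in the plane $S_0$, so $[\Lambda,[u,u_0]](x_r)=0$; integrating the gradient identity along arcs of $\partial B_r(a)$ starting at $x_r$ yields a pointwise bound for $|[\Lambda,[u,u_0]]|^2$ on each circle by $\int_{\partial B_r(a)}|[\Lambda,[u,u_0]]|\,|\nabla^\perp\phi|\,dl$, and then Cauchy--Schwarz plus integration in $r$ gives the $L^2$ estimate. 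You instead pass to exponential coordinates, split off the component $v^\perp$ of $[u,u_0]$ orthogonal to $\Lambda$, Fourier-expand in $\xi_2$, and invert the operator $in+\tfrac12\operatorname{ad}_\Lambda$ on the two-dimensional space $\Lambda^\perp$ using that $\operatorname{ad}_\Lambda$ has eigenvalues $\pm i$ there, so the eigenvalues $i(n\pm\tfrac12)$ are bounded away from zero uniformly in $n\in\mathbb{Z}$. Both mechanisms exploit the half-integer winding number carried by the coefficient $\tfrac12$: the paper realizes it topologically (a non-contractible loop in $\mathcal P\cong\mathbb{RP}^2$ must intersect the geodesic through $S_0$), while you realize it spectrally (no integer frequency resonates with $\pm\tfrac12$). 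Your approach avoids the geometric observation about $S_0$ entirely, at the cost of being somewhat more abstract and losing the pointwise bound along circles that the paper obtains as a byproduct. One small point to tidy up: the Fourier decomposition in $\xi_2$ uses the full interval $[-\pi,\pi]$ for each $\xi_1$, which holds only in a half-strip $H_{-\lambda}$ corresponding to a small ball $B_{e^{-\lambda}}(a)$; on the complementary region $\Omega\setminus B_{e^{-\lambda}}(a)$ both conclusions follow immediately from $u,u_0\in (W^{1,2}\cap L^\infty)$ there and the boundedness of $r^{-1}$, so the argument should be presented as a gluing of these two regimes.
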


\begin{proof}

We start by recalling that
$$
\nabla u = [u, j(u)], \,\,\, \nabla u_0 = [u_0, j(u_0)].
$$
Because of this we obtain
\begin{align*}
\nabla [u, u_0] &= [\nabla u, u_0] + [u, \nabla u_0] \\  
&= \left [ \left [u, \frac{\hat{\theta}}{2r} \Lambda + \nabla^\perp \phi \right ], u_0 \right ]  + \left [u,  \left [u_0, \frac{\hat{\theta}}{2r} \Lambda  \right ], \right ] \\
&= \frac{\hat{\theta}}{2r}  \left (\left [ [u, \Lambda], u_0 \right ] + [[\Lambda, u_0], u]\right ) + [[u, \nabla^\perp \phi], u_0]
\\&= \frac{\hat{\theta}}{2r}[[u, u_0], \Lambda] + [[u, \nabla^\perp \phi], u_0],
\end{align*}
where the last identity follows by the Jacobi identity for commutators.  Summarizing, we have
\begin{equation}\label{grad_comm_vect}
\nabla [u, u_0] = \frac{\hat{\theta}}{2r} [[u, u_0], \Lambda] +  [[u, \nabla^\perp \phi], u_0].
\end{equation}
Since $\phi \in W^{1, 2}(\Omega, M_a^3(\R{}))$, this last identity shows that
$$
\frac{1}{\abs{x-a}} [\Lambda, [u, u_0]]\in L^2(\Omega, M_a^3(\R{})) \,\,\,\,  \iff \,\,\,\, [u, u_0] \in W^{1, 2}(\Omega, M_a^3(\R{})).
$$
Hence, we concentrate on proving $\frac{1}{\abs{x-a}} [\Lambda, [u, u_0]]\in L^2(\Omega, M_a^3(\R{}))$.

\medskip
\medskip

Take the commutator of \eqref{grad_comm_vect} with $\Lambda$, and then take the inner product of the resulting equation with $[\Lambda, [u, u_0]]$.  We obtain
\begin{equation}\label{grad_abs_commutator}
\nabla \left (  \frac{\abs{[\Lambda, [u, u_0]]}^2}{2}  \right ) = \langle [\Lambda, [u, u_0]], [\Lambda,  [[u, \nabla^\perp \phi], u_0]] \rangle.
\end{equation}

\medskip
\medskip

Next consider $r>0$ such that $\overline{B_r(a)} \subset \Omega$, and observe that at some $x_r \in \partial B_r(a)$ the image of $u(x_r)$ will be contained in the plane $S_0$ in which the images of $u_0(x)$ are contained.  Because of this, we deduce that
$$
[\Lambda, [u(x_r), u_0(x_r)]] = 0.
$$
For any $x\in \partial B_r(a)$, let $\gamma_{x_r, x}$ be an arc of $\partial B_r(a)$ from $x_r$ to $x$.  Integrating \eqref{grad_abs_commutator} over $\gamma_{x_r, x}$ we obtain
$$
\frac{\abs{[\Lambda, [u, u_0]]}^2}{2}(x) = \int_{\gamma_{x_r, x}} \langle [\Lambda, [u, u_0]], [\Lambda,  [[u, \nabla^\perp \phi], u_0] \rangle \cdot \tau \,dl,
$$
which gives
$$
\abs{[\Lambda, [u, u_0]]}^2(x) \leq C \int_{\partial B_r(a)}  \abs{[\Lambda, [u, u_0]]} \abs{[u, \nabla^\perp \phi]}\,dl
$$
for every $x\in \partial B_r(a)$.  Integrating the last inequality over $\partial B_r(a)$, and dividing by $r^2$, we find
$$
\frac{1}{r^2} \int_{\partial B_r(a)} \abs{[\Lambda, [u, u_0]]}^2 \leq \frac{C}{r} \int_{\partial B_r(a)}  \abs{[\Lambda, [u, u_0]]} \abs{[u, \nabla^\perp \phi]}.
$$
From here we obtain
$$
\frac{1}{r^2} \int_{\partial B_r(a)} \abs{[\Lambda, [u, u_0]]}^2 \leq C \int_{\partial B_r(a)}  \abs{[u, \nabla^\perp \phi]}^2
$$
for every $r>0$ such that $\overline{B_r(a)} \subset \Omega$ and some $C>0$ independent of $r>0$.  Since $\phi \in W^{1, 2}(\Omega, M_a^3(\R{}))$, and $\abs{u}\leq 1$, this last inequality implies 
$$
\frac{1}{\abs{x-a}} [\Lambda, [u, u_0]]\in L^2(\Omega, M_a^3(\R{})).
$$
This concludes the proof of the proposition.

\end{proof}

Now recall that we denote by $S_0$ the plane that contains all the images of $u_0(x)$, $x\in \Omega \setminus \{a\}$, and denote by $P_3$ the orthogonal projection onto the $1$-dimensional subspace in $\R{3}$ orthogonal to $S_0$.  The matrix $P_3$ can be characterized as the orthogonal projection onto the kernel of $\Lambda$, or alternatively, as the only element $P_3\in \mathcal P$ such that $P_3 \Lambda = \Lambda P_3 = 0$.  In particular, $[P_3, \Lambda] = 0$.  A similar argument to the one we used in the last proposition yields the following

\begin{proposition}\label{commutator_u_P_3_in_L_2}
With the notation above, we have
$$
\frac{1}{r}[u, P_3]\in L^2(\Omega, M^3_a(\R{})), \,\,\,\,  [u, P_3]\in W^{1, 2}(\Omega, M^3_a(\R{})),
$$
and
$$
\lim_{r\to 0} \sup_{x\in \partial B_r(a)} \abs{[u(x), P_3]} = 0.
$$
\end{proposition}

\begin{proof}
We begin by recalling that
$$
\nabla u = \frac{\hat{\theta}}{2r}[u, \Lambda] + [u, \nabla^\perp \phi].
$$
Taking commutator of this last identity with $P_3$ we obtain
\begin{equation}\label{commutator_u_P_3}
\nabla [u, P_3] = \frac{\hat{\theta}}{2r}[[u, P_3], \Lambda] + [[u, \nabla^\perp \phi], P_3],
\end{equation}
where in the first term of the right hand side we used Jacobi's identity for commutators, plus the fact that $[\Lambda, P_3]=0$.  As in the proof of the previous proposition, this last identity shows that
$$
\frac{1}{\abs{x-a}} [[u, P_3], \Lambda]\in L^2(\Omega, M_a^3(\R{})) \,\,\,\,  \iff \,\,\,\, [u, P_3] \in W^{1, 2}(\Omega, M_a^3(\R{})).
$$
We will, in fact, prove the stronger statement
\begin{equation}\label{commutator_compares_radius}
\frac{1}{\abs{x-a}} [u, P_3] \in L^2(\Omega, M_a^3(\R{})).
\end{equation}
Clearly this will establish the first two statements of the proposition.  To demonstrate \eqref{commutator_compares_radius}, take the inner product of \eqref{commutator_u_P_3} with $[u, P_3]$, to obtain
\begin{equation}\label{grad_com_u_P_3}
\nabla \left ( \frac{\abs{[u, P_3]}^2}{2} \right ) = \langle [[u, \nabla^\perp \phi], P_3], [u, P_3]\rangle.
\end{equation}
Now let $r>0$ be such that $\overline{B_r(a)} \subset \Omega$, and recall that at some $x_r \in \partial B_r(a)$ the image of $u(x_r)$ will belong to the plane $S_0$ in which the images of $u_0(x)$ are contained.  Because of this, we deduce that
$$
[u(x_r), P_3] = 0.
$$
For any $x\in \partial B_r(a)$, let $\gamma_{x_r, x}$ be an arc of $\partial B_r(a)$ from $x_r$ to $x$.  Integrating equation \eqref{grad_com_u_P_3} over $\gamma_{x_r, x}$ we get
$$
\frac{\abs{[u, P_3]}^2}{2}(x) = \int_{\gamma_{x_r, x}} \langle [u, P_3], [[u, \nabla^\perp \phi], P_3] \rangle \cdot \tau \,dl \leq C\int_{\partial B_r(a)}\abs{[u, P_3]}\abs{[u, \nabla^\perp \phi]},
$$
for every $x\in \partial B_r(a)$.  A similar argument to the one we used in the previous proposition yields
$$
\frac{1}{\abs{x-a}} [u, P_3] \in L^2(\Omega, M_a^3(\R{})).
$$
Also from
$$
\frac{\abs{[u, P_3]}^2}{2}(x)  \leq C\int_{\partial B_r(a)}\abs{[u, P_3]}\abs{[u, \nabla^\perp \phi]}
$$
we obtain
$$
\abs{[u, P_3]}^2(x)  \leq C\int_{\partial B_r(a)}\abs{\nabla^\perp \phi} \leq \frac{C}{r}\int_{\partial B_r(a)}\abs{y-a}\abs{\nabla^\perp \phi}\,dl(y)
$$
for all $x\in \partial B_r(a)$.  By Remark \ref{gradient_phi_decent}, this concludes the proof of the proposition.
\end{proof}

Our next result estimates the inner product $\langle u, P_3\rangle$.

\begin{proposition}\label{latitude_control}
With the notation above we have
$$
\frac{\langle u, P_3 \rangle(x)}{\abs{x-a}^2} \in L^1(\Omega).
$$
Furthermore
$$
\lim_{r\to 0} \sup_{x\in \partial B_r(a)} \abs{\langle u, P_3 \rangle(x)} = 0.
$$
\end{proposition}

\begin{proof}
From our previous computations we know that
$$
\nabla u = \frac{\hat{\theta}}{2} [u, \Lambda] + [u, \nabla^\perp \phi].
$$
Now recall the triple product formula for square matrices
$$
\langle [A, B], C\rangle = \langle [C^T, A], B^T\rangle = \langle [ B, C^T], A^T\rangle.
$$
Taking the inner product of $\nabla u = \frac{\hat{\theta}}{2} [u, \Lambda] + [u, \nabla^\perp \phi]$ with $P_3$, and using the triple product formula for the first term of the right hand side, plus the fact that $[\Lambda, P_3]=0$, we obtain
$$
\nabla \langle u, P_3 \rangle = \langle [u, P_3], \nabla^\perp \phi \rangle.
$$
Recall now that for $\overline{B_r(a)}\subset \Omega$ there is $x_r \in \partial B_r(a)$ such that $u(x_r)\in S_0$.  If $\gamma_{x_r, x}$ is an arc of $\partial B_x(a)$ from $x_r$ to $x\in \partial B_r(a)$, we obtain
$$
\langle u, P_3\rangle (x) = \int_{\gamma_{x_r, x}} \langle [u, P_3], \nabla^\perp\phi \cdot \tau \rangle.
$$
We deduce that
$$
\abs{\langle u, P_3\rangle (x)} \leq \int_{\partial B_r(a)}   \abs{ [u, P_3]}\abs{ \nabla^\perp\phi }
$$
for all $x\in \partial B_r(a)$.  By Remark \ref{gradient_phi_decent}, this implies $\mathop{\lim}\limits_{r\to 0} \mathop{\sup}\limits_{x\in \partial B_r(a)} \abs{\langle u, P_3 \rangle(x)} = 0$.  Now, integrating the last inequality over $\partial B_r(a)$ and dividing by $r^2$ we obtain
$$
\frac{1}{r^2} \int_{\partial B_r(a)} \abs{\langle u, P_3\rangle} \leq 2\pi \int_{\partial B_r(a)}  \frac{\abs{ [u, P_3]}}{r} \abs{ \nabla^\perp\phi}.
$$
Integrating over $r\in [0, R]$ for $R>0$ such that $B_R(a)\subset \Omega$, we find that
$$
\int_{B_R(a)} \frac{\abs{\langle u, P_3\rangle}}{\abs{x-a}^2}\leq \int_{B_R(a)}\frac{\abs{[u, P_3]}}{\abs{x-a}}\abs{\nabla \phi}.
$$
Since both $\nabla\phi, \,\, \frac{[u, P_3]}{\abs{x-a}}\in L^2(\Omega , M_a^3(\R{}))$, this concludes the proof.

\end{proof}

Now we show that the length of the curves $\gamma_r(e^{i\theta}) = u(re^{i\theta})$ approaches the length of closed geodesics in $\mathcal P$ as $r\to 0$.  This is the content of the next proposition.

\begin{proposition}
Let $L$ denote the length of a closed geodesic in $\mathcal P$.  We have
$$
\lim_{r\to 0} \int_0^{2\pi} \abs{\gamma_r} = L.
$$
\end{proposition}

\begin{proof}
Let us first recall that
$$
L = \int_0^{2\pi} \abs{\frac{du_0}{d\theta}}(\theta)\, d\theta = \sqrt{2}\pi.
$$
From \cite{GM} we have that
$$
\int_{\partial B_r(a)} \abs{\nabla u \cdot \tau}^2 = \int_{\partial B_r(a)}\abs{\nabla u_0\cdot \tau}^2 + \int_{\partial B_r(a)}\abs{\nabla u\cdot \nu}^2,
$$
where $\tau$ and $\nu$ denote the tangent and outer normal to $\partial B_r(a)$, respectively, and $u_0$ denotes a canonical flat map.  Now, we also know that
$$
j(u) = \frac{\hat{\theta}}{2r}\Lambda + \nabla^\perp \phi,
$$
from which we deduce that
$$
\nabla u = \frac{\hat{\theta}}{2r}[u, \Lambda] + [u, \nabla^\perp \phi].
$$
We obtain on $\partial B_r(a)$ that $\nabla u \cdot \nu = [u, \nabla^\perp \phi \cdot \nu]$.  With all this we now have
\begin{align*}
\frac{L^2}{2\pi r} &\leq \frac{1}{2\pi r}\left ( \int_{\partial B_r(a)}\abs{\nabla u \cdot \tau} \right )^2 \\ &\leq \int_{\partial B_r(a)}\abs{\nabla u \cdot \tau}^2 = \int_{\partial B_r(a)}\abs{\nabla u_0\cdot \tau}^2 + \int_{\partial B_r(a)}\abs{\nabla u\cdot \nu}^2 \\
&= \frac{\pi}{r} +  \int_{\partial B_r(a)}\abs{[u, \nabla^\perp \phi \cdot \nu]}^2 = \frac{L^2}{2\pi r}+  \int_{\partial B_r(a)}\abs{[u, \nabla^\perp \phi \cdot \nu]}^2.
\end{align*}
Therefore
$$
L^2 \leq \left ( \int_0^{2\pi} \abs{\frac{d  \gamma_r}{d\theta}} \, d\theta \right )^2 \leq L^2 + \frac{2\pi}{r}   \int_{\partial B_r(a)}\abs{y-a}^2\abs{[u, \nabla^\perp \phi \cdot \nu]}^2 \, dl(y).
$$
We now appeal to Remark \ref{gradient_phi_decent} to finish the proof.
\end{proof}

\medskip
\medskip
\medskip
\medskip

Next we use the last proposition to write the limit of minimizers $u$ in terms of two angles near the singularity.  Neither of these angles will be single-valued in $\Omega\setminus \{a\}$, hence we do this for the map $v(\xi) = u(e^\xi)$ in the domain
$$
G_{-\lambda} = \{\xi\in \C{}: {\rm Re}(\xi) < -\lambda\}.
$$
Here we choose $\lambda > 0$ at least large enough for $G_{-\lambda}\subset U = \{\xi \in \C{}:  e^\xi \in \Omega\}$.  An additional condition on $\lambda$ will appear in the next proposition.

\begin{proposition}\label{latitude_longitude}
There is $\lambda >0$ large enough, and real valued functions $\alpha, \beta : G_{-\lambda}\to \R{}$ such that
$$
v(\xi) = n(\xi)n^T(\xi), 
$$
where
$$
n(\xi) = \left (  \begin{array}{c}  \cos(\beta)\cos(\alpha) \\ \cos(\beta)\sin(\alpha)\\ \sin(\beta) \end{array} \right ).
$$
Furthermore, $\beta(\xi + 2\pi i) = -\beta(\xi)$ and $\alpha(\xi+2\pi i) = \alpha(\xi) + \pi$ for all $\xi\in G_{-\lambda}$.  Finally, the function $\alpha$ can be written as $\alpha(\xi) = \frac{\xi_2 + \alpha_1(\xi)}{2}$ for some real valued function $\alpha_1:G_{-\lambda}\to \R{}$ such that $\alpha_1(\xi + 2\pi i) = \alpha_1(\xi)$.

\end{proposition}
\begin{remark}
Notice that the last proposition shows that the vector field
$$
k(\xi) = \left (  \begin{array}{c}  -\sin(\beta)\cos(\alpha) \\ -\sin(\beta)\sin(\alpha)\\ \cos(\beta) \end{array} \right )
$$
satisfies $v(\xi) k(\xi) = 0$ for $\xi \in G_{-\lambda}$.  Also, since $\beta(\xi + 2\pi i) = -\beta(\xi)$, and $\alpha(\xi+2\pi i) = \alpha(\xi) + \pi$, it follows that $k(\xi + 2\pi i) = k(\xi)$.  By our change of variables, this gives a map, that we still call $k \in W^{1, 2}(B_r(a), \mathbb{S}^2)$, such that $u(x)k(x)=0$ for all $x\in B_r(a)\setminus \{a\}$.  Furthermore, we can extend $k$ to all of $\Omega$ while still adhering to $u(x) k(x) = 0$ for $x \in \Omega\setminus \{a\}$.  By our change of variables, and the properties of $\alpha$ and $\beta$ stated in Step 1 of Theorem \ref{near_sing}, this defines a single-valued map, that we still denote by $k \in W^{1, 2}(\Omega, \mathbb{S}^2)$, such that
$$
u(x) k(x) = 0 \,\,\,\mbox{for all}\,\,\,  x\in \Omega \setminus \{a\}.
$$
This proves the first claim of Proposition \ref{convergence_geodesic_third_eval}.
\end{remark}

\begin{proof}[Proof of Proposition \ref{latitude_longitude}.]
First observe that Proposition \ref{latitude_control} ensures that for $s=\frac{1}{2}$, there is $r>0$ small such that
$$
\abs{\langle u, P_3\rangle}(x) < s,
$$
for all $x\in B_r(a)\setminus \{a\}$.  Let us now recall the definition $v(\xi) = u(e^\xi)$, and note that this last condition implies the existence of $\lambda > 0$ large enough such that
$$
\abs{\langle v, P_3\rangle}(\xi) < s
$$
for every $\xi \in G_{-\lambda}$ where
$$
G_{-\lambda} = \{\xi\in \C{}: {\rm Re}(\xi) < -\lambda\}.
$$
Now $G_{-\lambda}$ is simply-connected, so we can lift $v$ through $n:G_{-\lambda}\to \mathbb{S}^2$, that is, we can find a map $n:G_{-\lambda}\to \mathbb{S}^2$ such that
$$
v(\xi) = n(\xi)n(\xi)^T
$$
for every $\xi\in G_{-\lambda}$.  Now let
$$
n(\xi) = \left ( \begin{array}{c} n_1(\xi) \\ n_2(\xi) \\ n_3(\xi) \end{array}\right ),
$$
and observe that $n_3^2(\xi) = \langle v(\xi), P_3\rangle < s = \frac{1}{2}$ for every $\xi \in G_{-\lambda}$.  In particular, we can define $\beta : G_{-\lambda}\to \R{}$ by the condition $\sin(\beta(\xi)) = n_3(\xi)$.  Since $n_3^2(\xi) = \langle v(\xi), P_3\rangle < s = \frac{1}{2}$, we conclude that $\cos(\beta(\xi)) > \frac{1}{\sqrt{2}}$ for every $\xi \in G_{-\lambda}$.  In particular, the map $\zeta : G_{-\lambda}\to \mathbb{S}^1$ defined by
$$
\zeta(\xi) = \left ( \begin{array}{c}   \sec(\beta) \, n_1 \\ \sec(\beta)\,  n_2 \end{array} \right )
$$
is well defined for $\xi \in G_{-\lambda}$.  Again, since $G_{-\lambda}$ is simply-connected, we cal lift $\zeta$ through a real valued function $\alpha$ in such a way that
$$
\zeta(\xi) = \left ( \begin{array}{c}   \cos(\alpha) \\ \sin(\alpha) \end{array} \right ).
$$
Summarizing so far, we have found real valued functions $\alpha, \beta : G_{-\lambda}\to \R{}$ such that
$$
v(\xi) = n(\xi)n^T(\xi),
$$
where
$$
n(\xi) = \left ( \begin{array}{c}  \cos(\beta(\xi)) \cos(\alpha(\xi)) \\ \cos(\beta(\xi))\sin(\alpha(\xi)) \\  \sin(\beta(\xi)) \end{array}\right ).
$$
To prove the last conclusions of the proposition let us recall that $u:\Omega \setminus \{a\}\to \mathcal P$ restricted to any circle $\partial B_r(a)\subset \Omega$ represents a non-contractible curve in $\mathcal P$.  Since $v(\xi) = u(e^\xi) = n(\xi)n^T(\xi)$, we conclude that  $n(\xi + 2\pi i) = -n(\xi)$.  This shows that $\beta(\xi+2\pi i) = -\beta(\xi)$.

\medskip
\medskip

For the last conclusion let us recall the canonical flat map
$$
u_0(x) = \frac{1}{2} \left (  \left (  \begin{array}{ccc} 1 & 0 & 0 \\ 0 & 1 & 0 \\ 0 & 0 & 0  \end{array} \right ) + \left (  \begin{array}{ccc} \cos(\theta(x)) & \sin(\theta(x)) & 0 \\ \sin(\theta(x)) & -\cos(\theta(x)) & 0 \\ 0 & 0 & 0  \end{array} \right ) \right ).
$$
By hypothesis, we know that $u_0$ is homotopic to $u$ in $\Omega\setminus \{a\}$.  This implies that $v(\xi) = u(e^\xi)$ is homotopic to
$$
v_0(\xi) = \frac{1}{2} \left (  \left (  \begin{array}{ccc} 1 & 0 & 0 \\ 0 & 1 & 0 \\ 0 & 0 & 0  \end{array} \right ) + \left (  \begin{array}{ccc} \cos(\xi_2) & \sin(\xi_2) & 0 \\ \sin(\xi_2) & -\cos(\xi_2) & 0 \\ 0 & 0 & 0  \end{array} \right ) \right )
$$
in $G_{-\lambda}$.  Furthermore, a direct computation shows that $v_0$ lifts through $\mathbb{S}^2$ by the map
$$
n_0(\xi) = \left ( \begin{array}{c}  \cos(\xi_2/2)  \\ \sin(\xi_2/2) \\  0 \end{array}\right ).
$$
Now in $G_{-\lambda}$ we have $\sin^2(\beta) < s = \frac{1}{2}$.  Hence
$$
\zeta(\xi) = \left ( \begin{array}{c}   \cos(\alpha) \\ \sin(\alpha) \end{array} \right )
$$
is homotopic in $G_{-\lambda}$ to
$$
\zeta_0(\xi) = \left ( \begin{array}{c}   \cos(\xi_2/2) \\ \sin(\xi_2/2) \end{array} \right )
$$
We conclude that the function $\frac{\alpha_1(\xi)}{2} = \alpha(\xi) - \frac{\xi_2}{2}$ satisfies $\alpha_1(\xi+2\pi i) = \alpha_1(\xi)$.  This concludes the proof of this proposition.

\end{proof}

\medskip
\medskip
\medskip
\medskip

In our next step we recall a property of the Hopf differential.  Let us first recall its definition.  For this we will need to switch to complex derivatives in the plane.  We will use the usual
$$
\frac{\partial}{\partial z} = \frac{1}{2}\left ( \frac{\partial}{\partial x_1}- i \frac{\partial}{\partial x_2} \right ) \,\,\, \mbox{and} \,\,\,  \frac{\partial}{\partial \overline{z}} = \frac{1}{2}\left ( \frac{\partial}{\partial x_1} + i \frac{\partial}{\partial x_2} \right ).
$$
We will also denote
$$
j_{\C{}}(u) = \left [ u, \frac{\partial u}{\partial z} \right ]
$$
for the complex-valued current vector.  We have the relation
$$
j(u) = 2{\rm Re}\left ( j_{\C{}}(u) \right )e_1 - 2{\rm Im}\left ( j_{\C{}}(u) \right )e_2,
$$
where ${\rm Re}(z)$, ${\rm Im}(z)$ denote the real and imaginary parts of $z$, respectively.
\begin{definition}\label{def_hopf_diff}
For our limit of minimizers $u$, its Hopf differential is the function
$$
\omega_u(z) = {\rm tr}\left (\left (j_{\C{}}(u)\right )^2 \right ).
$$
By the properties of projection-valued maps, the Hopf differential can also be defined by
$$
\omega_u(z) = -{\rm tr} \left (\left (\frac{\partial u}{\partial z}\right )^2 \right ).
$$
\end{definition}
\begin{proposition}\label{Hopf_Diff_split}
For the Hopf differential of $u$ we have
$$
\omega_u(z) = -\frac{1}{8z^2} + h,
$$
where $h$ is a holomorphic map in all of $\Omega$.

\end{proposition}
\begin{proof}
To prove this proposition let us recall that our map $u$ is in fact
$$
u = \lim_{\eps \to 0} u_\eps,
$$
where the maps $u_\eps$ are global minimizers of the LdG energy in $\Omega$, and the convergence is strong in $W^{1, 2}_{loc}(\Omega\setminus \{a\}, M_s^3(\R{}))$.  Because of their minimizing character, the maps $u_\eps$ satisfy
$$
-\frac{\partial^2 u_\eps}{\partial z\partial \overline{z}} + \frac{1}{4\eps^2}\left (\nabla_u W\right )(u_\eps) = \lambda_\eps I_3,
$$
where $W(u)$ is the potential term in the energy, and $\lambda_\eps$ is the Lagrange multiplier associated to the restriction ${\rm tr}(u) = 1$.  Multiplying this equation by $\frac{\partial u_\eps}{\partial z}$, and taking trace of the resulting equation, we obtain
\begin{equation}\label{eq_Hopf_eps}
-\frac{\partial \omega_\eps}{\partial \overline{z}}  + \frac{\partial}{\partial z} \frac{W(u_\eps)}{2\eps^2} = 0,
\end{equation}
where we are using the notation
$$
\omega_\eps(z) = -{\rm tr} \left (\left (\frac{\partial u_\eps}{\partial z}\right )^2 \right ).
$$
Let now $G_\eps(z)$ be the convolution of $\frac{2W(u_\eps)}{\eps^2}$ with the Newtonian potential of the plane.  Note that this is well defined, since $W(u_\eps) = 0$ on $\partial \Omega$ by our boundary conditions there.  Note also that
$$
-\frac{\partial^2 G_\eps(z)}{\partial z\partial \overline{z}} = \frac{W(u_\eps)}{2\eps^2}.
$$
This and \eqref{eq_Hopf_eps} yield
$$
\frac{\partial }{\partial \overline{z}} \left ( \omega_\eps + \frac{\partial^2 G_\eps}{\partial z^2} \right ) = 0.
$$
We deduce that the function
$$
h_\eps =  \omega_\eps + \frac{\partial^2 G_\eps}{\partial z^2}
$$
is holomorphic in $\Omega$.  To conclude the proof we show that $h_\eps$ is uniformly bounded on compact sets $K\subset \Omega \setminus \{a\}$.  Since $h_\eps$ is holomorphic, this shows that $h_\eps$ is in fact locally bounded in $\Omega$, which then allows us to conclude that $h_\eps \to h$ along a subsequence for some $h$ holomorphic in $\Omega$.  To show that $h_\eps$ is uniformly bounded in compact sets $K\subset \Omega \setminus \{a\}$ we observe that the properties of $u_\eps$ we list in the appendix allow us to apply Step 1 of the proof of Theorem VII.1 of \cite{BBH}.  This concludes the proof of the proposition.
\end{proof}

\section{Estimates near the singularity}

We now will choose $r > 0$ small but independent of $\eps > 0$, and compare the energy of a minimizer of LdG in $B_r(a)$ with canonical flat data, to the energy of one of our minimizers in the same ball.  We recall that by canonical flat we mean data of the form given in equation \ref{canonical_flat}.  We will use the following notation:
\begin{equation}\label{energy_canonical_flat}
I(r, \eps) = \inf \{\int_{B_r(a)} e_\eps(u) : \,\,\,  u \,\,\, \mbox{is canonical flat on}\,\,\, \partial B_r(a)\}.
\end{equation}
\begin{theorem}\label{near_sing}
For $u$ our limit of minimizers, along a subsequence $\eps_n \to 0$ we can choose $r>0$ small but independent of $\eps_n$ so that
$$
\abs{I(r, \eps_n) - \int_{B_r(a)}e_{\eps_n}(u_{\eps_n})} \leq o(1) + q(r),
$$
where $o(1) \to 0$ as $\eps_n \to 0$, and
$$
q(r) \leq C\int_{B_{2r}(a)}\left ( \abs{\nabla \alpha_1}^2 + \abs{\nabla \beta}^2 + \frac{\abs{\nabla \alpha_1}}{\abs{x-a}} \right ) + \frac{C}{r^2}\int_{B_{2r}(a)} (\alpha_1^2+\beta^2),
$$
for some constant $C>0$ independent of $\eps_n$ and $r>0$.
\end{theorem}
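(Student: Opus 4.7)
The plan is to prove each side of the two-sided bound by surgery in an annular layer around $\partial B_r(a)$, with the cost absorbed into $q(r)$. For $I(r,\eps_n)\le\int_{B_r(a)}e_{\eps_n}(u_{\eps_n})+o(1)+q(r)$ I would take $u_{\eps_n}$ on $B_r(a)$ and modify it in a thin annulus just inside $\partial B_r(a)$ so that the result is canonical flat on $\partial B_r(a)$, yielding a test function for the infimum defining $I(r,\eps_n)$. For the reverse inequality I would start with a minimizer $w^\star$ realizing $I(r,\eps_n)$ (canonical flat on $\partial B_r(a)$), extend it across the annulus $B_{2r}(a)\setminus B_r(a)$ so that the extension matches $u_{\eps_n}$ on $\partial B_{2r}(a)$, splice with $u_{\eps_n}$ on $\Omega\setminus B_{2r}(a)$, and apply the global minimality of $u_{\eps_n}$ against this competitor. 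In both directions the surgery is supported on an annulus contained in $B_{2r}(a)$, which accounts for the domain of integration in $q(r)$.

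The surgery is carried out at the level of the $\mathbb{S}^2$-valued lift supplied by \propref{latitude_longitude}. For $n$ large, \propref{latitude_control} combined with the $W^{1,2}_{loc}$ convergence $u_{\eps_n}\to u$ on the annulus keeps $\abs{\langle u_{\eps_n},P_3\rangle}$ uniformly bounded away from $1$, so $u_{\eps_n}$ admits a single-valued lift through $\mathbb{S}^2$ on the annulus with the same monodromy as $u$, and with angle functions $\alpha_{1,\eps_n}\to\alpha_1$, $\beta_{\eps_n}\to\beta$ in $W^{1,2}$. Fix a radial cutoff $\chi\in C^\infty([0,\infty))$ with $\chi(s)=0$ for $s\le r$, $\chi(s)=1$ for $s\ge 2r$, and $\abs{\chi'}\le C/r$, and define $\tilde\alpha_1=\chi\,\alpha_{1,\eps_n}$, $\tilde\beta=\chi\,\beta_{\eps_n}$, $\tilde\alpha=(\theta+\tilde\alpha_1)/2$, together with
\[
\tilde n=(\cos\tilde\beta\cos\tilde\alpha,\ \cos\tilde\beta\sin\tilde\alpha,\ \sin\tilde\beta)^T,\qquad \tilde u=\tilde n\tilde n^T.
\]
Being projection-valued, $\tilde u$ contributes nothing to the $\eps_n^{-2}$ potential term on the annulus; by construction it is canonical flat on $\partial B_r(a)$ and matches $u_{\eps_n}$ on $\partial B_{2r}(a)$ up to an $o(1)$ $W^{1,2}$-error. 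The analogous construction on an inner annulus (using a cutoff flipped across $\partial B_r(a)$) serves for the first direction.

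The energy cost is then a direct Dirichlet computation. Writing $\abs{\nabla\tilde u}^2=2\abs{\nabla\tilde n}^2=2(\abs{\nabla\tilde\alpha}^2\cos^2\tilde\beta+\abs{\nabla\tilde\beta}^2)$ and expanding $\abs{\nabla\tilde\alpha}^2=\tfrac14\abs{\nabla\theta}^2+\tfrac12\nabla\theta\cdot\nabla\tilde\alpha_1+\tfrac14\abs{\nabla\tilde\alpha_1}^2$, the leading term $\abs{\nabla\theta}^2/4=1/(4\abs{x-a}^2)$ cancels against the canonical-flat contribution already present in $\int_{B_r(a)}e_{\eps_n}(u_{\eps_n})$ up to $o(1)$, by the strong $W^{1,2}$ convergence on the annulus. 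The cross term is bounded by $C\abs{\nabla\tilde\alpha_1}/\abs{x-a}$, and $\abs{\nabla\tilde\alpha_1}^2\le 2\chi^2\abs{\nabla\alpha_{1,\eps_n}}^2+2\abs{\chi'}^2\alpha_{1,\eps_n}^2$ together with $\abs{\chi'}\le C/r$ yields the $r^{-2}\alpha_{1}^2$ term; the analogous expansion for $\abs{\nabla\tilde\beta}^2$ produces $\abs{\nabla\beta}^2+r^{-2}\beta^2$. Passing $\alpha_{1,\eps_n},\beta_{\eps_n}\to\alpha_1,\beta$ in $W^{1,2}$ on the annulus (at the cost of a further $o(1)$) and enlarging the domain of integration to $B_{2r}(a)$ delivers exactly the displayed $q(r)$. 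The principal obstacle I expect is producing the single-valued $\mathbb{S}^2$-lift of $u_{\eps_n}$ on the annulus with monodromy matching that of $u$, so that $\tilde u$ can be spliced consistently with $u_{\eps_n}$ across $\partial B_{2r}(a)$; this rests on \propref{latitude_control}, which keeps $u_{\eps_n}$ uniformly away from $P_3$ on the annulus, and on the strong $W^{1,2}$ convergence $u_{\eps_n}\to u$ provided by the appendix.
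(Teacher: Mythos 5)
Your overall strategy matches the paper's: two-sided bounds via annular surgery, with the lower-direction competitor built by modifying $u_{\eps_n}$ near $\partial B_r(a)$ to become canonical flat, and the upper-direction competitor bridging a minimizer with canonical flat data on $\partial B_r(a)$ to $u_{\eps_n}$ across $B_{2r}(a)\setminus B_r(a)$. However, there is a genuine gap. You carry out all of the surgery at the level of $\mathbb{S}^2$-lifts, i.e.\ of $\mathcal P$-valued maps, and even speak of ``a single-valued lift of $u_{\eps_n}$ through $\mathbb{S}^2$.'' But $u_{\eps_n}$ is $M^3_{s,1}$-valued, not $\mathcal P$-valued, so it admits no such lift, and your interpolant $\tilde u$ is projection-valued while $u_{\eps_n}$ is not. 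Consequently $\tilde u$ cannot coincide with $u_{\eps_n}$ on the splicing circle -- it only agrees ``up to an $o(1)$ $W^{1,2}$-error,'' which is not enough: for the competitor to be admissible against the global minimality of $u_{\eps_n}$, it must agree with $u_{\eps_n}$ \emph{exactly} across that circle (and similarly for the inward surgery against the infimum defining $I(r,\eps_n)$). The paper resolves this with additional interpolation layers that transition first from $u_{\eps_n}$ to its nearest-point projection $v_{\eps_n}\in\mathcal P$ (costing $o(1)$ in energy, using the potential bounds), and only then perform the $\mathcal P$-valued surgery; your construction is missing that bridge.

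A secondary, repairable difference: the paper's final angular layer uses the angle functions $\alpha_1,\beta$ of the \emph{limit} $u$ (transitioning from $u$ to canonical flat), preceded by a layer $\Pi(t\,v_{\eps_n}+(1-t)u)$ that moves from $v_{\eps_n}$ to $u$ inside $\mathcal P$. This avoids ever lifting $u_{\eps_n}$ or $v_{\eps_n}$ to $\mathbb{S}^2$ and hence avoids having to prove $W^{1,2}$ convergence of the $\eps$-dependent angle functions $\alpha_{1,\eps_n},\beta_{\eps_n}$, which your proposal relies on but does not establish. Your Dirichlet computation for the energy cost of the cutoff is otherwise correct and identical in spirit to the paper's.
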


\begin{remark}
In the terminology of the previous section, Step 2 in the following proof, along with Proposition \ref{latitude_control} show that $\beta(\xi_1, \xi_2) \to 0$ as $\xi_1 \to -\infty$, uniformly in $\xi_2$.  This, and Step 4 of the following proof, give the second claim of Proposition \ref{convergence_geodesic_third_eval}.  This also shows that the quantity $q(r)$ that appears in the statement of this theorem has $q(r) \to 0$ as $r\to 0$.

\end{remark}

\begin{proof}

We will prove the statement of the theorem in several steps.  For most of the proof we will let $$H_{-\lambda}=\left\{\xi\in\mathbb{C}:\mathrm{Re}(\xi)<-\lambda,-\pi\leq\mathrm{Im}(\xi)<\pi\right\}$$ be the lift of $B_r(a)$ through the exponential map.  In particular, $\lambda = \ln(\frac{1}{r})$ will be chosen in the course of the proof.  We also write $u = \mathop{\lim}\limits_{\eps_n \to 0} u_{\eps_n}$.  We know this convergence is strong in $W^{1, 2}(\Omega \setminus B_r(a), M^3_{s, 1}(\R{}))$.

\medskip
\medskip

\noindent {\bf Step 1.}  In this first step we assume $\xi \in H_{-\lambda}$ for $\lambda > 0$ chosen as in Proposition \ref{latitude_longitude}.  For such a $\lambda > 0$, the functions $\alpha_1, \beta$ from Proposition \ref{latitude_longitude} satisfy $\nabla \beta, \nabla \alpha_1 \in L^2(H_{-\lambda})$.

\medskip
\medskip

\begin{proof}[Proof of Step 1.]

To prove this, let us recall from Proposition \ref{latitude_longitude} that $v(\xi) = u(e^\xi) = n(\xi)n^T(\xi)$, where
$$
n(\xi) = \left ( \begin{array}{c}  \cos(\beta(\xi)) \cos(\alpha(\xi)) \\ \cos(\beta(\xi))\sin(\alpha(\xi)) \\  \sin(\beta(\xi)) \end{array}\right ) = \cos(\beta) n_0(\alpha(\xi)) + \sin(\beta(\xi)) e_3.
$$
Here
$$
n_0(\alpha) = \left ( \begin{array}{c}  \cos(\alpha) \\ \sin(\alpha) \\  0 \end{array}\right ) \,\,\,\mbox{and}\,\,\,  e_3 = \left ( \begin{array}{c}  0 \\ 0 \\  1 \end{array}\right ).
$$
We will also write
$$
m_0(\alpha) = \left ( \begin{array}{c}  -\sin(\alpha) \\ \cos(\alpha) \\  0 \end{array}\right ).
$$
A direct computation shows that
\begin{align*}
j(v) &= \left (n\frac{\partial n^T}{\partial \alpha} - \frac{\partial n}{\partial \alpha}n^T \right ) \nabla \alpha + \left (n\frac{\partial n^T}{\partial \beta} - \frac{\partial n}{\partial \beta}n^T \right ) \nabla \beta \\
&= \left (\cos^2(\beta)\Lambda + \cos(\beta)\sin(\beta) (e_3m_0(\alpha)^T - m_0(\alpha)e_3^T) \right )\nabla \alpha + \left (n_0(\alpha) e_3^T - e_3n_0(\alpha)^T \right ) \nabla \beta.
\end{align*}
We know that $\alpha(\xi) = \frac{\xi_2 + \alpha_1(\xi)}{2}$, so
$$
\nabla \alpha = \frac{e_2+\nabla \alpha_1}{2}.
$$
However, we also know from \cite{GM} that
$$
j(v) = \frac{e_2}{2}\Lambda + \nabla^\perp \psi.
$$
We deduce that
\begin{align*}
& \left (\cos^2(\beta)\Lambda + \cos(\beta)\sin(\beta) (e_3m_0(\alpha)^T - m_0(\alpha)e_3^T) \right )\nabla \alpha_1 + \left (n_0(\alpha) e_3^T - e_3n_0(\alpha)^T \right ) \nabla \beta \\
&= \left (\sin^2(\beta)\Lambda -  (\cos(\beta)\sin(\beta) (e_3m_0(\alpha)^T - m_0(\alpha)e_3^T) \right ) \frac{e_2}{2} + \nabla^\perp \psi,
\end{align*}
so that
$$
\cos^2(\beta) \abs{\nabla \alpha_1}^2 + \abs{\nabla \beta}^2 \leq C( \sin^2(\beta) + \abs{\nabla \psi}^2 ).
$$
Now recall that in the proof of Proposition \ref{latitude_longitude} we chose $\lambda>0$ so that $\sin^2(\beta) < \frac{1}{2}$ in $H_{-\lambda}$, so for $\xi \in H_{-\lambda}$ we have $\cos^2(\beta) > \frac{1}{2}$.  Finally, recall from Proposition \ref{latitude_control} that $\frac{\langle u, P_3\rangle }{\abs{x-a}^2}\in L^1(\Omega)$.  By our change of variables, this implies that $\langle v, P_3\rangle \in L^1(H_{-\lambda})$.  Since $\langle v, P_3\rangle = \sin^2(\beta)$ in $H_{-\lambda}$, this concludes the proof of this step.
\end{proof}

\medskip
\medskip

\noindent {\bf Step 2.}  We have $\nabla \alpha_1, \nabla \beta \in W^{1, 2}(H_{-\lambda})$ and
$$
\nabla \alpha_1(\xi_1, \xi_2) \to 0 \,\,\, \mbox{and}\,\,\, \nabla \beta(\xi_1, \xi_2)  \to 0
$$
as $\xi_1 \to -\infty$, both uniformly in $\xi_2 \in [-\pi, \pi]$.

\begin{proof}[Proof of Step 2.]  A direct computation shows that in $H_{-\lambda}$ we have
$$
\abs{\nabla v}^2 = \frac{\cos^2(\beta)}{2}\abs{e_2 + \nabla \alpha_1}^2 + 2\abs{\nabla \beta}^2.
$$
Since $u$ locally minimizes the Dirichlet integral in $\Omega \setminus \{a\}$, and our change of variables is holomorphic, we conclude that $v(\xi) = u(e^\xi)$ also locally minimizes the Dirichlet integral in $H_{-\lambda}$.  Because of this, $\alpha_1$ and $\beta$ satisfy
$$
\Delta \alpha_1 = 2\tan(\beta) (e_2 + \nabla \alpha_1 ) \cdot \nabla \beta
$$
and
$$
\Delta \beta =  -\frac{\sin(2\beta)}{8}\abs{e_2+\nabla \alpha_1}^2,
$$
respectively.  From here we find
\begin{align*}
\Delta \left ( \frac{\abs{\nabla \alpha_1}^2}{2}\right ) &= \abs{D^2 \alpha_1}^2  + 2\sec^2(\beta) \nabla \alpha_1 \cdot \nabla \beta (e_2+\nabla \alpha_1)\cdot \nabla \beta \\
&+ 2\tan(\beta) \sum_{k=1}^2 \frac{\partial \alpha_1}{\partial \xi_k} \nabla \left ( \frac{\partial \alpha_1}{\partial \xi_k} \right ) \cdot \nabla \beta \\
&+ 2\tan(\beta) \sum_{k=1}^2 \frac{\partial \alpha_1}{\partial \xi_k} (e_2 +  \nabla \alpha_1) \cdot \nabla \left ( \frac{\partial \beta}{\partial \xi_k} \right ) 
\end{align*}
and
\begin{align*}
\Delta \left ( \frac{\abs{\nabla \beta}^2}{2}\right ) &= \abs{D^2 \beta}^2  -\frac{\cos(2\beta)}{4}\left ( \abs{\nabla \alpha_1}^2 + 1 + 2\frac{\partial \alpha_1}{\partial \xi_2} \right ) \abs{\nabla \beta}^2 \\
&-\frac{\sin(2\beta)}{4} \sum_{k=1}^2 \frac{\partial \beta}{\partial \xi_k} \nabla \left ( \frac{\partial \alpha_1}{\partial \xi_k} \right ) \cdot (e_2 + \nabla \alpha_1).
\end{align*}
We conclude that
\begin{align*}
-\Delta \left ( \frac{\abs{\nabla \alpha_1}^2}{2} + \frac{\abs{\nabla \beta}^2}{2} \right ) &+ (1-\delta) ( \abs{D^2 \alpha_1}^2 + \abs{D^2 \beta}^2) \\
&\leq C(\delta) ( \abs{\nabla \alpha_1}^2+ \abs{\nabla \alpha_1}^4 + \abs{\nabla \beta}^2 + \abs{\nabla \beta}^4).
\end{align*}
We now follow Steps 2 and 3 of the proof of Proposition \ref{properties_psi} to complete the proof of this step.

\end{proof}

\noindent {\bf Step 3.}  $\nabla \alpha_1 \in L^1(H_{-\lambda})$.  By our change of variables, this implies that $\frac{\nabla \alpha_1}{\abs{x-a}} \in L^1(B_r(a))$.

\begin{proof}[Proof of Step 3.]  Recall from Proposition \ref{Hopf_Diff_split} that the Hopf differential of $u$ satisfies
$$
\omega_u(z) = -\frac{1}{8z^2}+ h(z),
$$
where $h$ is a holomorphic map in all of $\Omega$.  Now, by our change of variables from $\Omega$ to $U$, for the Hopf differential of $v$ we have
$$
\omega_v(\xi) = e^{2\xi} \omega_u(e^\xi) = -\frac{1}{8} + e^{2\xi}h(e^\xi).
$$
On the other hand, a direct computation in terms of $\alpha = \frac{\xi_2+\alpha_1}{2}$ and $\beta$ shows that
 \begin{align*}
j_{\C{}}(v) &= \left (n\frac{\partial n^T}{\partial \alpha} - \frac{\partial n}{\partial \alpha}n^T \right ) \frac{\partial \alpha}{\partial \xi} + \left (n\frac{\partial n^T}{\partial \beta} - \frac{\partial n}{\partial \beta}n^T \right ) \frac{\partial \beta}{\partial \xi} \\
&= \cos(\beta)\left (\cos(\beta)\Lambda + \sin(\beta) (e_3m_0(\alpha)^T - m_0(\alpha)e_3^T) \right )\frac{\partial \alpha}{\partial \xi} \\ &+ \left (n_0(\alpha) e_3^T - e_3n_0(\alpha)^T \right ) \frac{\partial \beta}{\partial \xi}.
\end{align*}
Since
$$
\frac{\partial \alpha}{\partial \xi} = -\frac{i}{4}+ \frac{\partial \alpha_1}{\partial \xi},
$$
we also have that
\begin{align*}
\omega_v(\xi) &= {\rm tr}\left ( \left ( j_{\C{}}(v)\right ) ^2 \right ) \\
&= 2\cos^2(\beta) \left ( -\frac{i}{4}+ \frac{\partial \alpha_1}{\partial \xi} \right )^2 + 2 \left ( \frac{\partial \beta}{\partial \xi} \right )^2 \\ &= -\frac{\cos^2(\beta)}{8} -i \cos^2(\beta)\frac{\partial \alpha_1}{\partial \xi} + 2 \cos^2(\beta) \left ( \frac{\partial \alpha_1}{\partial \xi} \right )^2 + 2 \left ( \frac{\partial \beta}{\partial \xi} \right )^2.
\end{align*}
We conclude that
$$
-i\cos^2(\beta) \frac{\partial \alpha_1}{\partial \xi} = -\frac{\sin^2(\beta)}{8} - 2 \cos^2(\beta) \left ( \frac{\partial \alpha_1}{\partial \xi} \right )^2 - 2 \left ( \frac{\partial \beta}{\partial \xi} \right )^2 + e^{2\xi}h(e^\xi).
$$
Now observe that Propositions \ref{latitude_control} and \ref{latitude_longitude}, along with our change of variables, imply that $\sin^2(\beta) \in L^1(H_{-\lambda})$, whereas $\nabla \alpha_1, \nabla \beta \in L^2(H_{-\lambda})$ by Step 1.  Since $\cos^2(\beta) \geq \frac{1}{2}$ in $H_{-\lambda}$, this concludes the proof of Step 3.
\end{proof}

\medskip
\medskip

\noindent {\bf Step 4.}  There is a constant $\alpha^* \in \R{}$ such that
$$
\alpha_1(\xi_1, \xi_2) \to  \alpha^* \,\,\,\mbox{as}\,\,\,  \xi_1 \to -\infty,
$$
uniformly in $\xi_2\in \R{}$.

\begin{proof}[Proof of Step 4.]  Let
$$
\overline{\alpha}_1(\xi_1) = \frac{1}{2\pi} \int_{-\pi}^\pi \alpha_1(\xi_1, \xi_2)\,d\xi_2,
$$
and observe that, for $\xi_{1, 1} < \xi_{1, 2} < -\lambda$, we have
$$
\overline{\alpha}_1(\xi_{1, 2}) - \overline{\alpha}_1(\xi_{1, 1}) = \frac{1}{2\pi} \int_{\xi_{1, 1}}^{\xi_{1, 2}} \left ( \int_{-\pi}^\pi \frac{\partial \alpha_1}{\partial \xi_1}(s, t)\,dt\right ) \,ds.
$$
Hence
$$
\abs{\overline{\alpha}_1(\xi_{1, 2}) - \overline{\alpha}_1(\xi_{1, 1})} \leq \frac{1}{2\pi} \int_{\xi_{1, 1}}^{\xi_{1, 2}} \int_{-\pi}^\pi \abs{\nabla \alpha_1}(s, t)\,dt \,ds.
$$
By Step 3, $\overline{\alpha}_1(\xi_1)$ is Cauchy as $\xi_1\to -\infty$.  Since $(\nabla \alpha_1)(\xi_1, \xi_2)\to 0$ as $\xi_1\to -\infty$, uniformly in $\xi_2\in \R{}$, this proves Step 4.

\end{proof}

\medskip
\medskip

\noindent {\bf Step 5.}  We have the lower bound
\begin{align*}
\int_{B_r(a) } e_\eps(u_\eps) &\geq I(r, \eps) - o(1) \\ &- C\int_{B_r(a)}\left ( \abs{\nabla \alpha_1}^2 + \abs{\nabla \beta}^2 + \frac{\abs{\nabla \alpha_1}}{\abs{x-a}} \right ) - \frac{C}{r^2}\int_{B_r(a)} (\alpha_1^2+\beta^2),
\end{align*}
where $o(1) \to 0$ as $\eps \to 0$.

\begin{proof}[Proof of Step 5.]  In this step we will work in $B_r(a)$.  Then, by Step 4, we can apply a fixed rotation to $u$ so as to obtain $\alpha_1(x) \to 0$ as $x\to a$.

\medskip
\medskip

To prove the claim in this step we build a suitable comparison map.  Recall that $v_\eps$ denotes the nearest point projection of $u_\eps$ on $\mathcal P$.  We will denote $\Pi(\omega)$ the nearest point projection of $\omega \in M^3_{s, 1}(\R{})$ onto $\mathcal P$, whenever this projection is well defined and unique.  Our comparison map is
$$
\omega_\eps(x) = \left \{  \begin{array}{cc}  u_\eps(x) & \mbox{for}\,\,\, \abs{x-a}< \frac{r}{2} \\  
 \left ( 4 - \frac{6\abs{x-a}}{r}\right ) u_\eps + \left ( \frac{6\abs{x-a}}{r} - 3 \right ) v_\eps & \mbox{for}\,\,\,\frac{r}{2} < \abs{x-a} < \frac{2r}{3} \\
 \Pi\left ( \left ( 5 - \frac{6\abs{x-a}}{r}\right ) v_\eps + \left ( \frac{6\abs{x-a}}{r} - 4 \right ) u \right ) & \mbox{for}\,\,\,\frac{2r}{3} < \abs{x-a} < \frac{5r}{6} \\
u\left ( \theta + \frac{6(r-\abs{x-a})}{r} \alpha_1; \frac{6(r-\abs{x-a})}{r} \beta \right ) &\mbox{for}\,\,\, \frac{5r}{6} < \abs{x-a} < r. 
 \end{array} \right .
$$
Note that $\omega_\eps$ has canonical flat data on $\partial B_r(a)$.  Hence
$$
\int_{B_r(a)} e_\eps(\omega_\eps) \geq I(r, \eps).
$$
We now estimate $\int_{B_r(a)} e_\eps(\omega_\eps)$ in each of the intervals for $\abs{x-a}$ that appear in the definition of $\omega_\eps$.

\medskip
\medskip

\noindent In the range $\abs{x-a} < \frac{r}{2}$, clearly we have
$$
\int_{B_{\frac{r}{2}}(a)} e_\eps(\omega_\eps) = \int_{B_{\frac{r}{2}}(a)} e_\eps(u_\eps).
$$

\medskip
\medskip

\noindent Let now $\frac{r}{2} < \abs{x-a} < \frac{2r}{3}$.  In this case we first observe that
$$
{\rm dist}(\omega_\eps, {\mathcal P}) \leq {\rm dist}(u_\eps, {\mathcal P}).
$$
Since 
$$
4W_\beta(\omega_\eps) \leq (1-\abs{\omega_\eps}^2)^2 \leq ({\rm dist}(\omega_\eps, {\mathcal P}))^2 \leq ({\rm dist}(u_\eps, {\mathcal P}))^2 \leq C W_\beta(u_\eps), 
$$
we obtain
$$
\int_{B_{\frac{2r}{3}}(a) \setminus B_{\frac{r}{2}}(a)} \frac{W_\beta(\omega_\eps)}{\eps^2} = o(1).
$$
This last claim follows from the end of the proof of Lemma 8 of \cite{GM}, which shows that minimizers $u_\eps$ satisfy
$$
\limsup_{\eps\to 0} \int_{\Omega\setminus B_r(a)} \frac{W(u_\eps)}{\eps^2} = 0.
$$

\medskip
\medskip

Next, from \cite{GM} we also know that
$$
\int_{B_{\frac{2r}{3}}(a) \setminus B_{\frac{r}{2}}(a)} \abs{\nabla v_\eps}^2 \leq \int_{B_{\frac{2r}{3}}(a) \setminus B_{\frac{r}{2}}(a)}\abs{\nabla u_\eps}^2 + o(1).
$$
All this gives us
$$
\int_{B_{\frac{2r}{3}}(a) \setminus B_{\frac{r}{2}}(a)} e_\eps(\omega_\eps) \leq \int_{B_{\frac{2r}{3}}(a) \setminus B_{\frac{r}{2}}(a)} e_\eps(u_\eps) + o(1).
$$

\medskip
\medskip

\noindent For the range $\frac{2r}{3} < \abs{x-a} < \frac{5r}{6}$, we first observe that
$$
\Pi\left ( \left ( 5 - \frac{6\abs{x-a}}{r}\right ) v_\eps + \left ( \frac{6\abs{x-a}}{r} - 4 \right ) u \right ) = \Pi\left ( u + \left ( 5 - \frac{6\abs{x-a}}{r}\right ) (v_\eps - u) \right ).
$$
Set
$$
z_{\eps, r} = u + \left ( 5 - \frac{6\abs{x-a}}{r}\right ) (v_\eps - u),
$$
so that $\omega_\eps= \Pi(z_{\eps.r})$.  We have
\begin{align*}
\frac{\partial \omega_\eps}{\partial x_k} &= (D\Pi)(z_{\eps, r})(\frac{\partial z_{\eps, r}}{\partial x_k}) \\ &= \frac{\partial u}{\partial x_k} + ((D\Pi)(z_{\eps, r}) - (D\Pi)(u)) ( \frac{\partial u}{\partial x_k} ) + (D\Pi)(z_{\eps, r})(\frac{\partial z_{\eps, r}}{\partial x_k} - \frac{\partial u}{\partial x_k}).
\end{align*}
Because $u_\eps \to u$ and $v_\eps \to u$ strongly in $B_r(a) \setminus B_{\frac{r}{2}}(a)$, using the facts described in the appendix we obtain
$$
\int_{B_{\frac{5r}{6}}(a) \setminus B_{\frac{2r}{3}}(a) } \abs{\nabla \omega_\eps}^2 \leq \int_{B_{\frac{5r}{6}}(a) \setminus B_{\frac{2r}{3}}(a) } \abs{\nabla u}^2 + o(1) \leq \int_{B_{\frac{5r}{6}}(a) \setminus B_{\frac{2r}{3}}(a) } \abs{\nabla u_\eps}^2 + o(1).
$$
Since $W_\beta(\omega_\eps) = 0$, all this gives us
$$
\int_{B_{\frac{5r}{6}}(a) \setminus B_{\frac{2r}{3}}(a) } e_\eps(\omega_\eps) \leq \int_{B_{\frac{5r}{6}}(a) \setminus B_{\frac{2r}{3}}(a) } e_\eps(u_\eps) + o(1).
$$

\medskip
\medskip

\noindent Finally, let $\frac{5r}{6} < \abs{x-a} < r$.  We observe that
\begin{align*}
\nabla \omega_\eps &= \left ( \nabla \theta + \frac{6(r-\abs{x-a})}{r} \nabla \alpha_1 - \frac{6}{r} \alpha_1 \,\hat{n} \right ) \frac{\partial u}{\partial \alpha} \\
&+ \left ( \frac{6(r-\abs{x-a})}{r} \nabla \beta - \frac{6}{r} \beta \,\hat{n} \right ) \frac{\partial u}{\partial \beta},
\end{align*}
where we use the notation $\hat{n} = \frac{x-a}{\abs{x-a}}$.  From this identity we obtain
\begin{align*}
\nabla \omega_\eps &=  \nabla u + \left ( \frac{(5r-6\abs{x-a})}{r} \nabla \alpha_1 - \frac{6}{r} \alpha_1 \,\hat{n} \right ) \frac{\partial u}{\partial \alpha} \\
&+ \left ( \frac{(5r-6\abs{x-a})}{r} \nabla \beta - \frac{6}{r} \beta \,\hat{n} \right ) \frac{\partial u}{\partial \beta}.
\end{align*}
Since $\langle \frac{\partial u}{\partial \alpha}, \frac{\partial u}{\partial \beta} \rangle = 0 $, it follows that
$$
\abs{\nabla \omega_\eps}^2 \leq \abs{\nabla u}^2 + C\int_{B_r(a)}\left ( \abs{\nabla \alpha_1}^2 + \abs{\nabla \beta}^2 + \frac{\abs{\nabla \alpha_1}}{\abs{x-a}} \right ) + \frac{C}{r^2}\int_{B_r(a)} (\alpha_1^2+\beta^2).
$$
Since $W_\beta(\omega_\eps) = 0$, we finally get
\begin{align*}
\int_{B_r(a) \setminus B_{\frac{5r}{6}}(a) } e_\eps(\omega_\eps) &\leq \int_{B_r(a) \setminus B_{\frac{5r}{6}}(a) } e_\eps(u_\eps) \\ &+ C\int_{B_r(a)}\left ( \abs{\nabla \alpha_1}^2 + \abs{\nabla \beta}^2 + \frac{\abs{\nabla \alpha_1}}{\abs{x-a}} \right ) + \frac{C}{r^2}\int_{B_r(a)} (\alpha_1^2+\beta^2).
\end{align*}
Putting together the estimates in the various ranges for $\abs{x-a}$, we conclude the proof of this step.
\end{proof}

\noindent {\bf Step 6.}  We have the upper bound
\begin{align*}
\int_{B_r(a)} e_\eps(u_\eps) &\leq I(r, \eps) + o(1)  \\ &+ C\int_{B_r(a)}\left ( \abs{\nabla \alpha_1}^2 + \abs{\nabla \beta}^2 + \frac{\abs{\nabla \alpha_1}}{\abs{x-a}} \right ) + \frac{C}{r^2}\int_{B_r(a)} (\alpha_1^2+\beta^2).
\end{align*}

\begin{proof}[Proof of Step 6.]  Let $\zeta_\eps$ be a minimizer of LdG in $B_r(a)$ with canonical flat data, and define
$$
\omega_\eps(x) = \left \{  \begin{array}{cc} \zeta_\eps & \abs{x-a} < r \\ u\left ( \theta + \left ( \frac{3\abs{x-a}}{r} - 3 \right ) \alpha_1; \left ( \frac{3\abs{x-a}}{r} -3\right ) \beta \right ) & \mbox{for}\,\,\, r < \abs{x-a}< \frac{4r}{3} \\  
 \Pi\left ( \left ( 5 - \frac{3\abs{x-a}}{r}\right ) u + \left ( \frac{3\abs{x-a}}{r} - 4 \right ) v_\eps \right ) & \mbox{for}\,\,\,\frac{4r}{3} < \abs{x-a} < \frac{5r}{3} \\
\left ( 6 - \frac{3\abs{x-a}}{r}\right ) v_\eps + \left ( \frac{3\abs{x-a}}{r} - 5 \right ) u_\eps &\mbox{for}\,\,\, \frac{5r}{3} < \abs{x-a} < 2r. 
 \end{array} \right .
$$
Note that $\zeta_\eps = u_\eps$ on $\partial B_{2r}(a)$.  Since $u_\eps$ minimizes the LdG energy with respect to its own boundary data, we obtain
$$
\int_{B_{2r}(a)} e_\eps(\zeta_\eps) \geq \int_{B_{2r}(a)} e_\eps(u_\eps).
$$
Furthermore, by definition we have
$$
\int_{B_{r}(a)} e_\eps(\zeta_\eps) = I(r, \eps).
$$
To conclude we follow essentially the same strategy we used in Step 5 to show that
\begin{align*}
\int_{B_{2r}(a)\setminus B_{r}(a)} e_\eps(\zeta_\eps) &\leq \int_{B_{2r \setminus B_{r}(a)}(a)} e_\eps(u_\eps) + o(1) \\
&+C\int_{B_{2r}(a)}\left ( \abs{\nabla \alpha_1}^2 + \abs{\nabla \beta}^2 + \frac{\abs{\nabla \alpha_1}}{\abs{x-a}} \right ) + \frac{C}{r^2}\int_{B_{2r}(a)} (\alpha_1^2+\beta^2).
\end{align*}
This concludes the proof of the theorem.
\end{proof}\renewcommand{\qedsymbol}{}
\end{proof}

\section{Estimates away from the singularity}

In this section we prove Theorem \ref{current_vector}.  Recall that we are assuming $a=0$, and denote the zero set of the Hopf differential $\omega_u$ of $u$ by
$$
Z_{\omega_u}= \{z\in \Omega\setminus \{a\} : \omega_0(z) = 0\}.
$$
From Proposition \ref{Hopf_Diff_split} we have
$$
\omega_u = -\frac{1}{8z^2} + h,
$$
where $h$ is holomorphic in $\Omega$.  Our main result in this section gives an expression for $j(u)$ in the case of $Z_{\omega_u}=\emptyset$.  In this situation the function $1+8z^2h$ does not vanish in $\Omega$.  Because $\Omega$ is simply-connected, we can extract a square root of $1+8z^2h_0$, and hence of $\omega_0$.  For convenience, let $\mu_u$ be a (necessarily meromorphic) function that satisfies $-2\mu_u^2=\omega_u$.  We observe, however, that when $Z_{\omega_u}\neq \emptyset$, the conclusions are still valid, but only locally away from $Z_{\omega_u}$.

\begin{proof}[Proof of Theorem \ref{current_vector}.]

Consider the exponential map $e : \C{}\to \C{}\setminus \{0\}$, and let
\begin{equation}\label{def_H}
H_{\Omega^*} = e^{-1}(\Omega \setminus \{0\}).
\end{equation}
In other words, $H_{\Omega^*}$ is the lift of the punctured domain $\Omega\setminus \{0\}$.  It is well known that $H_{\Omega^*}$ along with the (complex) exponential map is the universal covering space of $\Omega\setminus \{0\}$.  Any map $u:\Omega\setminus\{0\}\to X$, into any topological space, defines a map $v:H_{\Omega^*} \to X$ via $v(\xi)=u(e^\xi)$.  Observe that $v$ is $2\pi i$-periodic in $H_{\Omega^*}$.  On the other hand, any map $v:H_{\Omega^*}\to X$, into the topological space $X$, that is $2\pi i$-periodic, induces a unique map $u:\Omega\setminus\{0\}\to X$ such that $v(\xi)=u(e^\xi)$.  We will abuse the notation and say that a map $v:H_{\Omega}\to X$, that is {\it not} $2\pi i$-periodic, is a multivalued map in $\Omega\setminus \{0\}$.  

Next, for a fixed $P \in \mathcal P$, we define
$$
Q_P(A) = AP+PA
$$
for any $A\in M^3(\R{})$.  It turns out that
$$
Q_P:M_a^3(\R{})\to M_a^3(\R{})
$$
is an orthogonal projection with respect to the inner product $\langle A, B \rangle = {\rm tr}(B^TA)$ for $A, B \in M^3_a(\R{})$.  Denoting the image of this projection by $A_P$, it is easy to check that
$$
Q_P:T_P {\mathcal P} \to A_P
$$
is an isomorphism.

\medskip
\medskip

Let now $u:\Omega\setminus \{a\} \to \mathcal P$ be a limit of minimizers and lift it to $H_{\Omega^*}$ through $v(\xi)=u(e^\xi)$.  Recall that $v$ is $2\pi i$-periodic in $H_{\Omega^*}$.  As before, we define
$$
j(v)=\left [v; \frac{\partial v}{\partial \xi} \right ].
$$
We now recall that $u$ satisfies a minimality condition.  Since the change of variables to go from $\Omega\setminus \{a\}$ to $H_{\Omega^*}$ is conformal, $v$ also satisfies a minimality condition so that
\begin{equation}\label{laplcian_v_commute}
[v; \Delta v]=0.
\end{equation}
This, plus the fact that $v$ is $\mathcal P$-valued, implies that
\begin{equation}\label{current_derivative}
\frac{\partial j(v)}{\partial \bar{\xi}} = -\left [ \overline{j(v)}; j(v)\right ].
\end{equation}
We next recall that the map $L_P : SO(3)\to \mathcal P$, where $P \in \mathcal P$ is fixed, defined through
$$
L_p(R) = RPR^T,
$$
is onto.  It is not, however, even locally injective.  In fact, its stabiliser
$$
O_P(3) = \{R\in SO(3): RPR^T=P\},
$$
is non-trivial.  Although $L_P$ is not a covering map, we can still lift $v$ with some $R:H_{\Omega^*} \to SO(3)$, so that
$$
v = RPR^T.
$$
This can be seen by building the map $R$ locally around any $v(\xi)$, $\xi \in H_{\Omega^*}$, and extending it.

\medskip
\medskip

The lifting $R$ need not be unique.  However, since $v(\xi + 2\pi i)=v(\xi)$, we must have
$$
R^T(\xi)R(\xi+2\pi i)\in O_P(3)
$$
for all $\xi \in H_{\Omega^*}$.  Next observe that, since $P$ is constant,
$$
\frac{\partial v}{\partial \xi} = \frac{\partial R}{\partial \xi}PR^T + RP\frac{\partial R^T}{\partial \xi}.
$$
Now $R$ takes values in $SO(3)$.  Hence
$$
{\rm Re}\left (R^T\frac{\partial R}{\partial \xi}\right ), {\rm Im}\left (R^T\frac{\partial R}{\partial \xi}\right ) \in M_a^3(\R{}).
$$
Since $P \in \mathcal P$, we obtain
$$
j(v) = R\left ( P\frac{\partial R^T}{\partial \xi}R - R^T\frac{\partial R}{\partial \xi}P \right )R^T.
$$
Denote
$$
B(\xi) = \frac{\partial R^T}{\partial \xi}R.
$$
If we set
$$
\beta(\xi) = Q_P(B),
$$
then so far we only have that
$$
j(v) = R\beta R^T.
$$
We will prove next that, in fact,
$$
\frac{\partial \beta}{\partial \bar{\xi}} = \left [ \overline{B-\beta}; \beta\right ].
$$
To this end, we observe that $R:H_{\Omega^*}\to O(3)$ has real entries.  Hence
$$
\overline{\frac{\partial R}{\partial \xi}} = \frac{\partial R}{\partial \bar{\xi}}.
$$
From here we obtain
$$
\frac{\partial j(v)}{\partial \bar{\xi}} = R\left ( \frac{\partial \beta}{\partial \bar{\xi}} + R^T\frac{\partial R}{\partial \bar{\xi}}\beta + \beta \frac{\partial R^T}{\partial \bar{\xi}} R\right )R^T = R\left ( \frac{\partial \beta}{\partial \bar{\xi}} -[\bar{B}; \beta]\right )R^T,
$$
where again we used the fact that $R$ has real entries.  Next, we already know that
$$
\frac{\partial j(v)}{\partial \bar{\xi}} = \left [ \frac{\partial v}{\partial \bar{\xi}}; \frac{\partial v}{\partial \xi}\right ] = -\left [ \overline{j(v)}; j(v)\right ] = -R[\bar{\beta}; \beta]R^T.
$$
This proves
$$
\frac{\partial \beta}{\partial \bar{\xi}} = \left [ \overline{B-\beta}; \beta\right ].
$$

\medskip
\medskip

Let now $\Lambda_1 \in M_A^3(\R{})$ be (constant and) such that $Q_P(\Lambda_1)=0$ and
$$
[\Lambda_1; [\Lambda_1; A]] = -Q_P(A) \,\,\, \mbox{for all}\,\,\, A\in M_A^3(\R{}).
$$
Let also $\Lambda_2, \Lambda_3 \in A_P$ be such that $\{\Lambda_1, \Lambda_2, \Lambda_3\}$ is an orthogonal basis in $M_a^3(\R{})$ with the additional property
$$
[\Lambda_1, \Lambda_2]=\Lambda_3, [\Lambda_2, \Lambda_3]=\Lambda_1 \,\,\,\mbox{and}\,\,\,[\Lambda_3, \Lambda_1]=\Lambda_2.
$$
By the definition of $\beta$, there is a function $\alpha_1 : H_{\Omega^*}\to \C{}$ such that
$$
B(\xi) = \alpha_1 \Lambda_1 + \beta.
$$
What we know so far can be expressed as
$$
\frac{\partial \beta}{\partial \bar{\xi}} = \overline{\alpha}_1T_1( \beta),
$$
where $T_1(B) = T_{\Lambda_1}(B) = [\Lambda_1; B]$.

\medskip
\medskip

Let now $a_1 : H_{\Omega^*}\to \C{}$ be such that
$$
\frac{\partial a_1}{\partial \bar{\xi}}=\bar{\alpha_1}.
$$
To see that such $a_1$ should exist, we write $\alpha_1 = s + it$, and seek real-valued functions $f,g$ such that
$$
\frac{\partial ^2}{\partial \xi\, \partial \bar{\xi}} f = s, \,\,\, \frac{\partial ^2}{\partial \xi\, \partial \bar{\xi}} g = t.
$$
Such $f$ and $g$ always exist in a half-plane by Theorem 3.6.4 in \cite{Horm}.  Since $H_{\Omega^*}$ is conformal to a half space, $f$ and $g$ also exist in $H_{\Omega^*}$.  With these functions, we set
$$
a_1 = \frac{\partial f}{\partial \xi} - i\frac{\partial g}{\partial \xi}.
$$
By construction,
$$
\frac{\partial a_1}{\partial \bar{\xi}}=\overline{\alpha}_1.
$$
But then
$$
\frac{\partial \beta}{\partial \bar{\xi}} = \frac{\partial a_1}{\partial \bar{\xi}}[\Lambda_1,  \beta],
$$
and we deduce that
$$
\frac{\partial}{\partial \bar{\xi}} \left (   e^{-a_1\Lambda_1} \beta  e^{a_1\Lambda_1} \right ) = 0,
$$
where $e^{a_1\Lambda_1}$ is the standard exponential of a matrix.  Because of the definition of $\Lambda_2$, $\Lambda_3$, there are holomorphic functions $z_2, z_3 : H_{\Omega^*} \to \C{}$ such that
\begin{equation}\label{beta_first_expr}
\beta = e^{a_1 \Lambda_1 }(z_2\Lambda_2 + z_3 \Lambda_3) e^{-a_1\Lambda_1}.
\end{equation}
Define now
\begin{equation}\label{def_omega_v}
\omega_v(\xi) = \langle j(v) , j(v)\rangle.
\end{equation}
One checks directly that
$$
\omega_v = 2(z_1^2 + z_2^2).
$$

\medskip
\medskip

Let now $\mu_v$ be a holomorphic map such that $2\mu_v^2 = \omega_v$.  Lemma \ref{lemmita} allows us then to find a holomorphic $\zeta$ such that
$$
z_2 = \mu_v \cos(\zeta), \,\,\,  z_3 = \mu_v \sin(\zeta).
$$
This, in particular, shows that
\begin{equation}
    \label{eq:22}
    z_2\Lambda_2 + z_3 \Lambda_3 = \mu_v e^{\zeta T_1}(\Lambda_2),
\end{equation}
where $T_1(A) = [\Lambda_1, A]$.  Along with \eqref{beta_first_expr} the equation \eqref{eq:22} 
gives
\begin{equation}\label{beta_second_expr}
\beta = \mu_v e^{(a_1+\zeta)T_1}(\Lambda_2).
\end{equation}

\medskip
\medskip

We now observe the following: if $f:H_{\Omega^*}\to \R{}$ is any function, then
$$
S=Re^{f\Lambda_1} \in O(3)
$$
also satisfies
$$
v = SPS^T.
$$
Furthermore, setting
$$
B_S(\xi) = \frac{\partial S^T}{\partial \xi}S,
$$
it is not hard to see that
$$
\beta_S = Q_P(B_S) = S^TR\beta R^TS = e^{-f\Lambda_1}\beta e^{f\Lambda_1} = e^{-fT_1}(\beta),
$$
and $e^{f\Lambda_1}$ is the exponential of a matrix, whereas $e^{fT_1}$ is the exponential of an operator in $M_a^3(\R{})$ (which incidentally can also be written as a $3\times 3$ matrix with respect to the appropriate basis in $M_a^3(\R{})$).  This and \eqref{beta_second_expr} yield
$$
\beta_S = \mu_v e^{(a_1+\zeta-f)T_1}(\Lambda_2).
$$
Since the function $f$ is arbitrary, except for the fact that it must be real-valued, we set 
$$
f={\rm Re}(a_1+\zeta), \,\,\,\,   g={\rm Im}(a_1+\zeta).
$$
We conclude that
$$
\beta_S = \mu_v e^{igT_1}(\Lambda_2) = \mu_v ( \cosh(g) \Lambda_2 + i\sinh(g)\Lambda_3).
$$
For the final conclusion we notice that a direct computation shows that
$$
B_S = -i\frac{\partial g}{\partial \xi}\Lambda_1 + \beta_S.
$$
Recall now that
$$
B_S = \frac{\partial S^T}{\partial \xi}S,
$$
and that $S$ has real entries.  Because of this
$$
{\rm Im}\left (  \frac{\partial^2 S^T}{\partial \xi\, \partial \bar{\xi}} \right ) = 0.
$$
Since
$$
\frac{\partial S^T}{\partial \xi} = B_SS^T,
$$
we obtain from here that
$$
{\rm Im}\left ( \left (\frac{\partial B_S}{\partial \bar{\xi}} + B_S\overline{B_S}\right ) S^T\right ) = 0.
$$
Observing that
$$
B_S\overline{B_S} = \frac{1}{2}[B_s; \overline{B_S}] + \frac{1}{2}(B_S\overline{B_S}+\overline{B_S}B_S),
$$
and
$$
{\rm Im}\left ( \frac{1}{2}(B_S\overline{B_S}+\overline{B_S}B_S)\right ) = 0,
$$
we obtain
$$
{\rm Im}\left (\frac{\partial B_S}{\partial \bar{\xi}} + \frac{1}{2}[B_S;\overline{B_S}]\right ) = 0.
$$
Inserting everything we have obtained so far into this last equation we obtain
$$
-\frac{\partial ^2 g}{\partial \xi\,\partial \bar{\xi}} = \frac{\abs{\mu_v}^2}{2}\sinh(2g) = \frac{\abs{\omega_v}}{4}\sinh(2g).
$$
Lastlly, we observe that
$$
[\overline{j(v)}, j(v)] = S[\overline{\beta_S}, \beta_S]S^T = \frac{\abs{\omega_v}}{4}\sinh(2g) \Gamma_1,
$$
because $\Gamma_j = S\Lambda_j S^T$.  The conclusions of the theorem now follow by changing variables back from $H_{\Omega^*}$ to $\Omega$.

\end{proof}
We now present the proof of a simple lemma that we used during this proof.
\begin{lemma}\label{lemmita}
Let $D \subset \C{}$ be a simply-connected open set.  For any two holomorphic functions $a$, $b$ in $D$ that satisfy
$$
a^2+b^2 = 1 \,\,\, \mbox{in}\,\,\, D,
$$
there is a holomorphic function $\beta$ in $D$ such that
$$
a = \cos(\beta) \,\,\, \mbox{and}\,\,\, b=\sin(\beta).
$$
\end{lemma}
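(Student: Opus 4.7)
The idea is to reduce the identity $a^2+b^2=1$ to a factorization and then take a holomorphic logarithm. First I would set $f = a+ib$ and $\tilde f = a-ib$, both holomorphic in $D$. The hypothesis $a^2+b^2=1$ can be rewritten as $f\,\tilde f = 1$, which shows in particular that $f$ is a nowhere-vanishing holomorphic function on $D$, and $\tilde f = 1/f$.

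Next, since $D$ is simply connected and $f$ never vanishes, standard complex analysis guarantees the existence of a holomorphic branch $L$ of $\log f$ on $D$ (for instance, by integrating $f'/f$ along paths from a fixed basepoint; the integral is well-defined because $D$ is simply connected). I then define the holomorphic function
\[
\beta = -i\,L = -i\log(a+ib).
\]
By construction $e^{i\beta} = f = a+ib$, and hence $e^{-i\beta} = 1/f = \tilde f = a-ib$.

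Finally, adding and subtracting these two identities yields
\[
\cos\beta = \tfrac{1}{2}\bigl(e^{i\beta}+e^{-i\beta}\bigr) = \tfrac{1}{2}\bigl((a+ib)+(a-ib)\bigr) = a,
\]
\[
\sin\beta = \tfrac{1}{2i}\bigl(e^{i\beta}-e^{-i\beta}\bigr) = \tfrac{1}{2i}\bigl((a+ib)-(a-ib)\bigr) = b,
\]
as required. There is essentially no obstacle here: the only nontrivial ingredient is the existence of a holomorphic logarithm on a simply-connected domain, which is precisely where the hypothesis on $D$ is used. Note that $\beta$ is determined up to an additive constant in $2\pi\mathbb{Z}$, reflecting the choice of branch of $\log f$.
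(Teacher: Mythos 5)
Your proof is correct, and it takes a genuinely different and noticeably cleaner route than the one in the paper. You observe that $a^2+b^2=(a+ib)(a-ib)=1$ forces $f=a+ib$ to be a nowhere-vanishing holomorphic function, invoke the existence of a holomorphic logarithm of $f$ on the simply connected domain $D$, and set $\beta=-i\log f$; the Euler identities then give $\cos\beta=a$ and $\sin\beta=b$ in one line. The paper instead differentiates the constraint $a^2+b^2=1$, introduces the holomorphic quotient $\alpha=-\frac{1}{a}\frac{\partial b}{\partial z}=\frac{1}{b}\frac{\partial a}{\partial z}$, recasts the relation as a first-order linear ODE
$\frac{\partial}{\partial z}\left(\begin{smallmatrix}a\\ b\end{smallmatrix}\right)=\alpha T_0\left(\begin{smallmatrix}a\\ b\end{smallmatrix}\right)$
with $T_0$ the rotation generator, integrates via a matrix exponential $e^{\beta_1 T_0}$, and then must separately determine the constant vector $(c_1,c_2)$ with $c_1^2+c_2^2=1$ and solve $c_1=\cos\beta_2$, $c_2=\sin\beta_2$. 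Both proofs rely on simple connectivity in the same place (existence of a holomorphic primitive), but your factorization argument avoids the ODE machinery, the case analysis for the constants, and the quadratic-equation detour; the paper's ODE-based approach is more in the spirit of the integrable-systems viewpoint used elsewhere in the article, which is likely why the authors chose it, but for this lemma in isolation your version is shorter and more transparent.
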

\begin{proof}
Differentiating $a^2+b^2=1$ we obtain
$$
a\frac{\partial a}{\partial z} + b\frac{\partial b}{\partial z} = 0.
$$
However, $a$ and $b$ cannot be zero simultaneously.  Hence, at least one of the sides of the identity
$$
-\frac{1}{a}\frac{\partial b}{\partial z} = \frac{1}{b}\frac{\partial a}{\partial z}
$$
always makes sense, and is holomorphic.  Set
$$
\alpha= -\frac{1}{a}\frac{\partial b}{\partial z} = \frac{1}{b}\frac{\partial a}{\partial z}.
$$
Observe next that
$$
\frac{\partial}{\partial z}\left ( \begin{array}{c}a\\b\end{array}\right ) = \alpha \left ( \begin{array}{c}-b\\a\end{array}\right ) = \alpha \left ( \begin{array}{cc}0&-1\\1&0 \end{array}\right )\left ( \begin{array}{c}a\\b\end{array}\right ).
$$
Call
$$
T_0=\left ( \begin{array}{cc}0&-1\\1&0 \end{array}\right ),
$$
and let $\beta_1$ be any holomorphic antiderivative of $\alpha$.  What we know so far can be written as
$$
\frac{\partial}{\partial z} \left (e^{-\beta_1T_0}\left ( \begin{array}{c}a\\b\end{array}\right ) \right ) = 0.
$$
Since the expression inside the derivative above is holomorphic, then the expression is constant.  In other words, there are complex constants $c_1, c_2 \in\C{}$ such that
$$
\left ( \begin{array}{c}a\\b\end{array}\right ) = e^{\beta_1T_0}\left ( \begin{array}{c}c_1\\c_2\end{array}\right )= \left ( \begin{array}{c}c_1\cos(\beta_1) - c_2\sin(\beta_1)\\c_1\sin(\beta_1)+c_2\cos(\beta_1)\end{array}\right ).
$$
Observe next that
$$
1=a^2+b^2=c_1^2+c_2^2.
$$
We finish the proof by picking a constant $\beta_2 \in\C{}$ with
$$
c_1 = \cos(\beta_2) \,\,\, \mbox{and}\,\,\, c_2=\sin(\beta_2).
$$
This will imply that
$$
\left ( \begin{array}{c}a\\b\end{array}\right ) = \left ( \begin{array}{c}\cos(\beta_1+\beta_2)\\\sin(\beta_1+\beta_2)\end{array}\right ).
$$
Setting $\beta=\beta_1+\beta_2$ we obtain the conclusion of the Lemma.

To show that we can pick $\beta_2$, we first observe that this is easy to do if either $c_1=0$ or $c_2=0$.  If neither is $0$, let us choose first $\beta_3$ such that $\cos(\beta_3) = c_1$.  To do this recall that
$$
\cos(\beta_3) = \frac{e^{i\beta_3}+e^{-i\beta_3}}{2},
$$
so the equation $c_1=\cos(\beta_3)$
is equivalent to the equation
$$
e^{2i\beta_3}-2c_1e^{i\beta_3}+1=0.
$$
This is a quadratic equation for $\lambda=e^{i\beta_3}$.  Its solutions are
$$
\lambda=\frac{2c_1\pm \sqrt{4c_1^2-4}}{2},
$$
and it is easy to check that neither of these can be $0$.  Since the image of the exponential map is $\C{}\setminus \{0\}$, there always is a $\beta_3$ such that $e^{i\beta_3}=\lambda$.

With this choice of $\beta_3$ we have $c_1=\cos(\beta_3)$.  Now this implies that
$$
1=c_1^2+c_2^2 = \cos^2(\beta_3) + c_2^2 = 1-\sin^2(\beta_3)+c_2^2.
$$
We conclude that either $c_2=\sin(\beta_3)$ or $c_2 = -\sin(\beta_3)$.  In the first case we set $\beta_2=\beta_3$ and we are done.  In the second case we set $\beta_2=-\beta_3$.  Since $\cos(\beta_2) = \cos(-\beta_3)=\cos(\beta_3)$, we are done in this case as well.

\end{proof}

\medskip
\medskip
\medskip
\medskip

\section{Numerical Simulations}
To visualize the results established in the preceding sections, we simulated the gradient flow for the energy functional \eqref{LdGEnergy} using the off-the-shelf finite element analysis solver COMSOL \cite{comsol} in order to arrive at local minimizers of \eqref{LdGEnergy}. The computations were performed in a square domain $\Omega$ with the side of length $1$, assuming that $\varepsilon=0.01$ and using the boundary data of degree $1/2$ with values deviating from a geodesic in $\mathcal P$.

Figs.~\ref{fig:6}-\ref{fig:4} show the eigenvalues and eigenvectors fields for the computed (local) minimizer $u_\varepsilon$ of \eqref{LdGEnergy}. Because the degree of $u_\varepsilon$ on the boundary is equal to $1/2$, there is a single point in $\Omega$ where the eigenvalues of $u_\varepsilon$ should cross and this point should be located near the singularity of the limiting map $u$. To make the subsequent discussion simpler, we will identify the eigenvalues crossover point of $u_\varepsilon$ with the singular point of $u$ in what follows. 

In agreement with Proposition \ref{convergence_geodesic_third_eval}, the third eigenvalue of $u_\varepsilon$ is asymptotically close to $0$, while the corresponding eigenvector field is smooth {\it everywhere} in $\Omega$, including the singular point of $u$. The first and the second eigenvalues of $u_\varepsilon$ are equal to $1$ and $0$, respectively, away from the singularity of $u$, while at the core of the singularity both of these eigenvalues are close to $1/2$.  
\begin{figure}[H]
    \centering
    \includegraphics[scale=.3]{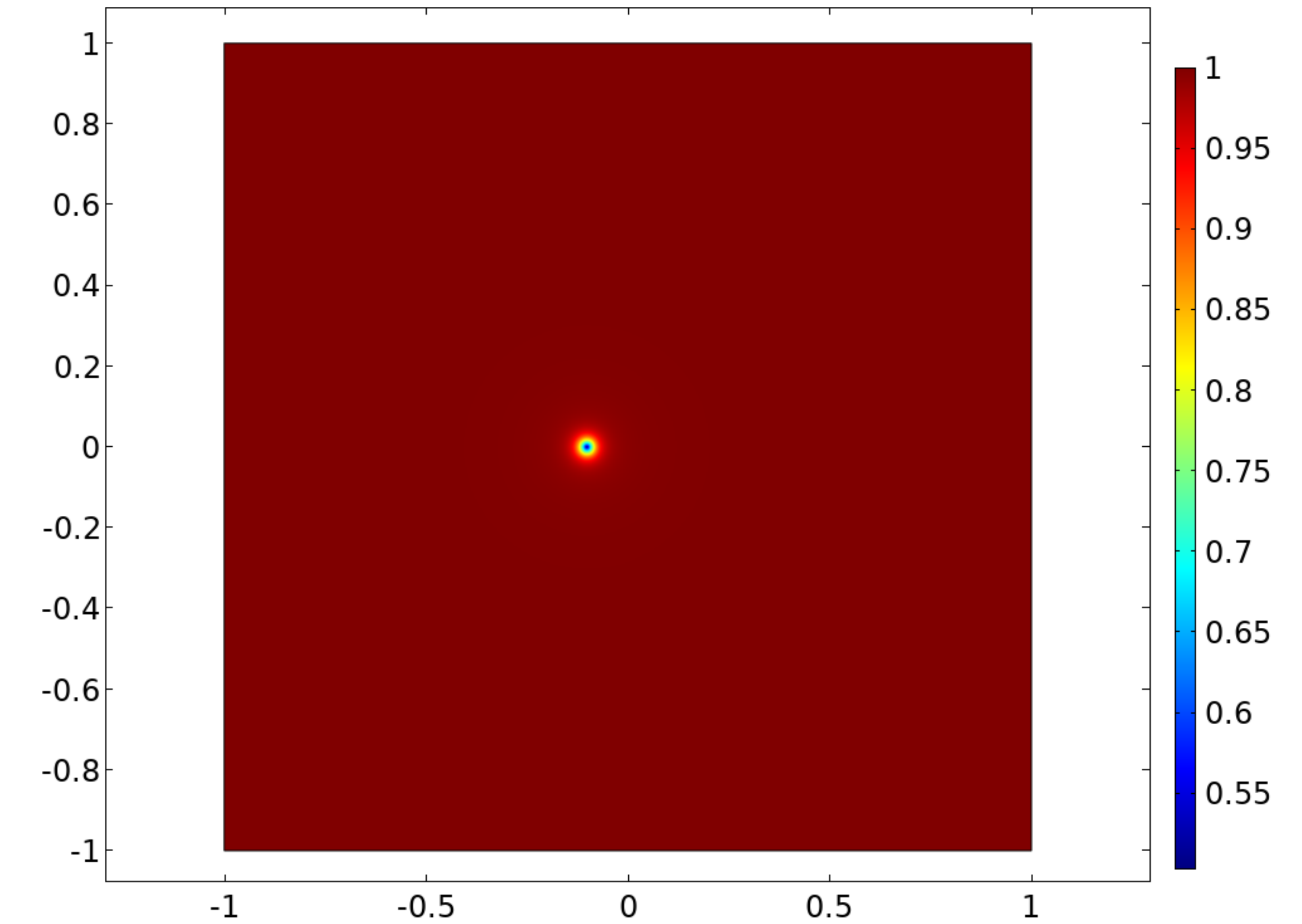}
    \includegraphics[scale=.95]{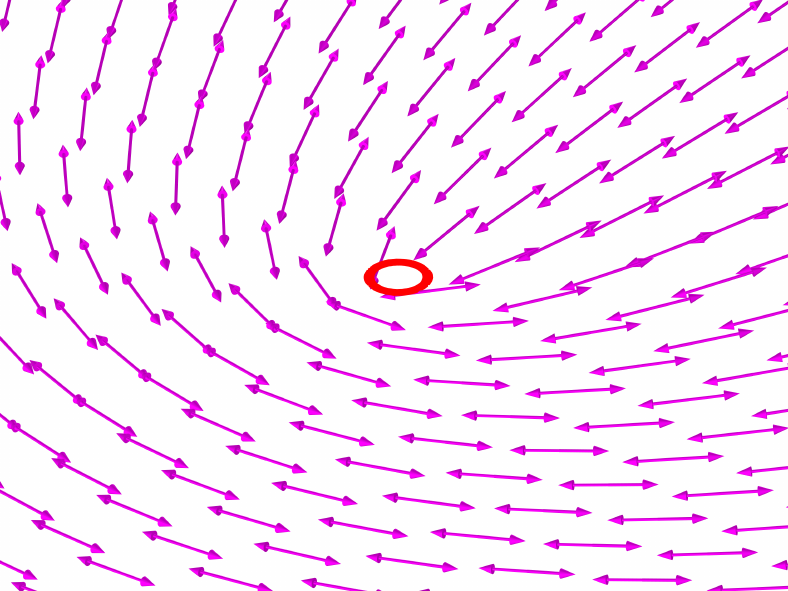}
    \caption{Eigenvalue $\lambda_1$ (left) and eigenvector ${\mathbf e}_1$ (right) of the minimizer $u_\varepsilon$ of \eqref{LdGEnergy}. The vector field plot zooms in on the region near the singularity of $u$, represented by a red circle.}
    \label{fig:6}
\end{figure}
Further, both the first and the second eigenvectors of $u_\varepsilon$  have degree $1/2$ singularity at the singular point of $u$---this is an expected behavior because $u_\varepsilon$ near the singularity is essentially an $\mathbb{RP}^1$-valued map.
\begin{figure}[H]
    \centering
    \includegraphics[scale=.3]{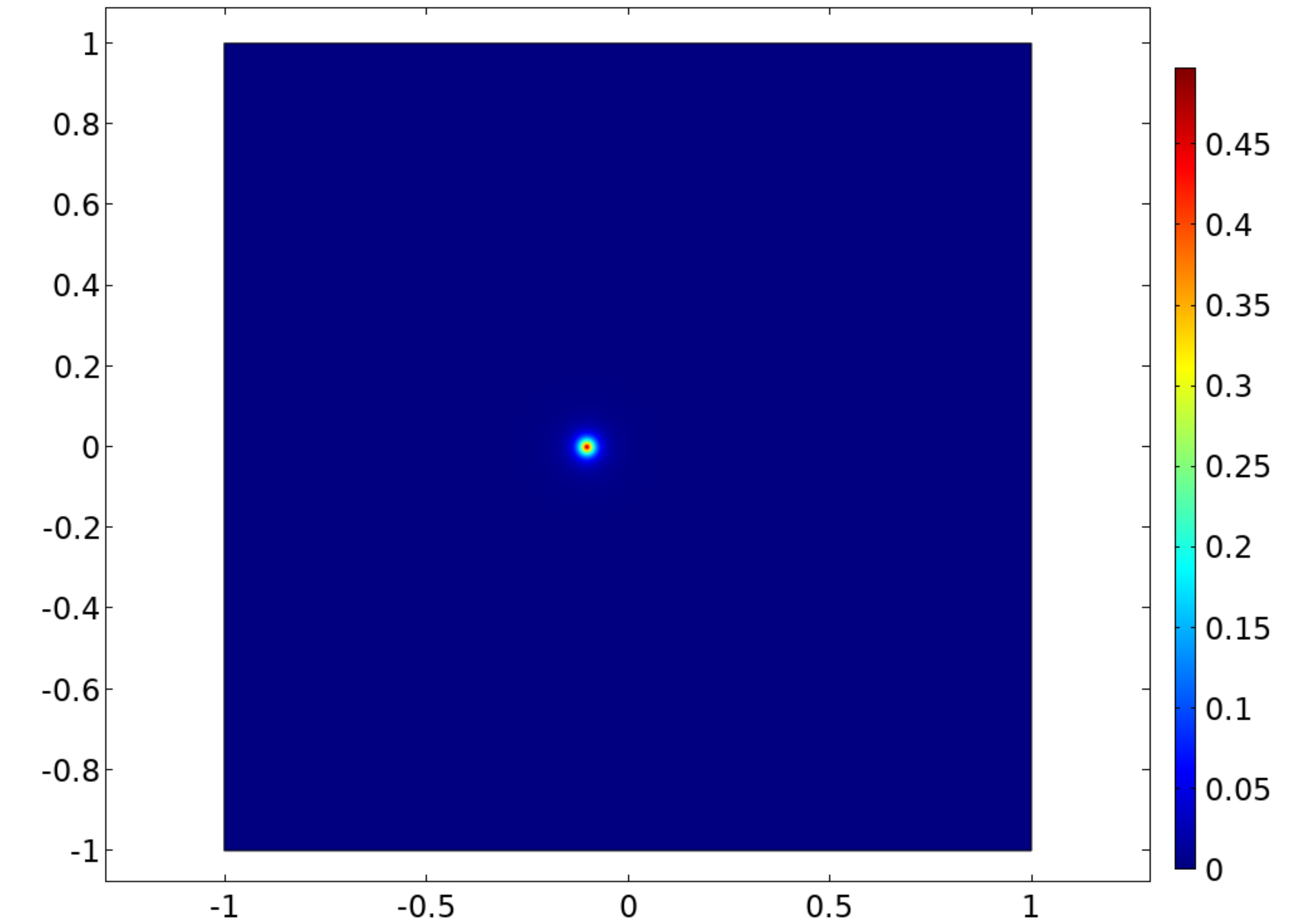}
    \includegraphics[scale=.95]{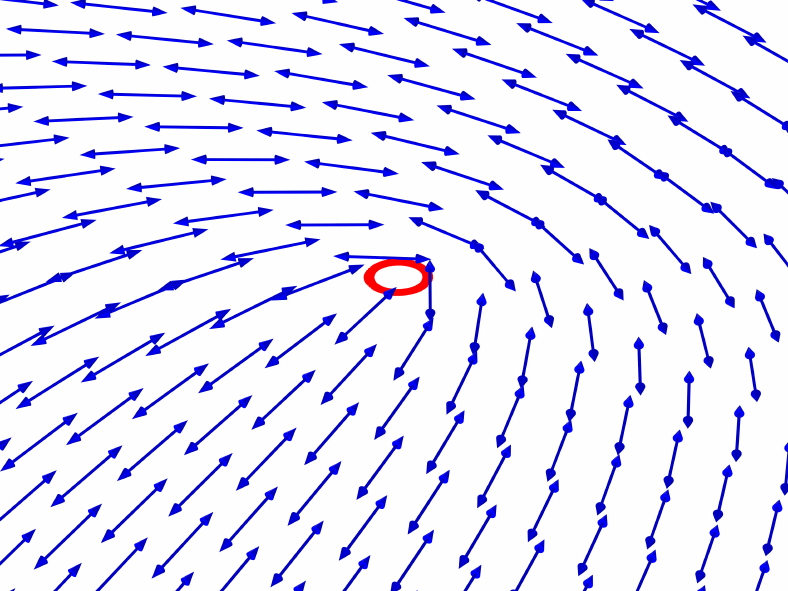}
    \caption{Eigenvalue $\lambda_2$ (left) and eigenvector ${\mathbf e}_2$ (right) of the minimizer $u_\varepsilon$ of \eqref{LdGEnergy}. The vector field plot zooms in on the region near the singularity of $u$, represented by a red circle.}
    \label{fig:5}
\end{figure}
\begin{figure}[H]
    \centering
    \includegraphics[scale=.3]{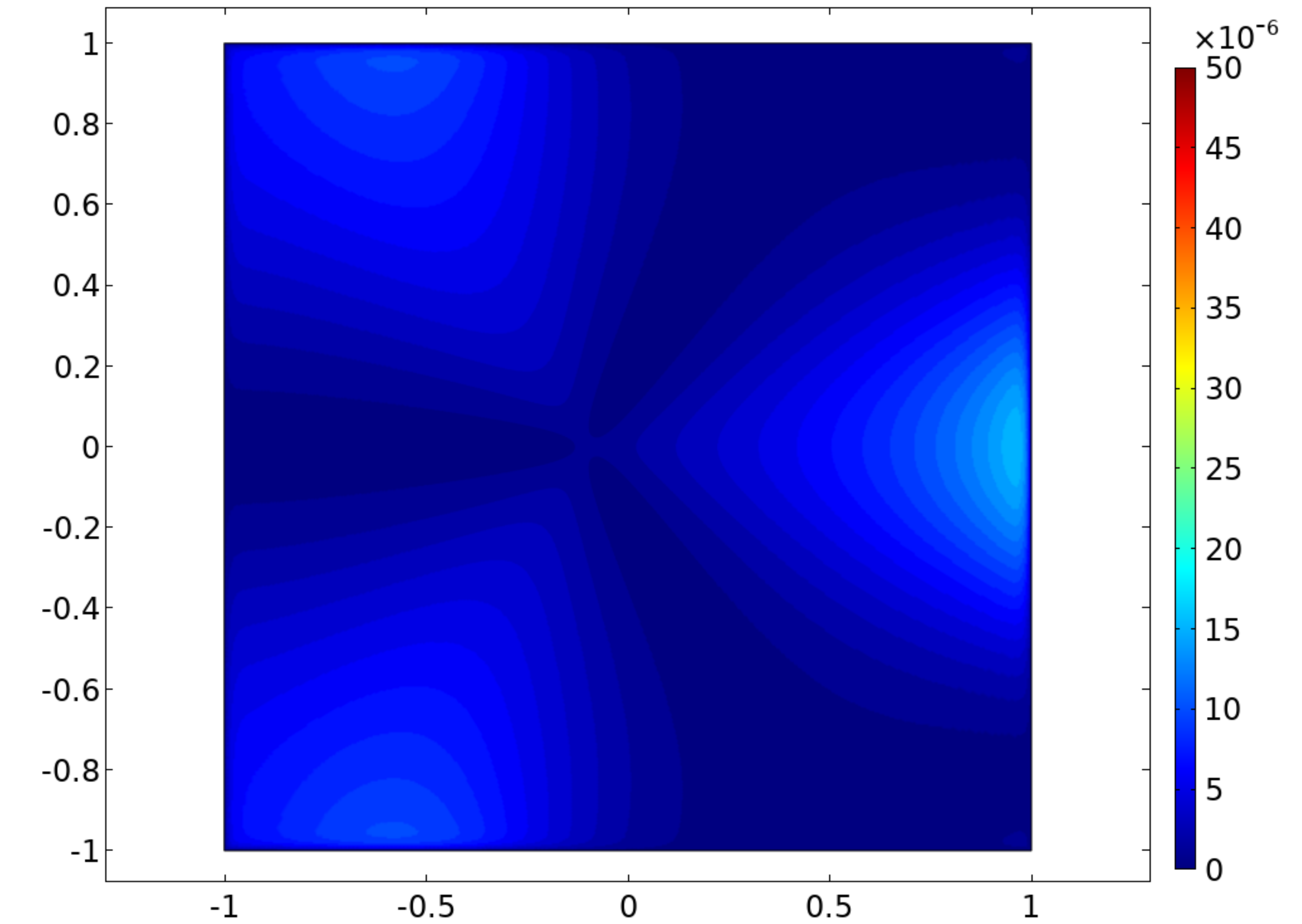}
        \includegraphics[scale=.95]{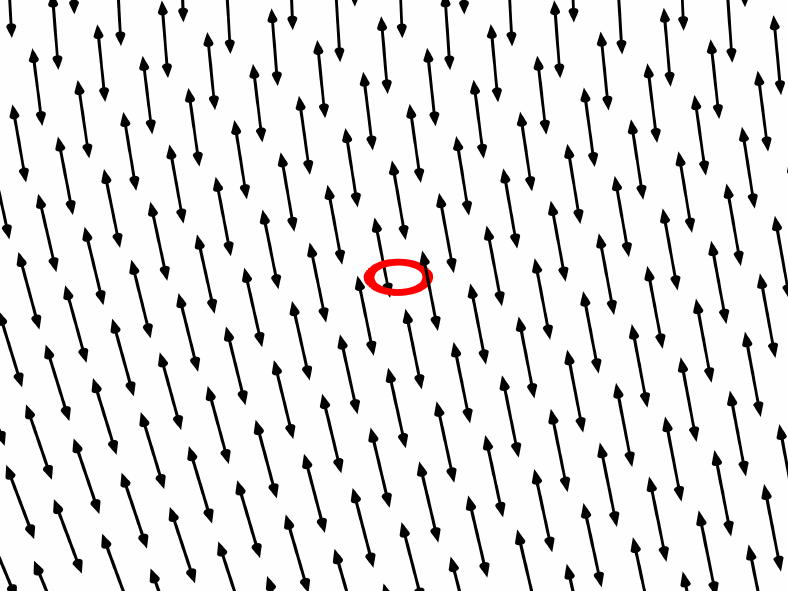}
    \caption{The eigenvalue $\lambda_3$ (left) and the eigenvector ${\mathbf e}_3$ (right) of the minimizer $u_\varepsilon$ of \eqref{LdGEnergy}. The vector field plot zooms in on the region near the singularity of $u$, represented by a red circle.}
    \label{fig:4}
\end{figure}
In Fig.~\ref{fig:10} we plot the distribution of the entire eigenframe of $u_\varepsilon$ in a vicinity of the singular point of $u$.
\begin{figure}[H]
    \centering
    \includegraphics[scale=1.2]{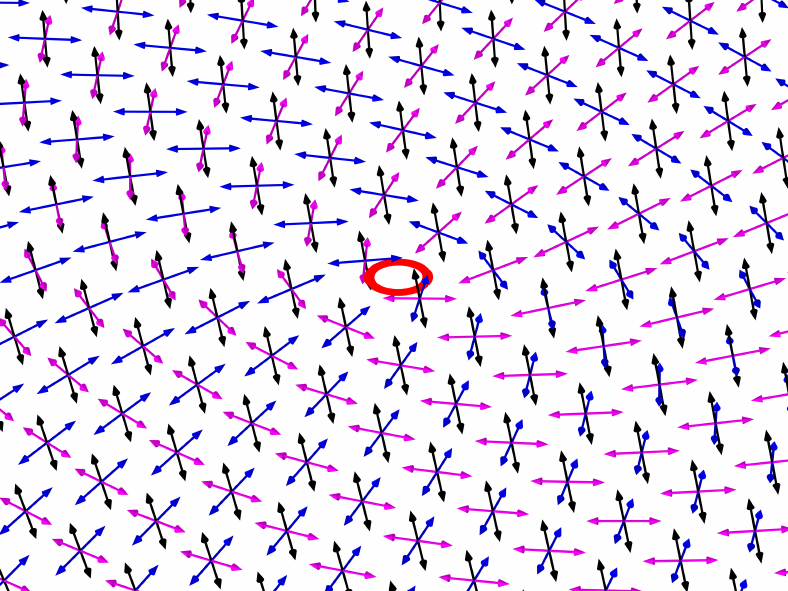}
    \caption{Eigenframe distribution of the minimizer $u_\varepsilon$ of \eqref{LdGEnergy}. The location of the singularity is marked by a red circle.}
    \label{fig:10}
\end{figure}
Figs.~\ref{fig:1}-\ref{fig:3} approximate the behavior of $\mu_u$ that appears in Theorem~\ref{current_vector} as it is computed using $u_\varepsilon$, rather than $u$. From the statement of Theorem~\ref{current_vector}, it follows that $|(z-a)\mu_u|\approx0.25$ near the singularity $a$ of $u$. Indeed, this is what is observed in Fig.~\ref{fig:1}, except that the approximation of $|(z-a)\mu_u|$ plunges to $0$ at $a$, because $\mu_u$ computed using $u_\varepsilon$ instead of $u$ is bounded at $a$. From Theorem~\ref{current_vector}, it also follows that $1/|\mu_u|$ should be linear in the radial coordinate centered at $a$ and this is confirmed by the plot in Fig.~\ref{fig:3}.
\begin{figure}[H]
    \centering
    \includegraphics[scale=.4]{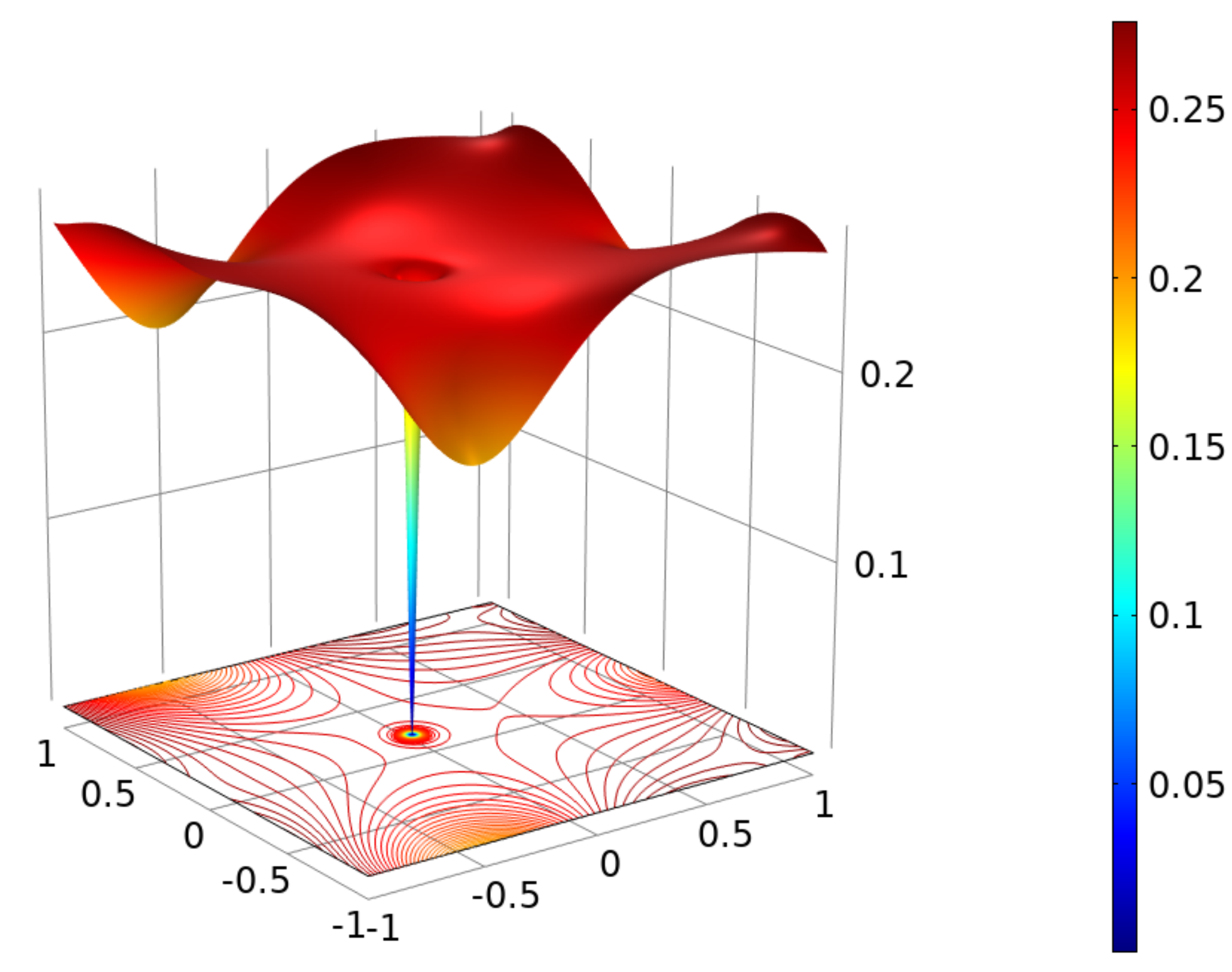}
    \caption{Plot of $|(z-a)\mu_u|$, where $a\in\Omega$ is the location of the singularity of $u$ and $\mu_u$ is as defined in Theorem~\ref{current_vector}.}
    \label{fig:1}
\end{figure}

\begin{figure}[H]
    \centering
    \includegraphics[scale=.4]{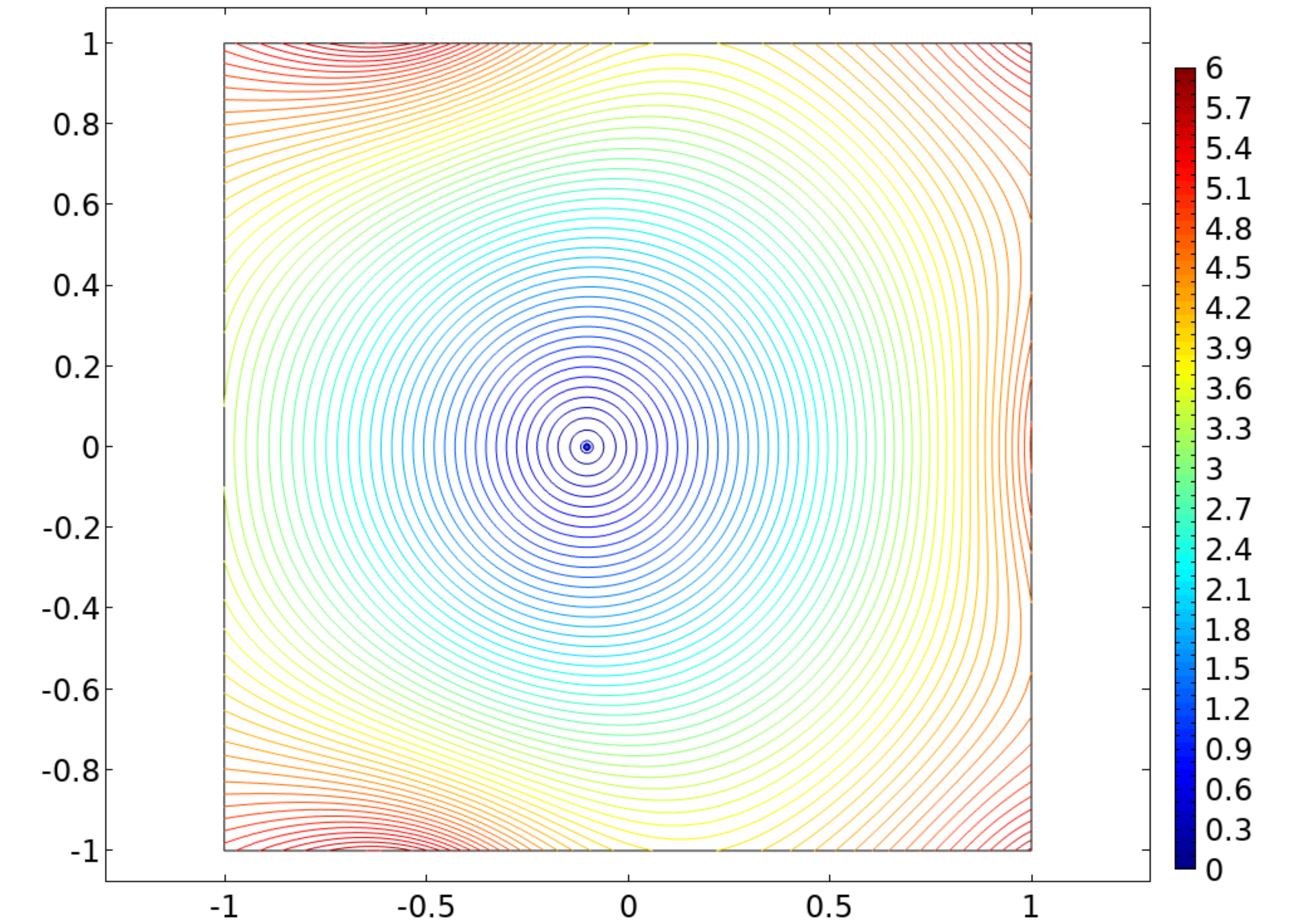}
    \caption{Plot of $1/|\mu_u|$, where $\mu_u$ is as defined in Theorem~\ref{current_vector}.}
    \label{fig:3}
\end{figure}

\section{Acknowledgements}
The first author was supported in part by NSF grant DMS-2106551.

\section{Appendix}

In this appendix we recall a result that was proved in \cite{BBH} for minimizers of the Ginzburg-Landau energy, and that remains valid in our situation.  Let us start by recalling that Cayley-Hamilton theorem for matrices $u\in M_{s, 1}^3(\R{})$ implies
$$
u^3 - u^2 + \frac{(1-\abs{u}^2)}{2}u - {\rm det}(u)I = 0.
$$
Furthermore
\begin{equation}\label{potential}
W_\beta(u) = \frac{1}{4}(1-\abs{u}^2)^2 - \beta \, {\rm det}(u),
\end{equation}
and that for $u \in M^3_{s, 1}(\R{})$, Cayley-Hamilton's theorem gives us
$$
W_\beta(u) = \frac{1}{4}(1-\abs{u}^2)^2 - \frac{\beta}{6}(1 - 3\abs{u}^2 + 2{\rm tr}(u^3)).
$$
From this last expression we obtain
$$
(\nabla W_\beta)(u) = (\abs{u}^2 -1) u + \beta(u-u^2),
$$
and
$$
(D^2_u W_\beta)(u)(h) = (\abs{u}^2-1)h + \beta(h-uh-hu),
$$
also for $M^3_{s, 1}(\R{})$.  Let us also recall that we assume here that $1 \leq \beta < 3$.  In this range we know that $\mathcal P$ is the set of global minimizers of $W_\beta$.  In particular, for any $v\in \mathcal P$ and any $h \in M^3_{s, 0}(\R{})$ we have
$$
\langle (D^2_v W_\beta)(v)(h), h \rangle \geq 0.
$$

\medskip
\medskip

A further consequence of Cayley-Hamilton, for $u \in M_{s, 1}^3(\R{})$, is the identity 
\begin{equation}\label{further_Cayley_Hamilton}
\abs{u-u^2}^2 + 2{\rm det}(u) = \frac{(1-\abs{u}^2)^2}{2}.
\end{equation}
In particular, if $u \in M_{s, 1}^3(\R{})$ has $\langle u, P\rangle \geq 0$ for all $P \in \mathcal P$, then
$$
\frac{(1-\abs{u}^2)^2}{3} \leq \abs{u-u^2}^2 \leq \frac{(1-\abs{u}^2)^2}{2}.
$$
Under these conditions, if ${\rm dist}(u, {\mathcal P}) \leq \delta \leq \frac{1}{4}$, it is not hard to check that
$$
\frac{2}{3}{\rm dist}(u, {\mathcal P}) \leq 2\frac{\abs{u-u^2}}{1-\delta} \leq \frac{1-\abs{u}^2}{1-\delta}.
$$
Finally, again for $u \in M_{s, 1}^3(\R{})$ such that $\langle u, P\rangle \geq 0$ for all $P \in \mathcal P$, a lengthy, but ultimately straight forward minimization shows that
$$
(1-\abs{u}^2)^2 \geq 12 \,\, {\rm det}(u).
$$
Hence, for $1\leq \beta < 3$, and $u \in M_{s, 1}^3(\R{})$ such that $\langle u, P\rangle \geq 0$ for all $P \in \mathcal P$, we have
\begin{align*}
W_\beta(u) &= \frac{1}{4}(1-\abs{u}^2)^2 - \beta {\rm det}(u) = \frac{3-\beta}{12} ( 1 - \abs{u}^2)^2 + \beta ( \frac{1}{12}(1-\abs{u}^2)^2 - {\rm det}(u) )\\  &\geq \frac{3-\beta}{12}(1-\abs{u}^2)^2 \geq \frac{(3-\beta)}{6}({\rm dist}(u, {\mathcal P}))^2.
\end{align*}

\begin{remark}
There is a small but confusing error in \cite{GM}.  There, the potential $W_{\beta}$ is written as
$$
W_\beta(u) = \frac{1}{2}(1-\abs{u}^2)^2 - \beta \, {\rm det}(u).
$$
As can be seen from \eqref{further_Cayley_Hamilton}, in order for this potential to be equal $\abs{u-u^2}^2$ we need to choose $\beta=2$.  Furthermore, in \cite{GM} we state that our results are valid for $2<\beta<6$; it should say $2\leq \beta< 6$.  In this paper, however, we use the expression given in \eqref{potential}.

\end{remark}

\begin{proposition}\label{prop_appendix}
Let $\Omega \subset \R{2}$ be a smooth, bounded, simply-connected open set, and $u_\eps : \Omega \to M_{s, 1}^3(\R{})$ be a minimizer of the LdG energy with non-contractible boundary data in $\mathcal{P}$.  Let $a\in \Omega$ be the distinguished point in $\Omega$ that \cite{GM} shows exist.  For $r>0$ such that $B_{2r}(a)\subset \Omega$, there is $\eps_0 > 0$ and a constant $C>0$ such that
$$
\abs{(\nabla u_\eps)(x)} + \frac{(1-\abs{u_\eps}^2)}{\eps^2} \leq C
$$
for all $x\in \Omega \setminus B_r(a)$, and all $0 < \eps \leq \eps_0$.
\end{proposition}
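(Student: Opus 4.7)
The approach is to adapt to the matrix-valued LdG setting the classical uniform gradient and potential bounds of Bethuel--Brezis--H\'elein (compare Theorem~III.1 and Lemma~A.2 in Chapter~2 of \cite{BBH}, and Theorem~VII.1 which is invoked in the proof of Proposition~\ref{Hopf_Diff_split}). The Euler--Lagrange system for $u_\eps$ reads
$$
-\Delta u_\eps + \frac{1}{\eps^2}(\nabla_u W_\beta)(u_\eps) = \lambda_\eps I_3,
$$
with the Lagrange multiplier $\lambda_\eps$ coming from the constraint $\mathrm{tr}(u_\eps)=1$. The three main inputs will be this equation, the coercivity and Hessian positivity of $W_\beta$ at $\mathcal{P}$ recorded above in the appendix, and the fact, extracted from Lemma~8 of \cite{GM} (and already used in Step~5 of the proof of Theorem~\ref{near_sing}), that $\int_{\Omega\setminus B_{r/2}(a)} W_\beta(u_\eps)/\eps^2 \to 0$.

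First I would establish a uniform $C^0$ estimate
$$
\sup_{\Omega \setminus B_r(a)} \mathrm{dist}(u_\eps, \mathcal{P}) \longrightarrow 0,
$$
by combining the appendix's coercivity $W_\beta \ge \tfrac{3-\beta}{6}\,\mathrm{dist}(\cdot,\mathcal{P})^2$ with a standard $\eta$-compactness / clearing-out argument for minimizers: in any ball of radius $\rho$ on which the local energy is below a universal threshold $\eta_0$, the distance $\mathrm{dist}(u_\eps,\mathcal{P})$ is uniformly small, and one covers $\Omega\setminus B_r(a)$ by finitely many such balls using the global energy bound. Taking the trace of the Euler--Lagrange equation, together with $\mathrm{tr}(u_\eps)=1$, expresses $\lambda_\eps$ through $\mathrm{tr}\bigl((\nabla_u W_\beta)(u_\eps)\bigr)/\eps^2$, which by the explicit formula for $\nabla_u W_\beta$ given in the appendix vanishes to order $\mathrm{dist}(u_\eps,\mathcal{P})^2/\eps^2$ near $\mathcal{P}$; the preceding $C^0$ step then gives a uniform bound on $\lambda_\eps$.

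Once $u_\eps$ is in a small neighborhood of $\mathcal{P}$ and $\lambda_\eps$ is controlled, the pointwise bound $(1-|u_\eps|^2)/\eps^2 \le C$ follows by taking the inner product of the Euler--Lagrange equation with $u_\eps$, yielding a scalar equation for $1-|u_\eps|^2$. The Hessian positivity of $W_\beta$ at $\mathcal{P}$ and the appendix's identity $|u-u^2|^2 + 2\det(u) = \tfrac{1}{2}(1-|u|^2)^2$ together give
$$
-\Delta(1-|u_\eps|^2) + \frac{c}{\eps^2}(1-|u_\eps|^2) \le C \quad \text{on } \Omega\setminus B_{r/2}(a),
$$
and barrier comparison with the constant $C\eps^2/c$ yields the claim, matching Lemma~A.2 of Chapter~2 of \cite{BBH}.

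Finally, the uniform gradient bound is obtained by a blow-up argument: assuming a sequence $x_n \in \Omega \setminus B_r(a)$ with $\eps_n|\nabla u_{\eps_n}(x_n)|\to \infty$, rescale $\tilde u_n(y) = u_{\eps_n}(x_n+\eps_n y)$. The rescaled maps satisfy an elliptic system with uniformly bounded right-hand side (thanks to the potential bound just proved), take values in a fixed $\delta$-neighborhood of $\mathcal{P}$, and have locally bounded energy, so after extracting a subsequence they converge in $C^{1,\alpha}_{loc}$ to a $\mathcal{P}$-valued entire harmonic map on $\R{2}$ of finite Dirichlet energy, which is necessarily constant --- contradicting the assumed lower bound on $|\nabla \tilde u_n(0)|$. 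The main obstacle, and the point where the matrix-valued structure genuinely enters, is verifying the $C^{1,\alpha}$ regularity uniform in $\eps$ for the constrained system with Lagrange multiplier $\lambda_\eps$; this is handled using the smoothness of the nearest-point projection $\Pi$ onto $\mathcal{P}$ in a uniform neighborhood, which makes the rescaled equation a small perturbation of the harmonic-map system into $\mathcal{P}$ and lets the BBH machinery transfer essentially verbatim.
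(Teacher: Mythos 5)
Your route is genuinely different from the paper's --- the paper does not use a blow-up argument for the gradient estimate, but a Bochner-type differential inequality handled by the Moser-iteration Steps A.4 and B.3 of \cite{BBH0}, and it establishes the gradient bound \emph{before} the potential bound --- and your reversed ordering introduces a circularity that does not close. Pairing the Euler--Lagrange system with $u_\eps$ gives, with $\zeta = 1-|u_\eps|^2$, a scalar equation of the form $-\Delta\zeta + \frac{c}{\eps^2}\zeta \le 2|\nabla u_\eps|^2$ on $\Omega\setminus B_r(a)$ (this is essentially the last display of the paper's proof); the right-hand side is \emph{not} a constant unless you already know $|\nabla u_\eps| \le C$, and a priori it is only $O(\eps^{-2})$, so barrier comparison yields nothing better than $\zeta\le C$. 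Your control of $\lambda_\eps$ is also circular: from the appendix formula $(\nabla_u W_\beta)(u) = (|u|^2-1)u + \beta(u-u^2)$ one computes $\mathrm{tr}\bigl((\nabla_u W_\beta)(u_\eps)\bigr) = (\beta-1)(1-|u_\eps|^2)$, which is \emph{linear}, not quadratic, in $1-|u_\eps|^2$, so $\lambda_\eps = \frac{\beta-1}{3\eps^2}(1-|u_\eps|^2)$ and bounding $\lambda_\eps$ \emph{is} the potential bound; the uniform $C^0$ closeness to $\mathcal{P}$ alone does not deliver it. Since your final gradient-bound step explicitly invokes ``the potential bound just proved,'' the argument runs in a loop.

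The paper sidesteps this by observing that once $\mathrm{dist}(u_\eps,\mathcal{P})$ is uniformly small, the Euler--Lagrange system forces $|\Delta u_\eps| \ge \delta\,(1-|u_\eps|^2)/\eps^2$; substituting this into the Bochner identity replaces the dangerous term $\eps^{-2}(1-|u_\eps|^2)|\nabla u_\eps|^2$ by $C|\Delta u_\eps||\nabla u_\eps|^2$, which can be absorbed, giving $\Delta|\nabla u_\eps|^2 \ge |D^2 u_\eps|^2 - C|\nabla u_\eps|^4$ with no explicit $\eps$-dependence; Steps A.4, B.3 of \cite{BBH0} then give $|\nabla u_\eps|\le C$, and only afterwards Steps A.5, B.4 deliver the potential bound. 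If you want to keep the blow-up route, first note that $\eps^2\lambda_\eps = \frac{\beta-1}{3}(1-|u_\eps|^2)$ is trivially bounded from $|u_\eps|\le 1$ (a fact from \cite{GM}), so the rescaled system has bounded data without the potential bound; do the blow-up gradient estimate first, the potential bound second. Even then, since $\pi_2(\mathcal{P})\cong\pi_2(\RP{2})$ is nontrivial, nonconstant finite-energy harmonic maps $\R{2}\to\mathcal{P}$ exist, so your Liouville step must verify that the blow-up limit has Dirichlet energy below the quantization threshold --- this is available here from the shrinking blow-up scale and the strong $W^{1,2}$ convergence of $u_{\eps_n}$ away from $a$, but needs to be said explicitly.
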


\begin{proof}
The proof follows \cite{BBH}.  Let us observe that the end of the proof of Lemma 8 of \cite{GM} shows that minimizers $u_\eps$ satisfy
$$
\limsup_{\eps\to 0} \int_{\Omega\setminus B_r(a)} \frac{W(u_\eps)}{\eps^2} = 0.
$$
We now appeal to Steps A.2 and B.2 of the proof  of Theorem 1 of \cite{BBH0}, to conclude that $W(u_\eps) \to 0$ uniformly in $\Omega \setminus B_r(a)$.  In particular, for $\delta > 0$ we can choose $\eps_0 > 0$ such that
$$
0 \leq 1-\abs{u_\eps^2}(x) \leq \delta
$$
for all $x\in \Omega \setminus B_r(a)$ and all $0 < \eps \leq \eps_0$.
\medskip
\medskip

We next recall from the appendix of \cite{GM} that
$$
\frac{4-\beta}{\eps^2}(1-\abs{u}^2) - 4W_{\frac{3\beta}{4}}  = -\Delta \frac{\abs{u}^2}{2} + \abs{\nabla u}^2 = -\langle u , \Delta u\rangle.
$$
We know from \cite{GM} that $\langle u_\eps , P \rangle \geq 0$ for all $P \in \mathcal P$, so we deduce
$$
4W_{\frac{3\beta}{4}}(u_\eps) \leq (1-\abs{u}^2)^2.
$$
Hence, we can choose $\eps_0 > 0$ small enough for
$$
\frac{4-\beta}{\eps^2}(1-\abs{u_\eps}^2) - 4W_{\frac{3\beta}{4}}(u_\eps) \geq \delta (1-\abs{u_\eps}^2)
$$
in $\Omega \setminus B_r(a)$, for all $0 <\eps \leq \eps_0$.  Since $\abs{u_\eps}\leq 1$, we conclude that
$$
\abs{\Delta u_\eps}\geq \delta (1-\abs{u_\eps}^2)
$$
in $\Omega \setminus B_r(a)$, for all $0 <\eps \leq \eps_0$.

From the Euler-Lagrange equation for $u_\eps$, we obtain
$$
-\Delta \frac{\partial u_\eps}{\partial x_k} + \frac{1}{\eps^2} (D^2_u W)(u)(\frac{\partial u_\eps}{\partial x_k}) = \frac{\partial \lambda_\eps}{\partial x_k}I,
$$
and then
$$
\Delta \abs{\nabla u_\eps}^2 = 2\abs{D^2_x u}^2 + \frac{2}{\eps^2}\sum_{k=1}^2 \langle (D^2_u W)(u)(\frac{\partial u_\eps}{\partial x_k}), \frac{\partial u_\eps}{\partial x_k} \rangle.
$$
Now, writing $v_\eps$ for the nearest element of $\mathcal P$ to $u_\eps$, we have
\begin{align*}
\langle (D^2_u W_\beta)(u_\eps)(\frac{\partial u_\eps}{\partial x_k}), \frac{\partial u_\eps}{\partial x_k} \rangle &= \langle (D^2_u W)(v_\eps)(\frac{\partial u_\eps}{\partial x_k}), \frac{\partial u_\eps}{\partial x_k} \rangle \\ &+ \langle (D^2_u W_\beta)(u_\eps)(\frac{\partial u_\eps}{\partial x_k}) - (D^2_u W_\beta)(v_\eps)(\frac{\partial u_\eps}{\partial x_k}), \frac{\partial u_\eps}{\partial x_k} \rangle.
\end{align*}
Now $v_\eps \in \mathcal P$, which is the set of minimizers of $W_\beta$, and it is easy to check that ${\rm tr}(\frac{\partial u_\eps}{\partial x_k}) = 0$.  Hence
$$
\langle (D^2_u W)(v_\eps)(\frac{\partial u_\eps}{\partial x_k}), \frac{\partial u_\eps}{\partial x_k} \rangle \geq 0.
$$
We deduce that
$$
\langle (D^2_u W_\beta)(u_\eps)(\frac{\partial u_\eps}{\partial x_k}), \frac{\partial u_\eps}{\partial x_k} \rangle \geq -C\abs{u_\eps - v_\eps} \abs{\frac{\partial u_\eps}{\partial x_k}}^2 \geq -C(1-\abs{u_\eps}^2)\abs{\frac{\partial u_\eps}{\partial x_k}}^2, 
$$
where the last inequality holds because from the comments before the proposition we have
$$
\abs{u_\eps-v_\eps} = {\rm dist}(u_\eps, {\mathcal P}) \leq C(1-\abs{u}^2).
$$
We conclude that
$$
\Delta \abs{\nabla u_\eps}^2 \geq 2\abs{D^2_x u}^2 - C\frac{(1-\abs{u_\eps}^2)}{\eps^2}\abs{\nabla u}^2 \geq 2\abs{D^2_x u}^2 - C\abs{\Delta u_\eps}\abs{\nabla u}^2.
$$
Since this implies
$$
\Delta \abs{\nabla u_\eps}^2 \geq \abs{D^2_x u_\eps}^2 - C\abs{\nabla u}^4
$$
in $\Omega\setminus B_r(a)$, we can apply Steps A.4 and B.3 of the proof of Theorem 1 of \cite{BBH0} to conclude that 
$$
\abs{\nabla u_\eps}\leq C
$$
in $\Omega \setminus B_r(a)$, for some constant independent of $\eps \in ]0, \eps_0]$.

\medskip
\medskip

Finally, we recall from \cite{GM} that
$$
\Delta \frac{\abs{u_\eps}^2}{2} = \frac{4}{\eps^2} W_{\frac{3\beta}{4}}(u_\eps) -\frac{4-\beta}{\eps^2}(1-\abs{u_\eps}^2) + \abs{\nabla u_\eps}^2.
$$
From here, $\zeta = 1-\abs{u_\eps}^2$ satisfies
$$
-\Delta \zeta + \frac{4-\beta}{\eps^2}\zeta = \abs{\nabla u_\eps}^2.
$$
Steps A.5 and B.4 of Theorem 1 of \cite{BBH0} give us the last conclusion of the proposition.

\end{proof}

\bibliographystyle{plain}
\bibliography{references}

\end{document}